\DeclareMathAlphabet{\mathpzc}{OT1}{pzc}{m}{it}
\theoremstyle{plain}
\newtheorem{theorem}{Theorem}[section]
\newtheorem{lemma}[theorem]{Lemma}
\newtheorem{proposition}[theorem]{Proposition}
\newtheorem{corollary}[theorem]{Corollary}
\theoremstyle{definition}
\newtheorem{definition}[theorem]{Definition}
\newtheorem{examples}[theorem]{Examples}
\newtheorem{example}[theorem]{Example}
\theoremstyle{remark}
\newtheorem{remark}[theorem]{Remark}
\newtheorem{remarks}[theorem]{Remarks}
\newenvironment{eqcond}{\begin{enumerate}}{\end{enumerate}}
\renewcommand{\theenumii}{\alph{enumii}}
\newcommand{\Rw}{\Rightarrow}
\newcommand{\hrw}{\hookrightarrow}
\newcommand{\fra}{\mathfrak{a}}
\newcommand{\frf}{\mathfrak{f}}
\newcommand{\frg}{\mathfrak{g}}
\newcommand{\frp}{\mathfrak{p}}
\newcommand{\frq}{\mathfrak{q}}
\newcommand{\frw}{\mathfrak{w}}
\newcommand{\frv}{\mathfrak{v}}
\newcommand{\frx}{\mathfrak{x}}
\newcommand{\fry}{\mathfrak{y}}
\newcommand{\frz}{\mathfrak{z}}
\newcommand{\calA}{\mathcal{A}}
\newcommand{\calB}{\mathcal{B}}
\newcommand{\frW}{\mathfrak{W}}
\newcommand{\frX}{\mathfrak{X}}
\DeclareMathOperator{\ev}{ev}
\DeclareMathOperator{\Id}{Id}
\DeclareMathOperator{\cl}{cl}
\DeclareMathOperator{\ForgetSET}{O}
\DeclareMathOperator{\ForgetToV}{S}
\DeclareMathOperator{\ForgetToVAd}{A}
\DeclareMathOperator{\MFunctor}{M}
\DeclareMathOperator{\can}{can}
\DeclareMathOperator{\colim}{colim}
\DeclareMathOperator{\yoneda}{\mathpzc{y}}
\DeclareMathOperator{\Sup}{Sup}
\DeclareMathOperator{\upc}{\uparrow\!}
\newcommand{\mate}[1]{\,^\ulcorner\! #1^\urcorner}
\newcommand{\fspstr}[2]{\llbracket #1,#2\rrbracket}
\newcommand{\catfont}[1]{\mathsf{#1}}
\newcommand{\V}{\catfont{V}}
\newcommand{\two}{\catfont{2}}
\newcommand{\quantale}{(\V,\otimes,k)}
\newcommand{\Pplus}{\catfont{P}_{\!\!{_+}}}
\newcommand{\SET}{\catfont{Set}}
\newcommand{\TOP}{\catfont{Top}}
\newcommand{\AP}{\catfont{App}}
\newcommand{\ORD}{\catfont{Ord}}
\newcommand{\Mat}[1]{#1\text{-}\catfont{Rel}}
\newcommand{\Mod}[1]{#1\text{-}\catfont{Mod}}
\newcommand{\UMat}[1]{#1\text{-}\catfont{URel}}
\newcommand{\Cat}[1]{#1\text{-}\catfont{Cat}}
\newcommand{\Gph}[1]{#1\text{-}\catfont{Gph}}
\newcommand{\CatSep}[1]{#1\text{-}\catfont{Cat}_\mathrm{sep}}
\newcommand{\CatCompl}[1]{#1\text{-}\catfont{Cat}_\mathrm{cpl}}
\newcommand{\Cocompl}[1]{#1\text{-}\catfont{Cocont}}
\newcommand{\CocomplSep}[1]{#1\text{-}\catfont{Cocont}_\mathrm{sep}}
\newcommand{\ICocompl}[1]{#1\text{-}\catfont{ICocont}}
\newcommand{\ICocomplSep}[1]{#1\text{-}\catfont{ICocont}_\mathrm{sep}}
\renewcommand{\to}{\longrightarrow}
\renewcommand{\mapsto}{\longmapsto}
\newcommand{\relto}{{\longrightarrow\hspace*{-2.8ex}{\mapstochar}\hspace*{2.6ex}}}
\newcommand{\modto}{{\longrightarrow\hspace*{-2.8ex}{\circ}\hspace*{1.2ex}}}
\newcommand{\kto}{\relbar\joinrel\rightharpoonup}
\newcommand{\krelto}{\,{\kto\hspace*{-2.5ex}{\mapstochar}\hspace*{2.6ex}}}
\newcommand{\kmodto}{\,{\kto\hspace*{-2.8ex}{\circ}\hspace*{1.3ex}}}
\newcommand{\kleisli}{\circ}
\newcommand{\homkleisliright}{\multimapinv}
\newcommand{\homcompleft}{\multimapdot}
\newcommand{\homcompright}{\multimapdotinv}
\newcommand{\Txi}{T_{\!\!_\xi}}
\newcommand{\monadfont}[1]{\mathbbm{#1}}
\newcommand{\mT}{\monadfont{T}}
\newcommand{\mI}{\monadfont{1}}
\newcommand{\mU}{\monadfont{U}}
\newcommand{\mL}{\monadfont{L}}
\newcommand{\mInj}{\monadfont{I}}
\newcommand{\mInjP}{\monadfont{I}^+}
\newcommand{\mS}{\monadfont{S}}
\newcommand{\monad}{(T,e,m)}
\newcommand{\imonad}{(\Id,1,1)}
\newcommand{\umonad}{(U,e,m)}
\newcommand{\wmonad}{(L,e,m)}
\newcommand{\theoryfont}[1]{\mathscr{#1}}
\newcommand{\Tth}{\theoryfont{T}}
\newcommand{\Ith}{\theoryfont{I}}
\newcommand{\Uth}{\theoryfont{U}}
\newcommand{\toptheory}{(\mT,\V,\xi)}
\newcommand{\itheory}{(\mI,\V,1_{\V})}
\newcommand{\sptheory}{(\mU,\two,\xi_{\two})}
\newcommand{\BC}{(BC)}
\newcommand{\doo}[1]{\overset{\centerdot}{#1}}
\newcommand{\op}{\mathrm{op}}
\begin{document}

\title{Injective Spaces via Adjunction}

\author{Dirk Hofmann}
\thanks{The author acknowledges partial financial assistance by Unidade de Investiga\c{c}\~{a}o e Desenvolvimento Matem\'{a}tica e Aplica\c{c}\~{o}es da Universidade de Aveiro/FCT}
\address{Departamento de Matem\'{a}tica\\ Universidade de Aveiro\\3810-193 Aveiro\\ Portugal}
\email{dirk@ua.pt}

\subjclass[2000]{18A05, 18D15, 18D20, 18B35, 18C15, 54B30, 54A20}
\keywords{Quantale, $\V$-category, monad, topological theory, module, Yoneda lemma, weighted colimit}

\begin{abstract}
Our work over the past years shows that not only the collection of (for instance) all topological spaces gives rise to a category, but also each topological space can be seen individually as a category by interpreting the convergence relation $\mathfrak{x}\to x$ between ultrafilters and points of a topological space $X$ as arrows in $X$. Naturally, this point of view opens the door to the use of concepts and ideas from (enriched) Category Theory for the investigation of (for instance) topological spaces. In this paper we study cocompleteness, adjoint functors and Kan extensions in the context of topological theories. We show that the cocomplete spaces are precisely the injective spaces, and they are algebras for a suitable monad on $\SET$. This way we obtain enriched versions of known results about injective topological spaces and continuous lattices.
\end{abstract}

\maketitle

\section*{Introduction}

The title of the present article is clearly reminiscent of the chapter \emph{Ordered sets via adjunctions} by R.\ Wood \cite{Woo_OrdAdj}, where the theory of ordered sets is developed elegantly  employing consequently the concept of adjunction. One of the fundamental aspects of our recent research is described by the slogan \emph{topological spaces are categories}, and therefore can be studied using notions and techniques from (enriched) Category Theory. We hope to be able to show in this paper that concepts like module, colimit and adjointness can be a very useful tool for the study of topological spaces too.

We should explain what is meant by ``spaces are categories''. In his famous 1973 paper \cite{Law_MetLogClo} F.W.\ Lawvere considers the points of a (generalised) metric space $X$ as the objects of a category $X$ and lets the distance
\[
d(x,y)\in[0,\infty]
\]
play the role of the hom-set of $x$ and $y$. In fact, the basic laws
\begin{align*}
0\ge d(x,x) &&\text{and} && d(x,y)+d(y,z)\ge d(x,z)
\end{align*}
remind us immediately to the operations ``choosing the identity'' and ``composition''
\begin{align*}
1\to\hom(x,x) &&\text{and} && \hom(x,y)\times\hom(y,z)\to\hom(x,z)
\end{align*}
of a category. Motivated by Lawvere's approach, we consider the points of a topological space $X$ as the objects of our category, and interprete the convergence $\mathfrak{x}\to x$ of an ultrafilter $\frx$ on $X$ to a point $x\in X$ as a morphism in $X$. With this interpretation, the convergence relation
\begin{equation}\label{homfunct}
 \to:UX\times X\to\two
\end{equation}
becomes the ``hom-functor'' of $X$. Clearly, we have to make here the concession that a morphism in $X$ does not have just an object but rather an ultrafilter (of objects) as domain. This intuition is supported by the observation (due to M.\ Barr \cite{Bar_RelAlg}) that a relation $\mathfrak{x}\to x$ between ultrafilters and points of a set $X$ is the convergence relation of a (unique) topology on $X$ if and only if
\begin{align}\label{TCatAx}
e_X(x)\to x &&\text{and}&& (\mathfrak{X}\to\mathfrak{x}\;\&\;\mathfrak{x}\to x)\,\models\, m_X(\mathfrak{X})\to x,
\end{align}
for all $x\in X$, $\mathfrak{x}\in UX$ and $\mathfrak{X}\in UUX$, where $m_X(\mathfrak{X})$ is the filtered sum of the filters in $\mathfrak{X}$ and $e_X(x)=\doo{x}$ the principal ultrafilter generated by $x\in X$. In the second axiom we use the natural extension of a relation between ultrafilters and points to a relation between ultrafilters of ultrafilters and ultrafilters, so that $\mathfrak{X}\to\mathfrak{x}$ is a meaningful expression. In our interpretation, the first condition postulates the existence of an ``identity arrow'' on $X$, whereby the second one requires the existence of a ``composite'' of ``composable pairs of arrows''. Furthermore, a function $f:X\to Y$ between topological spaces is continuous whenever $\frx\to x$ in $X$ implies $f(\frx)\to f(x)$ in $Y$, that is, $f$ associates to each object in $X$ an object in $Y$ and to each arrow in $X$ an arrow in $Y$ between the corresponding (ultrafilter of) objects in $Y$. It is now a little step to admit that the hom-functor \eqref{homfunct} of such a category $X$ takes values in a quantale $\V$ other than the two-element Boolean algebra $\two$, and that the domain $\frx$ of an arrow $\frx\to x$ in $X$ is an element of a set $TX$ other than the set $UX$ of all ultrafilters of $X$. As one can see immediately, we need $T$ to be a functor $T:\SET\to\SET$ in order to define the notion of functor between such categories, moreover, we need $T$ to be part of a $\SET$-monad $\mT=\monad$ in order to formulate the axioms \eqref{TCatAx} of a category in this context. Eventually, we reach the notion of a $(\mT,\V)$-category (also called $(\mT,\V)$-algebra or lax algebras), for a $\SET$-monad $\mT$ and quantale $\V$, as introduced in \cite{CH_TopFeat,CT_MultiCat,CHT_OneSetting}. A different but related approach to this kind of categories was presented by Burroni \cite{Bur_TCat}. 

Though the initial paper \cite{CH_TopFeat} focused on the topological features of this approach, already in \cite{CT_MultiCat} the emphasis was put on the categorical description of $(\mT,\V)$-algebras. The theory of categories enriched in a monoidal closed category $\V$ is by now classical \cite{Ben:63,Ben:65,EK_CloCat,Kel_EnrCat,Law_MetLogClo}. We have a wide range of concepts and theorems at our disposal, it includes such things as modules (also called distributors, profunctors), weighted (co)limits, the Yoneda Lemma, Kan extensions, adjoint functors, and many more. Naturally, we wish to lift these notions and results to the $(\mT,\V)$-setting. A first step in this direction was done in \cite{CH_Compl}, where the notion of module is introduced into the realm of (now called) $(\mT,\V)$-categories. As in the case of $\V$-categories, this concept is fundamental for the further development of the theory; for instance, completeness properties of $(\mT,\V)$-categories are formulated in terms of modules. In fact, in \cite{CH_Compl} the categorical notion of Cauchy-completeness (the name Lawvere-completeness respectively L-completeness is proposed in \cite{CH_Compl,HT_LCls}) is introduced and studied. A further achievement of \cite{CH_Compl} is the formulation and proof of a $(\mT,\V)$-version of the famous Yoneda lemma, a result which turns out to be crucial for the study of $(\mT,\V)$-categories in the same way as the classical result is for the development of the theory of $\V$-categories. This can be judged by looking at the results and proofs of the subsequent paper \cite{HT_LCls} and also the present one. However, in order to proceed with our ``spaces as categories'' project, further conditions on the monad $\mT$ and the quantale $\V$ are needed. As a result of our work on this subject emerged the notion of a \emph{topological theory} $\Tth=\toptheory$ introduced in \cite{Hof_TopTh}, where we add a map $\xi:T\V\to\V$ compatible with the monad and the quantale structure to our setting. Our experience shows so far that this concept is broad enough to include our principal examples, and at the same time restrictive enough to allow us to introduce categorical ideas into the realm of $(\mT,\V)$-categories (which we now call $\Tth$-categories).

The particular topic of this paper is the study of weighted colimits, cocomplete $\Tth$-categories and adjoint $\Tth$-functors. We start by recalling the definition of the principal players, namely $\Tth$-categories, $\Tth$-functors and $\Tth$-modules, and then proceed introducing adjoint $\Tth$-functors and weighted colimits for $\Tth$-categories precisely as for $\V$-categories. Furthermore, we show that the development of many basic properties does not go much beyond the $\V$-category case, as soon as we have $\Tth$-substitutes for dual category, presheaf-construction and the Yoneda lemma available. Finding useful equivalents to these notions and results we see as one of the main challenges here, fortunately, most of these problems are already solved in \cite{CH_Compl}. However, in this paper we give a different approach to the Yoneda lemma, by proving a more general result (Theorem \ref{Yoneda}) more suitable for our purpose. Moreover, our proof does not need anymore the restrictive condition $T1=1$. The achievements of this paper can then be summarised as follows. We characterise cocomplete $\Tth$-categories as precisely the injective ones with respect to fully faithful $\Tth$-functors, and as those $\Tth$-categories $X$ for which the Yoneda functor $\yoneda_X:X\to\hat{X}$ into the presheaf $\Tth$-category $\hat{X}$ has a left adjoint. We deduce cocompleteness of the presheaf $\Tth$-category $\hat{X}$, and show the existence of Kan-extensions in our setting, that is, any $\Tth$-functor $f:X\to Y$ into a cocomplete $\Tth$-category has an (up to equivalence) unique extension to a left adjoint $\Tth$-functor $f_L:\hat{X}\to Y$. As a consequence, we see that the category $\CocomplSep{\Tth}$ of separated and cocomplete (=injective) $\Tth$-categories and left adjoint $\Tth$-functors is a reflective subcategory of $\Cat{\Tth}$ (and of $\CatSep{\Tth}$), the category of (separated) $\Tth$-categories and $\Tth$-functors. Furthermore, we show that the induced monad on $\CatSep{\Tth}$ is of Kock-Z\"oberlein type and the inclusion functor is even monadic. We also prove that the forgetful functors from $\CocomplSep{\Tth}$ to $\SET$ and to $\CatSep{\V}$ are monadic. At this point we notice that our categorical approach has led us to a well-known result for topological spaces: injective T$_0$-spaces (together with suitable morphisms) are the Eilenberg--Moore algebras for the ``filter on open subsets'' monad on $\TOP_0$, the category of T$_0$-spaces and continuous maps, as well as for the filter monad on $\SET$ (see \cite{Day_Filter,Esc_InjSp} for details). We have now generalised these facts to $\Tth$-categories, but to do so we used (almost) only standard arguments from Category Theory! 

Finally, we wish to highlight a possible application of our work. One of the nice features of domain theory is the strong interaction between topological and order-theoretic ideas. For instance, continuous lattices \cite{Sco_ContLat} can be described purely in order theoretic terms as well as in topological terms: as ordered sets with certain completeness properties, or as injective topological T$_0$-spaces with respect to embeddings. There exist many interesting attempts in the literature to introduce \emph{continuous metric spaces}, or, more general, \emph{continuous $\V$-categories}; all of them are (more or less) based on the order-theoretic approach to continuous lattices (\cite{Wag_PhD,BBR_GenMet,Was_PhD}). We are not aware of any attempt using injectivity properties in a suitable category. The results of our work indicate that, for instance, R.\ Lowen's approach spaces (\cite{Low_ApBook}) can serve as a useful tool for the introduction and study of continuous metric spaces. In fact, as a particular instance of our work we deduce that the injective T$_0$-approach spaces can be described as the cocomplete T$_0$-approach spaces, but also as the Eilenberg--Moore algebras for suitable monads on sets respectively metric spaces. Looking at it from the other end, we obtain a metric equivalent to the filter monad, whose algebras are precisely the injective T$_0$-approach spaces.

\section{The Setting}

\subsection{Topological theories}
Throughout this paper we consider a (strict) \emph{topological theory} as introduced in \cite{Hof_TopTh}. Such a theory $\Tth=\toptheory$ consists of a commutative quantale $\V=\quantale$, a $\SET$-monad $\mT=\monad$ where $T$ and $m$ satisfy \BC\ (that is, $T$ sends pullbacks to weak pullbacks and each naturality square of $m$ is a weak pullback) and a map $\xi:T\V\to\V$ such that
\begin{enumerate}
\item the monoid $\V$ in $\SET$ lifts to a monoid $(\V,\xi)$ in $(\SET^\mT,\times,1)$, that is, $\xi:T\V\to\V$ is a $\mT$-algebra structure on $\V$ and $\otimes:\V\times\V\to\V$ and $k:1\to\V$ are $\mT$-algebra homomorphisms. In orther words, we require the following diagrams to commute.
\begin{align*}
\xymatrix{X\ar[r]^{e_X}\ar[dr]_{1_X} & TX\ar[d]^\xi\\ & X} &&& \hspace{3em}
\xymatrix{TTX\ar[d]_{m_X}\ar[r]^{T\xi} & TX\ar[d]^\xi\\ TX\ar[r]_\xi & X}\\
\xymatrix{T1\ar[d]_{!}\ar[r]^{Tk} & T\V\ar[d]^\xi\\ 1\ar[r]_k & \V} &&&
\xymatrix{T(\V\times\V)\ar[rr]^-{T(\otimes)}\ar[d]_{\langle\xi\cdot T\pi_1,\xi\cdot T\pi_2\rangle} && T\V\ar[d]^\xi\\ \V\times\V\ar[rr]_-{\otimes} && \V}
\end{align*}
\item $\xi_X:=\xi\cdot T(-)$ defines a natural transformation $(\xi_{_X})_{X}:P_{\V}\to P_{\V} T:\SET\to\ORD$.
\end{enumerate}
Here $P_{\V}:\SET\to\ORD$ is the $\V$-powerset functor defined as follows. We put $P_\V(X)=\V^X$ with the pointwise order. For a function $f:X\to Y$, we have a monotone map $\V^f:\V^Y\to\V^X,\,\varphi\mapsto\varphi\cdot f$. It is easy to see that $\V^f$ preserves all infima and all suprema, hence has in particular a left adjoint denoted as $P_\V(f)$. Explicitly, for $\varphi\in\V^X$ we have $P_\V(f)(\varphi)(y)=\bigvee\{\varphi(x)\mid x\in X,\,f(x)=y\}$.
\begin{examples}\label{ExTheories}
\begin{enumerate}
\item The identity theory $\Ith=\itheory$, for each quantale $\V$, where $\mI=\imonad$ denotes the identity monad.
\item\label{ExTop} $\Uth_{\two}=\sptheory$, where $\mU=\umonad$ denotes the ultrafilter monad and $\xi_{\two}$ is essentially the identity map.
\item\label{ExApp} $\Uth_{\Pplus}=(\mU,\Pplus,\xi_{\Pplus})$ where $\Pplus=([0,\infty]^\op,+,0)$ and
\[
\xi_{\Pplus}:U\Pplus\to\Pplus,\;\;\frx\mapsto\inf\{v\in\Pplus\mid[0,v]\in\frx\}.
\]
\item\label{WordTh} The word theory $(\mL,\V,\xi_{_\otimes})$, for each quantale $\V$, where $\mL=\wmonad$ is the word monad and
\begin{align*}
\xi_{_\otimes}:L\V &\to\V.\\
(v_1,\ldots,v_n) &\mapsto v_1\otimes\ldots\otimes v_n\\
() &\mapsto k
\end{align*}
\end{enumerate}
\end{examples}

\subsection{$\V$-relations}
The quantaloid $\Mat{\V}$ \cite{BCRW_VarEnr} has sets as objects, and an arrow $r:X\relto Y$ from $X$ to $Y$ is a \emph{$\V$-relation} $r:X\times Y\to\V$. Composition of $\V$-relations $r:X\relto Y$ and $s:Y\relto Z$ is defined as matrix multiplication
\[s\cdot r(x,z)=\bigvee_{y\in Y}r(x,y)\otimes s(y,z),\]
and the identity arrow $1_X:X\relto X$ is the $\V$-relation which sends all diagonal elements $(x,x)$ to $k$ and all other elements to the bottom element $\bot$ of $\V$. The complete order of $\V$ induces a complete order on $\Mat{\V}(X,Y)=\V^{X\times Y}$: for $\V$-relations $r,r':X\relto Y$ we define  
\[
r\le r'\;:\iff\; \forall x\in X\;\forall y\in Y\;.\;r(x,y)\le r'(x,y).
\]

Any element $u\in\V$ can be interpreted as a $\V$-relation $u:1\relto 1$. Then, given also $v\in\V$, $v\cdot u=v\otimes u$, and $k$ represents the identity arrow. We have an involution $(r:X\relto Y)\mapsto (r^\circ:Y\relto X)$ where $r^\circ(y,x)=r(x,y)$, satisfying
\begin{align*}
1_X^\circ=1_X, && (s\cdot r)^\circ=r^\circ\cdot s^\circ, && {r^\circ}^\circ=r,
\end{align*}
as well as $r^\circ\le s^\circ$ whenever $r\le s$. Furthermore, there is an obvious functor
\[\SET\to\Mat{\V},\;(f:X\to Y)\mapsto (f:X\relto Y)\]
sending a map $f:X\to Y$ to its graph $f:X\relto Y$ defined by
\[
f(x,y)=
\begin{cases}
k & \text{if $f(x)=y$,}\\
\bot & \text{else.}
\end{cases}
\]
Then, in the quantaloid $\Mat{\V}$, we have $f\dashv f^\circ$. If the quantale $\V$ is non-trivial, i.e.\ if $\bot<k$, then the functor above from $\SET$ to $\Mat{\V}$ is faithful and we can identify the function $f:X\to Y$ with the $\V$-relation $f:X\relto Y$. \emph{In the sequel we will always assume $\bot<k$}, and write $f:X\to Y$ for both the function and the $\V$-relation.

Let $t:X\relto Z$ be a $\V$-relation. The composition functions
\begin{align*}
-\cdot t:\Mat{\V}(Z,Y)\to\Mat{\V}(X,Y) &&\text{and}&&
t\cdot-:\Mat{\V}(Y,X)\to\Mat{\V}(Y,Z).
\end{align*}
preserve suprema and therefore have respective right adjoints
\begin{align*}
(-)\homcompright t:\Mat{\V}(X,Y)\to\Mat{\V}(Z,Y) &&\text{and}&&
t\homcompleft(-):\Mat{\V}(Y,Z)\to\Mat{\V}(Y,X).
\end{align*}
Hence, for $\V$-relations $s:Z\relto Y$, $r:X\relto Y$ respectively $s:Y\to X$, $r:Y\relto Z$, we have bijections
\begin{align*}
&\underline{\;s\cdot t\le r\;} &&\text{and}&& \underline{\;t\cdot s \le r\;\;}.\\
&s\le r\homcompright t &&&& s\le t\homcompleft r\\
&\xymatrix{X\ar[rd]|-{\object@{|}}^r\ar[d]|-{\object@{|}}_t & \ \\ Z\ar@{}[ur]|-<<{\le} \ar[r]|-{\object@{|}}_s & Y}
&&&&
\xymatrix{Z & \\ X\ar[u]|-{\object@{|}}^t\ar@{}[ur]|-<<{\le} &
Y\ar[lu]|-{\object@{|}}_r\ar[l]|-{\object@{|}}^s}\\
\end{align*}
We call $r\homcompright t$ the \emph{extension of $r$ along $t$}, and $t\homcompleft r$ the \emph{lifting of $r$ along $t$}.

\subsection{$\Tth$-relations}
The functor $T:\SET\to\SET$ extends to a 2-functor $\Txi:\Mat{\V}\to\Mat{\V}$ as follows: we put $\Txi X=TX$ for each set $X$, and 
\begin{align*}
\Txi r:TX\times TY &\to\V\\
r(\frx,\fry) &\mapsto\bigvee\left\{\xi\cdot Tr(\frw)\;\Bigl\lvert\;\frw\in T(X\times Y), T\pi_1(\frw)=\frx,T\pi_2(\frw)=\fry\right\}
\end{align*}
for each $\V$-relation $r:X\relto Y$. That is, $\Txi r:TX\times TY\to\V$ is the smallest (order-preserving) map $s:TX\times TY\to\V$ such that $\xi\cdot Tr\le s\cdot\can$.
\[
\xymatrix{T(X\times Y)\ar[rr]^\can\ar[dr]_{\xi_{X\times Y}(r)=\xi\cdot Tr} & & 
TX\times TY\ar@{..>}[dl]^{\Txi r}\\ & \V\ar@{}[u]|-{\le}}
\]
As shown in \cite{Hof_TopTh}, we have $\Txi f=Tf$ for each function $f:X\to Y$, $\Txi(r^\circ)=\Txi(r)^\circ$ (and we write $\Txi r^\circ$) for each $\V$-relation $r:X\relto Y$, $m$ becomes a natural transformation $m:\Txi\Txi\to\Txi$ and $e$ an op-lax natural transformation $e:\Id\to\Txi$, i.e.\ $e_Y\circ r\leq \Txi r\circ e_X$ for all $r:X\relto Y$ in $\Mat{\V}$.

A $\V$-relation of the form $\alpha:TX\relto Y$ we call \emph{$\Tth$-relation} from $X$ to $Y$, and write $\alpha:X\krelto Y$. For $\Tth$-relations $\alpha:X\krelto Y$ and $\beta:Y\krelto Z$ we define the \emph{Kleisli convolution} $\beta\kleisli\alpha:X\krelto Z$ as
\[\beta\kleisli\alpha=\beta\cdot \Txi\alpha\cdot m_X^\circ.\]
Kleisli convolution is associative and has the $\Tth$-relation $e_X^\circ:X\krelto X$ as a lax identity: $a\kleisli e_X^\circ=a$ and $e_Y^\circ\kleisli a\ge a$ for any $a:X\krelto Y$. We call $a:X\krelto Y$ \emph{unitary} if $e_Y^\circ\kleisli a= a$, so that $e_X^\circ:X\krelto X$ is the identity on $X$ in the category $\UMat{\Tth}$ of sets and unitary $\Tth$-relations, with the Kleisli convolution as composition. In fact, $\UMat{\Tth}$ is a locally completely 2-category, where the 2-categorical structure is inherited from $\Mat{\V}$. Furthermore, for a $\Tth$-relation $\alpha:X\krelto Y$, the composition function $-\kleisli\alpha$ still has a right adjoint $(-)\homkleisliright\alpha$ but $\alpha\kleisli-$ in general not. Explicitly, given also $\gamma:X\krelto Z$, we pass from
\begin{align*}
\xymatrix{X\ar@{-^{>}}|-{\object@{|}}[r]^\gamma\ar@{-^{>}}|-{\object@{|}}[d]_\alpha & Z\\ Y} &&\text{to}&&
\xymatrix{TX\ar[r]|-{\object@{|}}^\gamma\ar[d]|-{\object@{|}}_{m_X^\circ} & Z\\
TTX\ar[d]|-{\object@{|}}_{\Txi\alpha}\\ TY}
\end{align*}
and define $\gamma\homkleisliright\alpha:=\gamma\homcompright(\Txi\alpha\cdot m_X^\circ)$. One easily verifies the required universal property, which in particular implies that $\gamma\homkleisliright\alpha$ is unitary if $\alpha$ and $\gamma$ are so.

\subsection{$\Tth$-categories}
A \emph{$\Tth$-category} is a pair $(X,a)$ consisting of a set $X$ and a $\Tth$-endorelation $a:X\krelto X$ on $X$ such that
\begin{align*}
e_X^\circ&\le a &&\text{and}& a\kleisli a&\le a.
\end{align*}
Expressed elementwise, these conditions become
\begin{align*}
k&\le a(e_X(x),x) &&\text{and}& \Txi a(\frX,\frx)\otimes a(\frx,x)\le a(m_X(\frX),x)
\end{align*}
for all $\frX\in TTX$, $\frx\in TX$ and $x\in X$. A function $f:X\to Y$ between $\Tth$-categories $(X,a)$ and $(Y,b)$ is a \emph{$\Tth$-functor} if $f\cdot a\le b\cdot Tf$, which in pointwise notation reads as
\[
 a(\frx,x)\le b(Tf(\frx),f(x))
\]
for all $\frx\in TX$, $x\in X$. If we have above even equality, we call $f:X\to Y$ \emph{fully faithful}. The resulting category of $\Tth$-categories and $\Tth$-functors we denote as $\Cat{\Tth}$. The quantale $\V$ becomes a $\Tth$-category $\V=(\V,\hom_\xi)$, where $\hom_\xi:T\V\times\V\to\V,\;(\frv,v)\mapsto\hom(\xi(\frv),v)$ (see \cite{Hof_TopTh}). 
\begin{examples}
\begin{enumerate}
\item For each quantale $\V$, $\Ith_\V$-categories are precisely $\V$-categories and $\Ith_\V$-functors are $\V$-functors. As usual, we write $\V$-category instead of $\Ith_\V$-category, $\V$-functor instead of $\Ith_\V$-functor, and $\Cat{\V}$ instead of $\Cat{\Ith_\V}$.
\item The main result of \cite{Bar_RelAlg} states that $\Cat{\Uth_\two}$ is isomorphic to the category $\TOP$ of topological spaces and continuous maps. In \cite{CH_TopFeat} it is shown that $\Cat{\Uth_{\Pplus}\!}$ is isomorphic to the category $\AP$ of approach spaces and non-expansive maps \cite{Low_ApBook}.
\end{enumerate}
\end{examples}
The category $\SET^\mT$ of $\mT$-algebras and $\mT$-homomorphisms can be embedded into $\Cat{\Tth}$ by regarding the structure map $\alpha:TX\to X$ of an Eilenberg--Moore algebra $(X,\alpha)$ as a $\Tth$-relation $\alpha:X\krelto X$. The $\Tth$-category resulting this way from the free Eilenberg--Moore algebra $(TX,m_X)$ we denote as $|X|$. The forgetful functor $\ForgetSET:\Cat{\Tth}\to\SET,\;(X,a)\mapsto X$ is \emph{topological} (see \cite{AHS}), hence has a left and a right adjoint and $\Cat{\Tth}$ is complete and cocomplete. The free $\Tth$-category on a set $X$ is given by $(X,e_X^\circ)$. In particular, the free $\Tth$-category $(1,e_1^\circ)$ on a one-element set is a generator in $\Cat{\Tth}$ which we denote as $G=(1,e_1^\circ)$. We have a canonical forgetful functor $\ForgetToV:\Cat{\Tth}\to\Cat{\V}$ sending a $\Tth$-category $X=(X,a)$ to its underlying $\V$-category $\ForgetToV\!X=(X,a\cdot e_X)$. Furthermore, $S$ has a left adjoint $\ForgetToVAd:\Cat{\V}\to\Cat{\Tth}$ defined by $\ForgetToVAd\!X=(X,e_X^\circ\cdot\Txi r)$, for each $\V$-category $X=(X,r)$. However, there is yet another interesting functor connecting $\Tth$-categories with $\V$-categories, namely $\MFunctor:\Cat{\Tth}\to\Cat{\V}$ which sends a $\Tth$-category $(X,a)$ to the $\V$-category $(TX,\Txi a\cdot m_X^\circ)$. This functors are used in \cite{CH_Compl} to define the \emph{dual} of a $\Tth$-category $X$:
\[X^\op=\ForgetToVAd(\MFunctor(X)^\op).\]
Clearly, if $\Tth=\Ith_\V$ is the identity theory $\Ith_\V=\itheory$, then $X^\op$ is the usual dual $\V$-category of $X$. It is by no means obvious why the definition above provides us with a ``good'' generalisation of this construction. We take Theorem \ref{CharTMod} as well as the Yoneda lemma for $\Tth$-categories (see Theorem \ref{Yoneda} and Corollary \ref{YonedaLem}) as a reason to believe so.

As studied in \cite{Hof_TopTh}, the tensor product of $\V$ can be transported to $\Cat{\Tth}$ by putting $(X,a)\otimes(Y,b)=(X\times Y,c)$ with
\[
c(\frw,(x,y))=a(\frx,x)\otimes b(\fry,y),
\]
where $\frw\in T(X\times Y)$, $x\in X$, $y\in Y$, $\frx=T\pi_1(\frw)$ and $\fry=T\pi_2(\frw)$. The $\Tth$-category $E=(1,k)$ is a $\otimes$-neutral object, where $1$ is a singleton set and $k:T1\times 1\to\V$ the constant relation with value $k\in\V$. In general, this constructions does not result in a closed structure on $\Cat{\Tth}$; however, the results of \cite{Hof_TopTh} give us the following
\begin{proposition}
For each $\mT$-algebra $X$, $X\otimes-:\Cat{\Tth}\to\Cat{\Tth}$ has a right adjoint $(-)^X:\Cat{\Tth}\to\Cat{\Tth}$. In particular, the structure $\fspstr{-}{-}$ on $\V^{|X|}$ is given by the formula
\[
\fspstr{\frp}{\psi}=
\bigwedge_{\substack{\frq\in T(|X|\times\V^{|X|})\\ \frq\mapsto\frp}}\hom(\xi\cdot T\ev(\frq),\psi(m_X\cdot T\pi_1(\frq))), 
\]
for each $\frp\in T\V^{|X|}$ and $\psi\in\V^{|X|}$. Moreover, for $\frp=e_{\V^{|X|}}(\varphi)$ we have
\[
\fspstr{e_{\V^{|X|}}(\varphi)}{\psi}=\bigwedge_{\frx\in TX}\hom(\varphi(\frx),\psi(\frx)).
\]
\end{proposition}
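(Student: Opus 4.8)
That $X\otimes-$ has a right adjoint $(-)^X$ for every $\mT$-algebra $X$ is established in \cite{Hof_TopTh}; the point here is to compute the structure of the power $\V^{|X|}$. The guiding principle I would use is that, for the counit $\ev\colon|X|\otimes\V^{|X|}\to\V$, the structure of $\V^{|X|}$ is the \emph{largest} $\Tth$-relation on the underlying set of $\V^{|X|}$ making $\ev$ a $\Tth$-functor: any other such structure $r'$ transposes the $\Tth$-functor $\ev\colon|X|\otimes(\V^{|X|},r')\to\V$ into a $\Tth$-functor that is the identity on underlying sets, forcing $r'$ to lie below the structure of $\V^{|X|}$.

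To turn this into the formula, I would unfold the tensor. Since the structure of $|X|=(TX,m_X)$ is the graph of $m_X$, the $\Tth$-category $|X|\otimes\V^{|X|}$ carries $c(\frq,(\frx,\psi))=r(T\pi_2(\frq),\psi)$ when $\frx=m_X\cdot T\pi_1(\frq)$ and $c(\frq,(\frx,\psi))=\bot$ otherwise; hence $\ev$ is a $\Tth$-functor exactly when $r(T\pi_2(\frq),\psi)\le\hom(\xi\cdot T\ev(\frq),\psi(m_X\cdot T\pi_1(\frq)))$ for all $\frq$ and $\psi$. Solving for the largest admissible $r$ at a given $\frp$ and $\psi$ yields precisely $\fspstr{\frp}{\psi}=\bigwedge\{\hom(\xi\cdot T\ev(\frq),\psi(m_X\cdot T\pi_1(\frq)))\mid T\pi_2(\frq)=\frp\}$, which is the claimed formula.

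To be sure this really is the adjoint, I would confirm the transposition bijection $\Cat{\Tth}(|X|\otimes Y,\V)\cong\Cat{\Tth}(Y,\V^{|X|})$, sending $g$ to $\bar g$ with $\bar g(y)(\frx)=g(\frx,y)$, i.e.\ $g=\ev\cdot(1_{TX}\times\bar g)$. One direction is immediate: instantiating the defining infimum of $\fspstr{T\bar g(\fry)}{\bar g(y)}$ at $\frq=T(1_{TX}\times\bar g)(\frw)$ reproduces exactly the $\Tth$-functor inequality for $g$. The step I expect to be the main obstacle is the converse, since $\fspstr{T\bar g(\fry)}{\bar g(y)}$ ranges over \emph{all} $\frq$ with $T\pi_2(\frq)=T\bar g(\fry)$, not only those coming from some $\frw$. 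This is exactly where condition \BC\ is indispensable: the square presenting $TX\times Y$ as the pullback of $\bar g\colon Y\to\V^{|X|}$ along $\pi_2\colon TX\times\V^{|X|}\to\V^{|X|}$ is sent by $T$ to a weak pullback, so each such $\frq$ lifts to a $\frw\in T(TX\times Y)$ with $T(1_{TX}\times\bar g)(\frw)=\frq$ and $T\pi_2(\frw)=\fry$; feeding this $\frw$ into the functoriality of $g$ and taking the infimum gives $b(\fry,y)\le\fspstr{T\bar g(\fry)}{\bar g(y)}$.

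Finally, for $\frp=e_{\V^{|X|}}(\varphi)$ I would argue the two inequalities directly. For ``$\le$'', evaluating the infimum at $\frq=e_{TX\times\V^{|X|}}(\frx,\varphi)$ and using naturality of $e$ with the unit laws $\xi\cdot e=1_\V$ and $m_X\cdot e_{TX}=1_{TX}$ collapses the corresponding term to $\hom(\varphi(\frx),\psi(\frx))$. For ``$\ge$'', given any $\frq$ over $e_{\V^{|X|}}(\varphi)$, condition \BC\ applied to the pullback of the map $1\to\V^{|X|}$ picking $\varphi$ along $\pi_2$ lifts $\frq$ along $\frx\mapsto(\frx,\varphi)$ to some $\tilde\frq\in TTX$ with $\xi\cdot T\ev(\frq)=\xi\cdot T\varphi(\tilde\frq)$ and $m_X\cdot T\pi_1(\frq)=m_X(\tilde\frq)$. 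As $\varphi$ is a point of $\V^{|X|}$, it satisfies $\xi\cdot T\varphi(\tilde\frq)\le\varphi(m_X(\tilde\frq))$, so the term indexed by $\frq$ is at least $\hom(\varphi(m_X(\tilde\frq)),\psi(m_X(\tilde\frq)))$ and hence at least $\bigwedge_{\frx}\hom(\varphi(\frx),\psi(\frx))$; this yields the reverse inequality and the stated pointwise formula.
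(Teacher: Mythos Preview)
Your proposal is correct, and in fact supplies far more than the paper itself: the proposition appears there without proof, introduced only by the phrase ``the results of \cite{Hof_TopTh} give us the following''. So there is no argument in the paper to compare against beyond the bare citation; your strategy---characterising $\fspstr{-}{-}$ as the largest $\Tth$-relation making $\ev$ a $\Tth$-functor, unfolding the tensor using that the structure of $|X|$ is a function, and invoking \BC\ for the transposition bijection---is precisely the expected unpacking of that reference.

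One simplification is available for the ``$\ge$'' half of the second formula. Rather than invoking \BC\ a second time and appealing to the $\Tth$-functoriality of $\varphi$ as a map $|X|\to\V$, you can use the reflexivity axiom $k\le\fspstr{e_{\V^{|X|}}(\varphi)}{\varphi}$ of the $\Tth$-category $\V^{|X|}$ together with the first formula you have already established: this says directly that $\xi\cdot T\ev(\frq)\le\varphi(m_X\cdot T\pi_1(\frq))$ for every $\frq$ with $T\pi_2(\frq)=e_{\V^{|X|}}(\varphi)$, whence each term of the infimum dominates
\[
\hom\bigl(\varphi(m_X\cdot T\pi_1(\frq)),\psi(m_X\cdot T\pi_1(\frq))\bigr)\ \ge\ \bigwedge_{\frx\in TX}\hom(\varphi(\frx),\psi(\frx)).
\]
Your \BC-based argument is also valid (the lift you produce satisfies $T!(\tilde\frq)=e_1(*)$, which is exactly what is needed), but the reflexivity route is shorter and sidesteps any discussion of what the elements of $\V^{|X|}$ are.
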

Furthermore, several maps obtained from the quantale structure on $\V$ become now $\Tth$-functors.
\begin{proposition}\label{SomeFunctors}
The following assertions hold.
\begin{enumerate}
\item Both $k:E\to\V$ and $\otimes:\V\otimes\V\to\V$ are $\Tth$-functors, hence $\V$ is even a monoid in $(\Cat{\Tth},\otimes,E)$.
\item $\xi:|\V|\to\V$ is a $\Tth$-functor.
\item $\bigvee:\V^{|X|}\to\V$ is a $\Tth$-functor, for each set $X$.
\end{enumerate}
\end{proposition}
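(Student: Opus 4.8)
The plan is to verify, in each case, the defining inequality $a(\frx,x)\le b(Tf(\frx),f(x))$ of a $\Tth$-functor $f$, by unfolding the structures and invoking the compatibility axioms of the topological theory together with elementary quantale arithmetic. Since the codomain is always $\V=(\V,\hom_\xi)$, the target side reads $\hom(\xi\cdot Tf(\frx),f(x))$, and I shall use repeatedly the quantale adjunction $u\otimes v\le w\iff u\le\hom(v,w)$, the facts $\hom(w,w)\ge k$ and $\hom(k,k)=k$, that $\otimes$ preserves suprema in each variable, and the inequality $\hom(a,b)\otimes\hom(c,d)\le\hom(a\otimes c,b\otimes d)$ (immediate from $\hom(a,b)\otimes a\le b$ and commutativity).

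For (1), the $\Tth$-category $E=(1,k)$ has constant structure $k$, so checking that $k\colon E\to\V$ is a $\Tth$-functor amounts to $k\le\hom(\xi\cdot Tk(\frx),k)$ for every $\frx\in T1$. The third compatibility diagram gives $\xi\cdot Tk=k\cdot{!}$, so the right-hand side is $\hom(k,k)=k$ and the inequality holds (with equality). For $\otimes\colon\V\otimes\V\to\V$, fix $\frw\in T(\V\times\V)$ and $(u,v)\in\V\times\V$ and put $\frx=T\pi_1(\frw)$, $\fry=T\pi_2(\frw)$; the structure of $\V\otimes\V$ makes the inequality read
\[
\hom(\xi(\frx),u)\otimes\hom(\xi(\fry),v)\le\hom(\xi\cdot T(\otimes)(\frw),u\otimes v).
\]
The fourth compatibility diagram identifies $\xi\cdot T(\otimes)(\frw)$ with $\xi(\frx)\otimes\xi(\fry)$, and then this is precisely the quantale inequality recalled above. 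Once $k$ and $\otimes$ are $\Tth$-functors, the monoid laws hold automatically: they are diagrams of $\Tth$-functors whose underlying diagrams in $\SET$ commute because $\V$ is a commutative monoid, and $\ForgetSET$ is faithful.

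For (2), recall that $|\V|=(T\V,m_\V)$ carries the graph of $m_\V$ as its $\Tth$-structure. Hence the inequality to verify, $m_\V(\frV,\frv)\le\hom(\xi\cdot T\xi(\frV),\xi(\frv))$, is non-trivial only when $m_\V(\frV)=\frv$, in which case its left-hand side equals $k$. The second compatibility diagram is exactly the associativity axiom of the $\mT$-algebra $(\V,\xi)$, giving $\xi\cdot T\xi(\frV)=\xi\cdot m_\V(\frV)=\xi(\frv)$; thus the right-hand side becomes $\hom(\xi(\frv),\xi(\frv))\ge k$, as needed.

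Assertion (3) is where the real work lies, and it is the step I expect to be the main obstacle, because here I genuinely need condition (ii) of a topological theory---the naturality of $\xi_{(-)}\colon P_\V\to P_\V T$---whereas (1) and (2) used only condition (i). Using the quantale adjunction, showing that $\bigvee\colon\V^{|X|}\to\V$ is a $\Tth$-functor is equivalent to
\[
\fspstr{\frp}{\psi}\otimes\xi\cdot T(\textstyle\bigvee)(\frp)\le\textstyle\bigvee_{\frx\in TX}\psi(\frx)
\]
for all $\frp\in T\V^{|X|}$ and $\psi\in\V^{|X|}$. The key is to recognise the map $\bigvee\colon\V^{|X|}\to\V$ as $P_\V(\pi_2)(\ev)$, where $\ev$ and $\pi_2$ live on $|X|\times\V^{|X|}$, and then to apply the naturality square of $\xi_{(-)}$ at the map $\pi_2$, evaluated at the element $\ev$. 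This produces the identity
\[
\xi\cdot T(\textstyle\bigvee)(\frp)=\textstyle\bigvee\{\,\xi\cdot T\ev(\frq)\mid \frq\in T(|X|\times\V^{|X|}),\ T\pi_2(\frq)=\frp\,\}.
\]
Substituting this into the left-hand side and distributing $\otimes$ over the supremum reduces the claim to showing, for each such $\frq$, that $\fspstr{\frp}{\psi}\otimes\xi\cdot T\ev(\frq)\le\bigvee_{\frx}\psi(\frx)$. But the defining formula for $\fspstr{\frp}{\psi}$ yields $\fspstr{\frp}{\psi}\le\hom(\xi\cdot T\ev(\frq),\psi(m_X\cdot T\pi_1(\frq)))$, which by adjunction gives $\fspstr{\frp}{\psi}\otimes\xi\cdot T\ev(\frq)\le\psi(m_X\cdot T\pi_1(\frq))\le\bigvee_{\frx}\psi(\frx)$, completing the argument. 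The only delicate point is the bookkeeping in the naturality square: checking that $P_\V(\pi_2)(\ev)$ really is $\bigvee$, and that the two legs of the square evaluated at $\ev$ and then at $\frp$ produce exactly the fibrewise supremum over $T\pi_2(\frq)=\frp$ displayed above.
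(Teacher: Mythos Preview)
Your proof is correct. For parts (1) and (2) the paper simply declares them ``easy to prove'' and gives no details; your verification via the compatibility diagrams is exactly the intended routine check, so there is nothing to compare.

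For part (3) the paper does not prove anything either: it just cites Proposition~6.11 of \cite{Hof_TopTh}. Your argument is therefore not so much a \emph{different} route as the route the paper chose to outsource. It is correct and self-contained, and your identification of the crucial step is spot on: recognising $\bigvee = P_\V(\pi_2)(\ev)$ and then invoking the naturality square of $\xi_{(-)}:P_\V\to P_\V T$ at $\pi_2$ is precisely how axiom~(ii) of a topological theory enters, yielding
\[
\xi\cdot T(\textstyle\bigvee)(\frp)=\bigvee\{\,\xi\cdot T\ev(\frq)\mid T\pi_2(\frq)=\frp\,\},
\]
after which the defining infimum for $\fspstr{\frp}{\psi}$ finishes things off term by term. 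The only cosmetic point: in (1) you write $\hom(k,k)=k$; this is in fact always true (since $\hom(k,-)$ is the identity), so your parenthetical ``with equality'' is justified, though for the argument $k\le\hom(k,k)$ would have sufficed.
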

\begin{proof}
(1) and (2) are easy to prove, (3) is a consequence of \cite[Proposition 6.11]{Hof_TopTh}.
\end{proof}

\subsection{$\Tth$-modules}
Let $X=(X,a)$ and $Y=(Y,b)$ be $\Tth$-categories and $\varphi:X\krelto Y$ be a $\Tth$-relation. We call $\varphi$ a \emph{$\Tth$-module}, and write $\varphi:X\kmodto Y$, if $\varphi\kleisli a\le\varphi$ and $b\kleisli \varphi\le \varphi$. Note that we always have $\varphi\kleisli a\ge\varphi$ and $b\kleisli \varphi\ge \varphi$, so that the $\Tth$-module condition above implies equality. Kleisli convolution is associative, and it follows that $\psi\kleisli\varphi$ is a $\Tth$-module if $\psi:Y\kmodto Z$ and $\varphi:X\kmodto Y$ are so. Furthermore, we have $a:X\kmodto X$ for each $\Tth$-category $X=(X,a)$, and, by definition, $a$ is the identity $\Tth$-module on $X$ for the Kleisli convolution. In other words, $\Tth$-categories and $\Tth$-modules form a category, denoted as $\Mod{\Tth}$, with Kleisli convolution as compositional structure. In fact, $\Mod{\Tth}$ is an ordered category with the structure on $\hom$-sets inherited from $\Mat{\Tth}$. As before, a $\Ith_\V$-module we call simply $\V$-module and write $\varphi:X\modto Y$, and put $\Mod{\V}=\Mod{\Ith_\V}$. Finally, a $\Tth$-relation $\varphi:X\krelto Y$ is unitary precisely if $\varphi$ is a $\Tth$-module $\varphi:(X,e_X^\circ)\kmodto(Y,e_Y^\circ)$ between the corresponding discrete $\Tth$-categories. 
\begin{remark}\label{AdjKleisli}
Since the compositional and the order structure for $\Tth$-modules is as for $\Tth$-relations, for each $\Tth$-module $\varphi:(X,a)\kmodto(Y,b)$ and each $\Tth$-category $Z=(Z,c)$ we have an order-preserving map $-\kleisli\varphi:\Mod{\Tth}(Y,Z)\to\Mod{\Tth}(X,Z)$. One easily verifies that, if $\zeta:(X,a)\kmodto(Z,c)$ is a $\Tth$-modules, then so is $\zeta\homkleisliright\varphi$. Hence, $-\kleisli\varphi$ has a right adjoint $(-)\homkleisliright\varphi$. Furthermore, if $\varphi\dashv\psi$ in $\Mod{\Tth}$, then $-\kleisli\psi\dashv-\kleisli\varphi$ in $\ORD$, and therefore $-\kleisli\varphi=(-)\homkleisliright\psi$.
\end{remark}

Let now $X=(X,a)$ and $Y=(Y,b)$ be $\Tth$-categories and $f:X\to Y$ be a function. We define $\Tth$-relations $f_*:X\krelto Y$ and $f^*:Y\krelto X$ by putting $f_*=b\cdot Tf$ and $f^*=f^\circ\cdot b$ respectively. Hence, for $\frx\in TX$, $\fry\in TY$, $x\in X$ and $y\in Y$, we have $f_*(\frx,y)=b(Tf(\frx),y)$ and $f^*(\fry,x)=b(\fry,f(x))$. Given now $\Tth$-modules $\varphi$ and $\psi$, we obtain 
\begin{align*}
\varphi\kleisli f_*&=\varphi\cdot Tf &&\text{and}&& f^*\kleisli\psi=f^\circ\cdot\psi.
\end{align*}
In particular, $b\kleisli f_*=f_*$ and $f^*\kleisli b=f^*$, as well as $f_*\kleisli f^*=b\cdot Tf\cdot Tf^\circ\cdot\Txi b\cdot m_Y^\circ\le b$. The following lemma can be easily verified.
\begin{lemma}\label{Modules}
The following assertions are equivalent.
\begin{eqcond}
\item $f:X\to Y$ is a $\Tth$-functor.
\item $f_*$ is a $\Tth$-module $f_*:X\kmodto Y$.
\item $f^*$ is a $\Tth$-module $f^*:Y\kmodto X$.
\item $a\le f^*\kleisli f_*$.
\end{eqcond}
\end{lemma}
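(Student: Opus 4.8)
The plan is to designate condition (iv) as the hub and show it is equivalent to each of (i), (ii), (iii). The single computation that does the real work is the identity
\[
f^*\kleisli f_* = f^*\cdot Tf,
\]
which I would establish first. Unfolding the Kleisli convolution gives $f^*\kleisli f_*=f^\circ\cdot b\cdot\Txi b\cdot TTf\cdot m_X^\circ$, and then \BC\ in the Beck--Chevalley form $TTf\cdot m_X^\circ=m_Y^\circ\cdot Tf$ (attached to the weak-pullback naturality square of $m$), together with $b\kleisli b=b$, collapses the middle: $f^\circ\cdot(b\kleisli b)\cdot Tf=f^\circ\cdot b\cdot Tf=f^*\cdot Tf$. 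With this in hand the equivalence (i)$\Leftrightarrow$(iv) is immediate by evaluation, since $(f^*\cdot Tf)(\frx,x)=b(Tf(\frx),f(x))$, so that $a\le f^*\kleisli f_*$ reads pointwise as exactly the $\Tth$-functor inequality $a(\frx,x)\le b(Tf(\frx),f(x))$.

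Next I would read off (iv)$\Rightarrow$(ii) and (iv)$\Rightarrow$(iii) purely formally, using associativity of Kleisli convolution together with the identities already recorded in the text, namely $f_*\kleisli f^*\le b$, $b\kleisli f_*=f_*$ and $f^*\kleisli b=f^*$. Indeed, from $a\le f^*\kleisli f_*$ one gets $f_*\kleisli a\le(f_*\kleisli f^*)\kleisli f_*\le b\kleisli f_*=f_*$, so $f_*$ is a module (the reverse inequality being automatic); symmetrically $a\kleisli f^*\le f^*\kleisli(f_*\kleisli f^*)\le f^*\kleisli b=f^*$, so $f^*$ is a module.

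It remains to close the loop with (ii)$\Rightarrow$(i) and (iii)$\Rightarrow$(iv), and this is where the monad unit has to be fed in by hand. For (ii)$\Rightarrow$(i) I would prove the always-valid estimate $f\cdot a\le f_*\kleisli a$: starting from $f_*\kleisli a=b\cdot Tf\cdot\Txi a\cdot m_X^\circ$, apply $m_X^\circ\ge e_{TX}$ (from $m_X\cdot e_{TX}=1$), then op-laxness of $e$ in the form $\Txi a\cdot e_{TX}\ge e_X\cdot a$, then naturality $Tf\cdot e_X=e_Y\cdot f$, and finally reflexivity $b\cdot e_Y\ge 1_Y$ (from $e_Y^\circ\le b$) to reach $f_*\kleisli a\ge f\cdot a$; combined with (ii) this yields $f\cdot a\le f_*$. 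For (iii)$\Rightarrow$(iv) I would compose the module inequality $a\kleisli f^*\le f^*$ on the right with $Tf$ and use \BC\ again (now $m_Y^\circ\cdot Tf=TTf\cdot m_X^\circ$, together with $\Txi$ being a functor) to identify $(a\kleisli f^*)\cdot Tf$ with $a\kleisli(f^*\cdot Tf)$; writing $g:=f^*\cdot Tf=f^*\kleisli f_*$ this gives $a\kleisli g\le g$, and since $g\ge e_X^\circ$ (reflexivity of $b$) the strict right-unit law yields $a=a\kleisli e_X^\circ\le a\kleisli g\le g$, i.e.\ (iv).

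The main obstacle I anticipate is not the formal bookkeeping but these two unit-insertion steps: phrasing the passage between the elementwise $\Tth$-functor inequality and the Kleisli-convolution module conditions so that op-laxness of $e$, the inequality $e_{TX}\le m_X^\circ$, and reflexivity are applied on the correct side and with the correct variance. The Beck--Chevalley identity $TTf\cdot m_X^\circ=m_Y^\circ\cdot Tf$ is the one technical input that makes both the central identity and (iii)$\Rightarrow$(iv) go through, so I would isolate it at the very start.
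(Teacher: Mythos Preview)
Your argument is correct. The paper does not actually give a proof of this lemma---it only records the statement with the remark ``the following lemma can be easily verified''---so there is nothing to compare against; you have supplied a complete and careful verification where the paper leaves the details to the reader. Your choice to isolate the identity $f^*\kleisli f_*=f^\circ\cdot b\cdot Tf$ (via \BC\ for $m$ and $b\kleisli b=b$) and use it as the hub is exactly the kind of computation the author has in mind, and the unit-insertion steps for (ii)$\Rightarrow$(i) and (iii)$\Rightarrow$(iv) are handled correctly. One cosmetic remark: the three auxiliary facts you rely on ($b\kleisli f_*=f_*$, $f^*\kleisli b=f^*$, $f_*\kleisli f^*\le b$) are indeed established in the paragraph immediately preceding the lemma and do not presuppose that $f$ is a $\Tth$-functor, so there is no circularity.
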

As a consequence, for each $\Tth$-functor $f:(X,a)\to(Y,b)$ we have an adjunction $f_*\dashv f^*$ in $\Mod{\Tth}$. Moreover, given also a $\Tth$-functor $g:(Y,b)\to(Z,c)$,
\begin{align*}
&g_*\circ f_*=c\cdot Tg\cdot Tf=c\cdot T(g\cdot f)=(g\cdot f)_*\\
\intertext{and}
&f^*\circ g^*=f^\circ\cdot g^\circ\cdot c=(g\cdot f)^\circ\cdot c=(g\cdot f)^*.
\end{align*}
Since also $(1_X)_*=(1_X)^*=a$, we obtain functors
\begin{align*}
 (-)_*:\Cat{\Tth}\to\Mod{\Tth} &&\text{and}&& (-)^*:\Cat{\Tth}^\op\to\Mod{\Tth},
\end{align*}
where $X_*=X=X^*$, for each $\Tth$-category $X$.
\begin{lemma}
A $\Tth$-functor $f:(X,a)\to(Y,b)$ is fully faithful if and only if $1_X^*=f^*\kleisli f_*$.
\end{lemma}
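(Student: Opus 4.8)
The plan is to reduce both sides of the claimed identity to a single pointwise expression and to observe that their equality is literally the fully-faithfulness condition. Throughout, $f$ is a $\Tth$-functor, so by Lemma \ref{Modules} the $\Tth$-relation $f_*=b\cdot Tf$ is a $\Tth$-module $f_*\colon X\kmodto Y$; this is precisely what lets me apply the computational rules recorded just before Lemma \ref{Modules}.

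First I would rewrite the right-hand side. Applying the identity $f^*\kleisli\psi=f^\circ\cdot\psi$ (valid for a $\Tth$-module $\psi$, here with $\psi=f_*$) gives
\[
f^*\kleisli f_*=f^\circ\cdot f_*=f^\circ\cdot b\cdot Tf.
\]
The step that deserves attention is that it genuinely uses the module property of $f_*$: unfolding the Kleisli convolution and $f^*=f^\circ\cdot b$ yields $f^*\kleisli f_*=f^\circ\cdot(b\cdot\Txi f_*\cdot m_X^\circ)=f^\circ\cdot(b\kleisli f_*)$, and the simplification $b\kleisli f_*=f_*$ is exactly the module law $b\kleisli f_*\le f_*$ together with the always-valid reverse inequality $b\kleisli f_*\ge f_*$. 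This is the only place where the $\Tth$-functor hypothesis enters, and it is common to both directions of the equivalence.

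Next I would compute $f^\circ\cdot b\cdot Tf$ elementwise. Since $f^\circ\colon Y\relto X$ is the reversed graph of $f$, taking value $k$ on the pairs $(f(x),x)$ and $\bot$ elsewhere, the supremum defining composition in $\Mat{\V}$ collapses to the single term $y=f(x)$, giving
\[
(f^*\kleisli f_*)(\frx,x)=b(Tf(\frx),f(x))
\]
for all $\frx\in TX$ and $x\in X$. On the other hand $1_X^*=a$, since $(1_X)^*=a$ as recorded before the lemma, so $1_X^*(\frx,x)=a(\frx,x)$.

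Finally I would conclude. The two $\Tth$-relations $1_X^*$ and $f^*\kleisli f_*$ coincide if and only if $a(\frx,x)=b(Tf(\frx),f(x))$ for all $\frx\in TX$ and $x\in X$, and this is exactly the definition of $f$ being fully faithful, namely equality in the defining inequality $a\le b\cdot Tf$. Both implications of the biconditional then drop out of this single pointwise identification. I do not expect a substantial obstacle here; the only genuine care required is the justification of $f^*\kleisli f_*=f^\circ\cdot f_*$ through the module property of $f_*$, as isolated in the second paragraph.
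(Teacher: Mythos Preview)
Your proof is correct and follows exactly the computation one would expect; the paper states this lemma without proof, treating it as an immediate consequence of the identities $f^*\kleisli\psi=f^\circ\cdot\psi$ and $(1_X)^*=a$ recorded just before. One small remark: in your final sentence the phrase ``equality in the defining inequality $a\le b\cdot Tf$'' does not type-check as written (the two sides are $\V$-relations $TX\relto X$ and $TX\relto Y$ respectively); what you mean, and what you correctly use, is the pointwise equality $a(\frx,x)=b(Tf(\frx),f(x))$, equivalently $a=f^\circ\cdot b\cdot Tf$.
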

\begin{lemma}\label{AdjModLem}
Consider $\Tth$-modules $\varphi:X\kmodto Y$, $\psi:X\kmodto Z$ and $\alpha:Y\kmodto B$, where $\alpha$ is right adjoint. Then
\[
\alpha\kleisli(\varphi\homkleisliright\psi)=(\alpha\kleisli\varphi)\homkleisliright\psi.
\]
\end{lemma}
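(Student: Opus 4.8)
The plan is to prove the two inequalities separately; only one of them will use that $\alpha$ is right adjoint. I write $b$ and $c$ for the identity ($=$ structure) $\Tth$-modules on $Y$ and $B$, and abbreviate $L=(\alpha\kleisli\varphi)\homkleisliright\psi$ and $R=\alpha\kleisli(\varphi\homkleisliright\psi)$, both $\Tth$-modules $Z\kmodto B$.

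First I would dispatch $R\le L$, which needs no hypothesis on $\alpha$. By the definition of $(-)\homkleisliright\psi$ as right adjoint to $-\kleisli\psi$ (Remark \ref{AdjKleisli}), the module $L$ is the largest $\zeta:Z\kmodto B$ with $\zeta\kleisli\psi\le\alpha\kleisli\varphi$; hence it suffices to check $R\kleisli\psi\le\alpha\kleisli\varphi$. Using associativity of Kleisli convolution together with the counit $(\varphi\homkleisliright\psi)\kleisli\psi\le\varphi$ of that same adjunction, I compute
\[
R\kleisli\psi=\alpha\kleisli\bigl((\varphi\homkleisliright\psi)\kleisli\psi\bigr)\le\alpha\kleisli\varphi,
\]
so $R\le L$.

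For the reverse inequality $L\le R$ I would bring in the left adjoint. Since $\alpha$ is right adjoint, fix $\lambda\dashv\alpha$ in $\Mod{\Tth}$ with $\lambda:B\kmodto Y$, so that the unit $c\le\alpha\kleisli\lambda$ and the counit $\lambda\kleisli\alpha\le b$ hold. The key intermediate step is to whisker this into an adjunction $\lambda\kleisli-\dashv\alpha\kleisli-$ between $\Mod{\Tth}(Z,B)$ and $\Mod{\Tth}(Z,Y)$: for $\delta:Z\kmodto B$ and $\gamma:Z\kmodto Y$ one has $\lambda\kleisli\delta\le\gamma\iff\delta\le\alpha\kleisli\gamma$, the forward direction using the unit together with $c\kleisli\delta=\delta$ and the backward direction using the counit together with $b\kleisli\gamma=\gamma$ (both valid since $\delta,\gamma$ are modules). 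Granting this, the proof is a chain of adjunction bijections — $\lambda\kleisli-\dashv\alpha\kleisli-$, then $-\kleisli\psi\dashv(-)\homkleisliright\psi$ —
\[
L\le\alpha\kleisli(\varphi\homkleisliright\psi)\iff\lambda\kleisli L\le\varphi\homkleisliright\psi\iff(\lambda\kleisli L)\kleisli\psi\le\varphi,
\]
and the last inequality follows from $(\lambda\kleisli L)\kleisli\psi=\lambda\kleisli(L\kleisli\psi)\le\lambda\kleisli(\alpha\kleisli\varphi)=(\lambda\kleisli\alpha)\kleisli\varphi\le b\kleisli\varphi=\varphi$, where $L\kleisli\psi\le\alpha\kleisli\varphi$ is once more the counit of the extension adjunction. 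Combining the two inequalities gives $L=R$.

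The main obstacle I anticipate is not any single estimate — each is a one-line consequence of associativity and a unit or counit — but rather correctly setting up the whiskered adjunction $\lambda\kleisli-\dashv\alpha\kleisli-$ while keeping the variance of Kleisli convolution straight, and ensuring the identity modules $b$, $c$ genuinely act as two-sided units on the modules in play (this is precisely where the \emph{module} conditions, as opposed to mere $\Tth$-relation conditions, enter). Once that bookkeeping is in place the equality drops out formally, exactly as the analogous statement does for $\V$-modules.
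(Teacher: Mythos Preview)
Your proof is correct. Both inequalities go through exactly as you describe, and the whiskered adjunction $\lambda\kleisli-\dashv\alpha\kleisli-$ is set up with the right variance.

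The paper's argument is different in packaging, though ultimately rests on the same ingredients. Rather than split into two inequalities, it observes that each side of the desired equation is a composite of two right adjoints:
\[
\xymatrix{\Mod{\Tth}(X,Y)\ar[r]^{(-)\homkleisliright\psi}\ar[d]_{\alpha\kleisli-} &
  \Mod{\Tth}(Z,Y)\ar[d]^{\alpha\kleisli -}\\ \Mod{\Tth}(X,B)\ar[r]_{(-)\homkleisliright\psi} &\Mod{\Tth}(Z,B)}
\]
and then notes that this square of right adjoints commutes if and only if the corresponding square of left adjoints (with $\beta\kleisli-$ replacing $\alpha\kleisli-$ and $-\kleisli\psi$ replacing $(-)\homkleisliright\psi$) commutes, which it does by associativity of Kleisli convolution. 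This ``mates'' argument handles both directions at once and makes transparent why the adjointness hypothesis on $\alpha$ is exactly what is needed to view $\alpha\kleisli-$ as a right adjoint. Your approach, by contrast, isolates the inequality $R\le L$ as holding for \emph{any} $\alpha$, which is a small piece of extra information the paper's symmetric argument does not surface; on the other hand the paper's version is shorter and avoids the explicit unit/counit manipulations.
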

\begin{proof}
Let $\beta:B\kmodto Y$ be the left adjoint of $\alpha$. We have to show that the diagram
\[
\xymatrix{\Mod{\Tth}(X,Y)\ar[r]^{(-)\homkleisliright\psi}\ar[d]_{\alpha\kleisli-} &
  \Mod{\Tth}(Z,Y)\ar[d]^{\alpha\kleisli -}\\ \Mod{\Tth}(X,B)\ar[r]_{(-)\homkleisliright\psi} &\Mod{\Tth}(Z,B)}
\]
of right adjoints commutes. But the diagram
\[
\xymatrix{\Mod{\Tth}(X,Y) & \Mod{\Tth}(Z,Y)\ar[l]_{-\kleisli\psi}\\ \Mod{\Tth}(X,B)\ar[u]^{\beta\kleisli -} & \Mod{\Tth}(Z,B)\ar[u]_{\beta\kleisli -}\ar[l]^{-\kleisli\psi}}
\]
of the corresponding left adjoints commutes since Kleisli convolution is associative, and the assertion follows.
\end{proof}
\begin{theorem}[\cite{CH_Compl}]\label{CharTMod}
For $\Tth$-categories $(X,a)$ and $(Y,b)$, and a $\Tth$-relation $\psi:X\krelto Y$, the following assertions are equivalent.
\begin{eqcond}
\item $\psi:(X,a)\kmodto(Y,b)$ is a $\Tth$-module.
\item Both $\psi:|X|\otimes Y\to\V$ and $\psi:X^\op\otimes Y\to\V$ are $\Tth$-functors.
\end{eqcond}
\end{theorem}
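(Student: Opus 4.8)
The plan is to translate both defining inequalities of a $\Tth$-module and the two $\Tth$-functoriality conditions into pointwise inequalities in $\V$ and then match them. The key simplification is that the underlying set of both $|X|\otimes Y$ and $X^\op\otimes Y$ is $TX\times Y$, and that $\psi$ is literally the same function $TX\times Y\to\V$ in each case; only the $\Tth$-structure on the domain differs. Throughout I use that a map $g:(A,d)\to\V$ is a $\Tth$-functor precisely when $d(\fra,a)\otimes\xi(Tg(\fra))\le g(a)$ for all $\fra\in TA$, $a\in A$, this being the condition $d(\fra,a)\le\hom(\xi(Tg(\fra)),g(a))$ rewritten by the adjunction $-\otimes u\dashv\hom(u,-)$ in $\V$. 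I abbreviate the module conditions as (M1) $\psi\kleisli a\le\psi$ and (M2) $b\kleisli\psi\le\psi$; the reverse inequalities hold automatically, and together they force $\psi$ to be unitary.

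First I would settle the cleaner correspondence, (M2) $\Leftrightarrow$ [$\psi:|X|\otimes Y\to\V$ is a $\Tth$-functor]. Since $|X|=(TX,m_X)$ carries the graph of the function $m_X$ as its structure, the tensor formula gives $c(\frW,(\frx,y))=m_X(T\pi_1\frW,\frx)\otimes b(T\pi_2\frW,y)$, and the first factor equals $k$ exactly when $m_X(T\pi_1\frW)=\frx$ and is $\bot$ otherwise. The functoriality inequality therefore collapses to $\xi(T\psi(\frW))\otimes b(T\pi_2\frW,y)\le\psi(m_X(T\pi_1\frW),y)$ for all $\frW\in T(TX\times Y)$. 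Expanding the other side, $b\kleisli\psi=b\cdot\Txi\psi\cdot m_X^\circ$, pointwise---with $\Txi\psi$ written as its defining supremum over $\frW\in T(TX\times Y)$ constrained by $T\pi_1\frW=\frX$ and $T\pi_2\frW=\fry$---produces exactly the same family of inequalities, so the two conditions coincide.

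The crux is the correspondence for $X^\op\otimes Y$. Here I must unwind $X^\op=\ForgetToVAd(\MFunctor(X)^\op)$: writing $\bar a=\Txi a\cdot m_X^\circ$ for the $\MFunctor(X)$-structure and applying $\ForgetToVAd(Z,r)=(Z,e_Z^\circ\cdot\Txi r)$ to $(TX,\bar a^\circ)$ gives the $X^\op$-structure $a^\op=e_{TX}^\circ\cdot\Txi(\bar a^\circ)$; since $e_{TX}^\circ$ is a graph and $\Txi(r^\circ)=\Txi(r)^\circ$, this simplifies to $a^\op(\frX,\frx)=\Txi\bar a(e_{TX}(\frx),\frX)$. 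The functoriality inequality, call it $(\star)$, then reads $\Txi\bar a(e_{TX}(\frx),T\pi_1\frW)\otimes b(T\pi_2\frW,y)\otimes\xi(T\psi(\frW))\le\psi(\frx,y)$. For $(\star)\Rightarrow$ (M1) I would specialise $\frW=e_{TX\times Y}(\frx',y)$: naturality of $e$ turns $T\pi_1\frW$, $T\pi_2\frW$, $T\psi(\frW)$ into $e_{TX}(\frx')$, $e_Y(y)$, $e_\V(\psi(\frx',y))$; the unit law $\xi\cdot e_\V=\Id$ deletes $\xi$; reflexivity gives $b(e_Y(y),y)\ge k$; and op-laxness of $e$ yields $\Txi\bar a(e_{TX}(\frx),e_{TX}(\frx'))\ge\bar a(\frx,\frx')$. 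What survives is $\bar a(\frx,\frx')\otimes\psi(\frx',y)\le\psi(\frx,y)$, which is exactly (M1) in the form $\psi\cdot\bar a\le\psi$.

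The main obstacle is the converse, deriving $(\star)$ from the full hypothesis (M1) and (M2). The strategy is a two-stage estimate: use (M2) first, in the form found above, to absorb the block $\xi(T\psi(\frW))\otimes b(T\pi_2\frW,y)$ into $\psi(m_X(T\pi_1\frW),y)$, and then use (M1) to cross the remaining $\Txi\bar a$-factor down to $\psi(\frx,y)$. The crucial sub-step is an inequality of the shape $\Txi\bar a(e_{TX}(\frx),\frX)\le\bar a(\frx,m_X(\frX))$, after which (M1) finishes via the instance $\frx'=m_X(\frX)$. Establishing this sub-step is where the genuine work lies: it draws on the associativity and unit laws of $\mT$, the $\BC$ condition (so that $\Txi$ composes and $\Txi\bar a$ may be related to $\bar a$ along $m$), naturality of $m$ and op-laxness of $e$ with respect to $\Txi$, and the compatibility of $\xi$ with $T$ encoded in the topological-theory axioms. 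With both correspondences in hand the equivalence closes: from (i) we obtain (M2), hence $|X|\otimes Y$-functoriality, and $(\star)$, hence $X^\op\otimes Y$-functoriality; conversely (ii) yields (M2) from $|X|\otimes Y$ and (M1) from $X^\op\otimes Y$, returning the module conditions.
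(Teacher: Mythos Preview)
The paper does not supply its own proof of this theorem; it is quoted from \cite{CH_Compl} and left unproved here, so there is nothing in the present text to compare your argument against directly. On its own merits your plan is sound: the pointwise translation of (M2) into the $|X|\otimes Y$-functoriality condition is exactly right, and your specialisation $\frW=e_{TX\times Y}(\frx',y)$ together with op-laxness of $e$ correctly recovers (M1) from the $X^\op\otimes Y$-condition.

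The only place where your write-up is still a sketch is the ``crucial sub-step'' $\Txi\bar a\bigl(e_{TX}(\frx),\frX\bigr)\le\bar a\bigl(\frx,m_X(\frX)\bigr)$. This is indeed the heart of the (i)$\Rightarrow(\star)$ direction, and it does go through with the ingredients you list. Concretely: from $\bar a=\Txi a\cdot m_X^\circ$ one has $\Txi\bar a=\Txi\Txi a\cdot(Tm_X)^\circ$. Using $1\le m_X^\circ\cdot m_X$ and naturality of $m$ in the form $m_X\cdot\Txi\Txi a=\Txi a\cdot m_{TX}$ gives
\[
\Txi\Txi a(\frY,\frX)\;\le\;\Txi a\bigl(m_{TX}(\frY),m_X(\frX)\bigr).
\]
Now if $Tm_X(\frY)=e_{TX}(\frx)$, then the monad associativity law $m_X\cdot m_{TX}=m_X\cdot Tm_X$ together with $m_X\cdot e_{TX}=1$ forces $m_X\bigl(m_{TX}(\frY)\bigr)=\frx$, so $m_{TX}(\frY)$ lies in $m_X^{-1}(\frx)$ and the right-hand side is bounded by $\bar a(\frx,m_X(\frX))$. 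Taking the supremum over such $\frY$ yields the desired inequality. With this in hand your two-stage estimate (first (M2), then (M1) via the sub-step) completes the proof cleanly. I would recommend writing out this computation explicitly rather than only naming the tools, since without it the argument is not yet a proof.
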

Therefore, each $\Tth$-module $\varphi:X\kmodto Y$ defines a $\Tth$-functor
\[
 \mate{\varphi}:Y\to\V^{|X|}
\]
which factors through the embedding $\hat{X}\hrw\V^{|X|}$, where $\hat{X}=\{\psi\in\V^{|X|}\mid \psi:X\kmodto G\}$.
\[
\xymatrix{Y\ar[r]^{\mate{\varphi}}\ar[dr]_ {\mate{\varphi}}& \V^{|X|}\\ & \hat{X}\ar@{_(->}[u]}
\]
In particular, for each $\Tth$-category $X=(X,a)$ we have $a:X\kmodto X$, and therefore obtain the \emph{Yoneda functor}
\[
 \yoneda_X=\mate{a}:X\to\hat{X}.
\]

\begin{theorem}\label{Yoneda}
Let $\psi:X\kmodto Z$ and $\varphi:X\kmodto Y$ be $\Tth$-modules. Then, for all $\frz\in TZ$ and $y\in Y$,
\[
\fspstr{T\mate{\psi}(\frz)}{\mate{\varphi}(y)}=(\varphi\homkleisliright\psi)(\frz,y).
\]
\end{theorem}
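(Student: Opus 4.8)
The plan is to bring both sides into the shape of a single infimum indexed by $\frw\in T(TX\times Z)$ with $T\pi_2(\frw)=\frz$, and then to identify the two infima term by term, the surjectivity needed for the matching being supplied by the Beck--Chevalley condition \BC. I would begin with the right-hand side. By definition $\varphi\homkleisliright\psi=\varphi\homcompright(\Txi\psi\cdot m_X^\circ)$; using the pointwise description of the extension in $\Mat{\V}$, the identity $(\Txi\psi\cdot m_X^\circ)(\frx,\frz)=\bigvee_{m_X(\frX)=\frx}\Txi\psi(\frX,\frz)$, and the fact that $\hom(-,v)$ sends suprema to infima, one rewrites
\[
(\varphi\homkleisliright\psi)(\frz,y)=\bigwedge_{\frX\in TTX}\hom(\Txi\psi(\frX,\frz),\varphi(m_X(\frX),y)).
\]
Inserting $\Txi\psi(\frX,\frz)=\bigvee\{\xi\cdot T\psi(\frw)\mid T\pi_1(\frw)=\frX,\ T\pi_2(\frw)=\frz\}$ and again converting the join into a meet amalgamates the two meets into one, so that
\[
(\varphi\homkleisliright\psi)(\frz,y)=\bigwedge_{\substack{\frw\in T(TX\times Z)\\ T\pi_2(\frw)=\frz}}\hom(\xi\cdot T\psi(\frw),\varphi(m_X(T\pi_1(\frw)),y)).
\]

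For the left-hand side I would use that the mates are currying, $\mate{\psi}(z)=\psi(-,z)$ and $\mate{\varphi}(y)=\varphi(-,y)$, so that the second argument in the formula for $\fspstr{-}{-}$ becomes $\mate{\varphi}(y)(m_X\cdot T\pi_1(\frq))=\varphi(m_X(T\pi_1(\frq)),y)$. Substituting $\frp=T\mate{\psi}(\frz)$ into that formula gives
\[
\fspstr{T\mate{\psi}(\frz)}{\mate{\varphi}(y)}=\bigwedge_{\substack{\frq\in T(TX\times\V^{|X|})\\ T\pi_2(\frq)=T\mate{\psi}(\frz)}}\hom(\xi\cdot T\ev(\frq),\varphi(m_X(T\pi_1(\frq)),y)).
\]

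The link between the two reduced expressions is the factorisation $\psi=\ev\cdot g$, where $g:=1_{TX}\times\mate{\psi}:TX\times Z\to TX\times\V^{|X|}$. From $\pi_1\cdot g=\pi_1$ and $\pi_2\cdot g=\mate{\psi}\cdot\pi_2$ I obtain $\xi\cdot T\ev\cdot Tg=\xi\cdot T\psi$, $T\pi_1\cdot Tg=T\pi_1$ and $T\pi_2\cdot Tg=T\mate{\psi}\cdot T\pi_2$. Hence, for every $\frw$, the $\frw$-term on the right equals the $Tg(\frw)$-term on the left, and $\frq=Tg(\frw)$ is admissible (i.e.\ $T\pi_2(\frq)=T\mate{\psi}(\frz)$) whenever $T\pi_2(\frw)=\frz$; this already yields ``$\le$''. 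The reverse inequality is the crux: I must show that every admissible $\frq$ arises as $Tg(\frw)$ for some admissible $\frw$. For this I would observe that
\[
\xymatrix{TX\times Z\ar[r]^-{\pi_2}\ar[d]_-{g} & Z\ar[d]^-{\mate{\psi}}\\ TX\times\V^{|X|}\ar[r]_-{\pi_2} & \V^{|X|}}
\]
is a pullback in $\SET$, being the pullback of the product projection $\pi_2$ along $\mate{\psi}$; hence by \BC\ the functor $T$ carries it to a weak pullback. The weak universal property then supplies, for each $\frq$ with $T\pi_2(\frq)=T\mate{\psi}(\frz)$, some $\frw\in T(TX\times Z)$ with $Tg(\frw)=\frq$ and $T\pi_2(\frw)=\frz$. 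Since the corresponding terms are equal, the two infima range over the same family of values and coincide.

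I expect this last step --- the surjectivity of $Tg$ onto the admissible fibre --- to be the only genuine obstacle, and it is precisely what \BC\ is there to deliver. I note that the argument is purely relational: it invokes the definition of the mates, of $\Txi$ and of the extension together with \BC, but nowhere the module axioms on $\varphi$ and $\psi$, which is consistent with the identity being stated as a pointwise equality.
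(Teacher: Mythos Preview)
Your proof is correct and follows essentially the same route as the paper: both reduce the left-hand side via the formula for $\fspstr{-}{-}$ on $\V^{|X|}$, factor $\psi=\ev\cdot(1_{TX}\times\mate{\psi})$, and use that the square with $g=1_{TX}\times\mate{\psi}$ over $\pi_2$ is a pullback so that \BC\ yields the needed surjectivity onto the admissible fibre. The only cosmetic difference is that you collapse the nested infima into a single one indexed by $\frw$ right away, whereas the paper keeps the stratification by $\frx\in TX$ and $\frX\in TTX$ until the end; the key step is identical.
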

\begin{proof}
First note that the diagrams
\begin{align*}
\xymatrix{ & \V\\ TX\times Z\ar[r]_-{1_{TX}\times\mate{\psi}}\ar[ur]^\psi & TX\times\hat{X}\ar[u]_\ev} &&
\xymatrix{TX\times Z\ar[r]^-{1_{TX}\times\mate{\psi}}\ar[d]_{\pi_2} & TX\times\hat{X}\ar[d]^{\pi_2}\\ Z\ar[r]_{\mate{\psi}} & \hat{X}}
\end{align*}
commute, where the right hand side diagram is even a pullback. Then, for $\frz\in TZ$ and $y\in Y$, we have
\begin{align*}
\fspstr{T\mate{\psi}(\frz)}{\mate{\varphi}(y)}
&=\bigwedge_{\substack{\frW\in T(TX\times\hat{X})\\ \frW\mapsto T\mate{\psi}(\frz)}}\hom(\xi\cdot T\ev(\frW),\varphi(m_X\cdot T\pi_1(\frW),y))\\
&=\bigwedge_{\frx\in TX}\bigwedge_{\substack{\frX\in TTX\\ m_X(\frX)=\frx}}\bigwedge_{\substack{\frW\in T(TX\times\hat{X})\\ \frW\mapsto T\mate{\psi}(\frz),\frX}}\hom(\xi\cdot T\ev(\frW),\varphi(\frx,y))\\
&=\bigwedge_{\frx\in TX}\bigwedge_{\substack{\frX\in TTX\\ m_X(\frX)=\frx}}\hom(\bigvee_{\substack{\frW\in T(TX\times Z)\\ \frW\mapsto \frz,\frX}}\xi\cdot T\psi(\frW),\varphi(\frx,y))\\
&=\bigwedge_{\frx\in TX}\hom(\bigvee_{\substack{\frX\in TTX\\ m_X(\frX)=\frx}}\Txi\psi(\frX,\frz),\varphi(\frx,y))\\
&=\bigwedge_{\frx\in TX}\hom(\Txi\psi\cdot m_X^\circ(\frx,\frz),\varphi(\frx,y))\\
&=\varphi\homcompright(\Txi\psi\cdot m_X^\circ)(\frz,y)=(\varphi\homkleisliright\psi)(\frz,y).\qedhere
\end{align*}
\end{proof}
Choosing in particular $\psi=a:X\kmodto X$ and $Y=G$, we obtain the ``usual'' \emph{Yoneda lemma} (see also \cite{CH_Compl}).
\begin{corollary}\label{YonedaLem}
For each $\varphi\in\hat{X}$ and each $\frx\in TX$, $\varphi(\frx)=\fspstr{T\yoneda_X(\frx)}{\varphi}$, that is,$(\yoneda_X)_*:X\kmodto\hat{X}$ is given by the evaluation map $\ev:TX\otimes\hat{X}\to\V$. As a consequence, $\yoneda_X:X\to\hat{X}$ is fully faithful.
\end{corollary}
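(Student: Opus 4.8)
The plan is to read off the corollary as the special case of Theorem~\ref{Yoneda} obtained by taking $\psi=a:X\kmodto X$ (so that $Z=X$) and $Y=G=(1,e_1^\circ)$, with $\varphi:X\kmodto G$ an arbitrary element of $\hat X$. Under these choices $\mate{\psi}=\mate{a}=\yoneda_X$ by the very definition of the Yoneda functor, while $\mate{\varphi}:G\to\hat X$ is a $\Tth$-functor out of the one-point $\Tth$-category; evaluating at its unique point $\ast$ returns $\mate{\varphi}(\ast)=\varphi$ itself, regarded inside $\V^{|X|}$. Hence, for every $\frx\in TX$, the left-hand side of the theorem becomes $\fspstr{T\yoneda_X(\frx)}{\varphi}$, which is exactly the quantity to be computed.

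It then remains to simplify the right-hand side $(\varphi\homkleisliright a)(\frx,\ast)$. The crucial point is that $a:X\kmodto X$ is the identity $\Tth$-module on $X$ for the Kleisli convolution, so that $-\kleisli a$ is the identity endofunctor on $\Mod{\Tth}(X,G)$; by Remark~\ref{AdjKleisli} its right adjoint $(-)\homkleisliright a$ is then the identity as well, whence $\varphi\homkleisliright a=\varphi$. (One may also verify this directly on $\V$-relations: by definition $\varphi\homkleisliright a=\varphi\homcompright(\Txi a\cdot m_X^\circ)$, and writing $t=\Txi a\cdot m_X^\circ$, the module law $\varphi\cdot t=\varphi\kleisli a=\varphi$ gives $\varphi\le\varphi\homcompright t$, while $t\ge 1_{TX}$, which follows from reflexivity $e_X^\circ\le a$ together with $m_X\cdot Te_X=1_{TX}$, gives the reverse inequality.) Substituting back yields $\fspstr{T\yoneda_X(\frx)}{\varphi}=\varphi(\frx)$, the asserted identity. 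The description of $(\yoneda_X)_*$ is then immediate: since $\hat X$ inherits the structure $\fspstr{-}{-}$ from $\V^{|X|}$, we have $(\yoneda_X)_*(\frx,\varphi)=\fspstr{T\yoneda_X(\frx)}{\varphi}=\varphi(\frx)=\ev(\frx,\varphi)$.

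For the concluding claim I would feed the representables back into the formula. For fixed $x\in X$ the element $\yoneda_X(x)=\mate{a}(x)\in\hat X$ is the presheaf $\frx\mapsto a(\frx,x)$, so applying the identity just obtained with $\varphi=\yoneda_X(x)$ gives
\[
\fspstr{T\yoneda_X(\frx)}{\yoneda_X(x)}=\yoneda_X(x)(\frx)=a(\frx,x)
\]
for all $\frx\in TX$. Since the left-hand side is the value of the $\Tth$-category structure of $\hat X$ at $(T\yoneda_X(\frx),\yoneda_X(x))$, this is exactly the equality required for $\yoneda_X:X\to\hat X$ to be fully faithful.

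The only genuinely delicate step I anticipate is the identity $\varphi\homkleisliright a=\varphi$, that is, recognising that right extension along the identity $\Tth$-module leaves a module unchanged; everything else is bookkeeping about the substitutions $Z=X$, $Y=G$ and the identifications $\mate{a}=\yoneda_X$ and $\mate{\varphi}(\ast)=\varphi$.
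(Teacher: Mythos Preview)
Your proposal is correct and follows exactly the approach the paper intends: it derives the corollary as the special case $\psi=a$, $Y=G$ of Theorem~\ref{Yoneda}, which is all the paper says. You in fact supply more detail than the paper does, in particular the justification of $\varphi\homkleisliright a=\varphi$ (either via the adjunction argument or the direct computation with $t=\Txi a\cdot m_X^\circ\ge 1_{TX}$), which the paper leaves to the reader.
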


\section{Cocomplete $\Tth$-categories}

\subsection{$\Cat{\Tth}$ as an ordered category}
We can transport the order-structure on hom-sets from $\Mod{\Tth}$ to $\Cat{\Tth}$ via the functor $(-)^*:\Cat{\Tth}^\op\to\Mod{\Tth}$, that is, we define $f\le g$ whenever $f^*\le g^*$. Clearly, we have $f\le g$ if and only if $g_*\le f_*$. With this definition we turn $\Cat{\Tth}$ into a 2-category, and therefore the (representable) forgetful functor $\ForgetSET:\Cat{\Tth}\to\SET$ factors through $\ForgetSET:\Cat{\Tth}\to\ORD$. As usual, we call $\Tth$-functors $f,g:X\to Y$ \emph{equivalent}\index{functor@$\Tth$-functor!equivalent -s}, and write $f\cong g$, if $f\le g$ and $g\le f$. Hence, $f\cong g$ if and only if $f^*=g^*$, which in turn is equivalent to $f_*=g_*$. We call a $\Tth$-category $X$ \emph{L-separated} (see \cite{HT_LCls} for details) whenever $f\cong g$ implies $f=g$, for all $\Tth$-functors $f,g:Y\to X$ with codomain $X$. The $\Tth$-category $\V=(\V,\hom_\xi)$ is L-separated, and so is each $\Tth$-category of the form $\hat{X}$, for a $\Tth$-category $X$. The full subcategory of $\Cat{\Tth}$ consisiting of all L-separated $\Tth$-categories is denoted by $\CatSep{\Tth}$. A $\Tth$-category $X$ is called \emph{injective} if, for all $\Tth$-functors $f:A\to X$ and fully faithful $\Tth$-functors $i:A\to B$, there exists a $\Tth$-functor $g:B\to X$ such that $g\cdot i\cong f$. Clearly, for a L-separated $\Tth$-category $X$ we have then $g\cdot i= f$.
\begin{lemma}\label{OrdTFun}
The following assertions hold.
\begin{enumerate}
\item Let $f,g:X\to Y$ be $\Tth$-functors between $\Tth$-categories $X=(X,a)$ and $Y=(Y,b)$. Then
\[
f\le g\iff \forall x\in X\;.\;k\le b(e_Y(f(x)),g(x)).
\]
In particular, for $\Tth$-functors $f,g:Y\to\V^{|X|}$ we have
\[
 f\le g\iff \forall y\in Y,\frx\in TX\;.\;f(y)(\frx)\le g(y)(\frx).
\]
\item A $\Tth$-category $X$ is L-separated if and only if the underlying $\V$-category $\ForgetToV X$ is L-separated.
\item With $X$ also $\ForgetToV X$ is injective with respect to fully faithful functors, for each $\Tth$-category $X$.
\end{enumerate}
\end{lemma}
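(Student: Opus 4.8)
The plan is to establish item (1) first and then read off (2) and (3) from it. Throughout I write $a,b$ for the structures of $X=(X,a)$, $Y=(Y,b)$, and recall that by definition $f\le g$ means $f^*\le g^*$, i.e. $b(\fry,f(x))\le b(\fry,g(x))$ for all $\fry\in TY$ and $x\in X$. For the forward implication of (1) I would instantiate $\fry:=e_Y(f(x))$ and use reflexivity $e_Y^\circ\le b$ of $Y$, giving $k\le b(e_Y(f(x)),f(x))\le b(e_Y(f(x)),g(x))$. For the converse I would recover the full relational inequality from the pointwise one by a single Kleisli-type composition. Concretely, I would first evaluate the op-lax naturality $e_Y\circ b\le\Txi b\circ e_{TY}$ of $e$ applied to $b:TY\relto Y$ at the point $(\fry,e_Y(f(x)))$, obtaining $b(\fry,f(x))\le\Txi b(e_{TY}(\fry),e_Y(f(x)))$; then I would use transitivity $b\kleisli b\le b$ of $Y$ in the pointwise form $\Txi b(\frY,\frw)\otimes b(\frw,y)\le b(m_Y(\frY),y)$ with $\frY:=e_{TY}(\fry)$, $\frw:=e_Y(f(x))$, $y:=g(x)$. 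Since $m_Y\cdot e_{TY}=1_{TY}$ and $b(e_Y(f(x)),g(x))\ge k$ by hypothesis, this collapses to $\Txi b(e_{TY}(\fry),e_Y(f(x)))\le b(\fry,g(x))$, and chaining the two displayed inequalities yields $b(\fry,f(x))\le b(\fry,g(x))$, as required. The ``in particular'' clause for $f,g:Y\to\V^{|X|}$ then follows by feeding this criterion into the explicit description of $\fspstr{e_{\V^{|X|}}(\varphi)}{\psi}=\bigwedge_{\frx\in TX}\hom(\varphi(\frx),\psi(\frx))$ recorded above, together with the adjunction property $k\le\hom(u,v)\iff u\le v$.

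For item (2) I would first extract from (1) an intrinsic description of L-separation. Applying (1) to both $f\le g$ and $g\le f$, two $\Tth$-functors $f,g:Y\to X$ satisfy $f\cong g$ exactly when $k\le a(e_X(f(y)),g(y))$ and $k\le a(e_X(g(y)),f(y))$ for all $y$. Testing this against the free $\Tth$-category $G=(1,e_1^\circ)$ on a point, whose $\Tth$-functors into $X$ are precisely the points of $X$, shows that $X$ is L-separated if and only if, for all $x,x'\in X$, the conjunction $k\le a(e_X(x),x')$ and $k\le a(e_X(x'),x)$ forces $x=x'$. The very same argument, specialised to $\V$-categories, characterises L-separation of $\ForgetToV X=(X,a\cdot e_X)$ by the identical condition, since its hom is $(a\cdot e_X)(x,x')=a(e_X(x),x')$. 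Thus the two separation conditions coincide verbatim, proving the equivalence.

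For item (3) I would argue by transposing along the adjunction $\ForgetToVAd\dashv\ForgetToV$. Given a fully faithful $\V$-functor $i:P\to Q$ and a $\V$-functor $h:P\to\ForgetToV X$, let $\bar h:\ForgetToVAd P\to X$ be its transpose. The key lemma is that $\ForgetToVAd$ preserves fully faithful functors: writing the structure of $\ForgetToVAd P=(P,e_P^\circ\cdot\Txi r_P)$ as $(e_P^\circ\cdot\Txi r_P)(\frx,x')=\Txi r_P(\frx,e_P(x'))$, full faithfulness of $i$ reads $r_P=i^\circ\cdot r_Q\cdot i$, and applying the $2$-functor $\Txi$ (with $\Txi i=Ti$ and $\Txi(i^\circ)=(Ti)^\circ$) together with naturality $Ti\cdot e_P=e_Q\cdot i$ turns this into $\Txi r_P(\frx,e_P(x'))=\Txi r_Q(Ti(\frx),e_Q(i(x')))$, i.e. $\ForgetToVAd i$ is fully faithful. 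Injectivity of $X$ then supplies a $\Tth$-functor $\bar g:\ForgetToVAd Q\to X$ with $\bar g\cdot\ForgetToVAd i\cong\bar h$; transposing $\bar g$ back to $g:Q\to\ForgetToV X$ and using that the adjunction is order-enriched (both $\ForgetToVAd$ and $\ForgetToV$ are locally monotone, which is immediate from the criterion in (1) and op-laxness of $e$) transports the equivalence to $g\cdot i\cong h$, establishing injectivity of $\ForgetToV X$.

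The main obstacle is the converse direction of (1): reconstructing $f^*\le g^*$ from the pointwise data requires choosing the auxiliary witness $\frY=e_{TY}(\fry)$ and combining op-laxness of $e$ with transitivity of $b$ in exactly the right order. Once (1) is in place, both (2) and (3) are essentially formal, the only further input being that $\ForgetToVAd$ preserves full faithfulness.
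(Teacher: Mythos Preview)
Your proposal is correct and follows essentially the same route as the paper. The paper defers (1) to \cite{HT_LCls}, says (2) is immediate from (1), and for (3) invokes exactly the two ingredients you isolate: that $\ForgetToV$ is a 2-functor and that $\ForgetToVAd$ preserves fully faithful functors. Your contribution is to actually supply the argument for (1) (your use of op-lax naturality of $e$ together with transitivity of $b$ and $m_Y\cdot e_{TY}=1_{TY}$ is the standard one) and to spell out why the pointwise criterion in (1) makes the transposition along $\ForgetToVAd\dashv\ForgetToV$ order-preserving; the paper compresses this into the phrase ``$\ForgetToV$ is a 2-functor''.
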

\begin{proof}
(1) can be found in \cite{HT_LCls}, (2) follows immediately from (1), and (3) follows from the facts that $\ForgetToV:\Cat{\Tth}\to\Cat{\V}$ is actually a 2-functor and it's left adjoint $\ForgetToVAd:\Cat{\V}\to\Cat{\Tth}$ sends fully faithful $\V$-functors to fully faithful $\Tth$-functors.
\end{proof}
One of the most important concepts in a 2-category is that of \emph{adjointness}. Here, a $\Tth$-functor $f:X\to Y$ is \emph{left adjoint} if there exists a $\Tth$-functor $g:Y\to X$ such that $1_X\le g\cdot f$ and $1_Y\ge f\cdot g$. Passing to $\Mod{\Tth}$, $f$ is left adjoint to $g$ if and only if $g_*\dashv f_*$, that is, if and only if $f_*=g^*$. Bearing in mind Lemma \ref{Modules}, we have
\begin{proposition}
A $\Tth$-functor $f:X\to Y$ is left adjoint if and only if there exists a function $g:Y\to X$ such that $f_*=g^*$, that is,
\[
 b(Tf(\frx),y)=a(\frx,g(y),
\]
for all $\frx\in TX$ and $y\in Y$.
\end{proposition}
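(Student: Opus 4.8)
The plan is to reduce everything to the criterion recalled immediately before the statement---that for $\Tth$-functors $f$ and $g$ one has $f\dashv g$ if and only if $g_*\dashv f_*$ in $\Mod{\Tth}$, equivalently $f_*=g^*$---together with Lemma \ref{Modules}. The only genuinely new feature of the present statement is that $g$ is assumed to be a mere function rather than a $\Tth$-functor, so the core of the argument will be to show that any function $g:Y\to X$ with $f_*=g^*$ is automatically a $\Tth$-functor.

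I would dispatch the ``only if'' direction first, where there is essentially nothing to prove: if $f$ is left adjoint then by definition there is a $\Tth$-functor $g:Y\to X$ with $1_X\le g\cdot f$ and $f\cdot g\le 1_Y$, and the recalled criterion rephrases this as $f_*=g^*$; in particular $g$ is a function of the desired kind. The pointwise reading $b(Tf(\frx),y)=a(\frx,g(y))$ is then just the unwinding of $f_*(\frx,y)=b(Tf(\frx),y)$ and $g^*(\frx,y)=a(\frx,g(y))$.

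For the ``if'' direction I would start from a function $g:Y\to X$ with $f_*=g^*$. Since $f$ is a $\Tth$-functor, Lemma \ref{Modules} gives that $f_*$ is a $\Tth$-module $f_*:X\kmodto Y$; as $g^*=f_*$ as $\Tth$-relations $X\krelto Y$, this says exactly that $g^*$ is a $\Tth$-module, and applying the equivalence between ``$g$ is a $\Tth$-functor'' and ``$g^*$ is a $\Tth$-module'' from Lemma \ref{Modules} to $g$ itself shows that $g$ is a $\Tth$-functor. With $g$ now known to be a $\Tth$-functor and $f_*=g^*$ in hand, the recalled criterion immediately yields $f\dashv g$, and hence $f$ is left adjoint.

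The one delicate point---and the only place where replacing ``$\Tth$-functor $g$'' by ``function $g$'' is felt---is the automatic functoriality of $g$. This is precisely what Lemma \ref{Modules} buys us: the condition that $g^*$ be a $\Tth$-module is equivalent to $g$ being a $\Tth$-functor, and the hypothesis $f_*=g^*$ supplies this condition for free, because $f_*$ is already a module. Once this observation is in place, no further computation remains.
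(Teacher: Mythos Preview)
Your proof is correct and follows exactly the approach indicated in the paper. The paper does not spell out a proof but merely writes ``Bearing in mind Lemma~\ref{Modules}, we have'' before stating the proposition; your argument is precisely the intended unpacking of this reference, using (i)$\Leftrightarrow$(ii) of Lemma~\ref{Modules} for $f$ to see that $f_*=g^*$ is a $\Tth$-module, and then (iii)$\Leftrightarrow$(i) for $g$ to deduce that $g$ is a $\Tth$-functor.
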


\subsection{Cocomplete $\Tth$-categories}
Let now $X=(X,a)$ be a $\Tth$-category. Given a $\Tth$-functor $h:Y\to X$ and a \emph{weight} $\psi:Y\kmodto Z$ in $\Mod{\Tth}$,
\[
\xymatrix{Y\ar@{-^{>}}|-{\object@{o}}[r]^{h_*}\ar@{-^{>}}|-{\object@{o}}[d]_\psi & X\\ Z\ar@{.^{>}}|-{\object@{o}}[ur]_{h_*\homkleisliright\psi}} 
\]
we call a $\Tth$-functor $g:Z\to X$ \emph{a $\psi$-weighted colimit of $h$}, and  write $g\cong\colim(\psi,h)$, if $g$ represents $h_*\homkleisliright\psi$, i.e.\ if $h_*\homkleisliright\psi=g_*$. Clearly, if such $g$ exists, it is unique up to equivalence and therefore we call  $g$  ``the'' $\psi$-weighted colimit of $h$. We say that a $\Tth$-functor $f:X\to Y$ \emph{preserves the $\psi$-weighted colimit of $h$} if $f\cdot\colim(\psi,h)\cong\colim(\psi,f\cdot h)$, that is, if $(f\cdot g)_*=(f\cdot h)_*\homkleisliright\psi$. A $\Tth$-functor $f:X\to Y$ is \emph{cocontinuous} if $f$ preserves all weighted colimits which exist in $X$, and a $\Tth$-category $X$ is \emph{cocomplete} if each ``weighted diagram'' has a colimit in $X$. A straightforward calculation shows that we only need to consider $f=1_X$.
\begin{lemma}
Let $f:Y\to X$ be a $\Tth$-functor and $\psi:Y\kmodto Z$ be a $\Tth$-module. Then $\colim(\psi,f)\cong\colim(\psi\circ f^*,1_X)$. In particular, $X$ is cocomplete if and only if $1_X^*\homkleisliright\psi$ is representable by some $\Tth$-functor $g:Z\to X$, for each $\Tth$-module $\psi:X\kmodto Z$. Furthermore, a $\Tth$-functor $f:X\to Y$ is cocontinuous if and only if $f$ preserves all $\psi$-weighted colimits of $1_X$.
\end{lemma}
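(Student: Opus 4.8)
The plan is to derive everything from the single module identity
\[
f_*\homkleisliright\psi = a\homkleisliright(\psi\kleisli f^*),
\]
where $X=(X,a)$ and where I keep in mind that $f:Y\to X$, so that $f_*:Y\kmodto X$ and $f^*:X\kmodto Y$, while $\psi:Y\kmodto Z$ and both sides above are $\Tth$-modules $Z\kmodto X$. Granting this identity, the equivalence $\colim(\psi,f)\cong\colim(\psi\kleisli f^*,1_X)$ is immediate: by definition $\colim(\psi,f)$ is a $\Tth$-functor representing $f_*\homkleisliright\psi$ and $\colim(\psi\kleisli f^*,1_X)$ one representing $(1_X)_*\homkleisliright(\psi\kleisli f^*)=a\homkleisliright(\psi\kleisli f^*)$; since these two modules agree, one colimit exists exactly when the other does, and then the representing functors are equivalent.

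To prove the identity I would argue by universal properties, using only the adjunctions $-\kleisli\chi\dashv(-)\homkleisliright\chi$ from Remark \ref{AdjKleisli} and $f_*\dashv f^*$ in $\Mod{\Tth}$ from Lemma \ref{Modules}. Both sides lie in $\Mod{\Tth}(Z,X)$, so it suffices to check that, for every $\mu:Z\kmodto X$, one has $\mu\le f_*\homkleisliright\psi$ iff $\mu\le a\homkleisliright(\psi\kleisli f^*)$, which forces equality. Transposing each side along the respective adjunction (and using associativity of Kleisli convolution to rewrite $\mu\kleisli(\psi\kleisli f^*)=\mu\kleisli\psi\kleisli f^*$), this reduces to the single equivalence
\[
\mu\kleisli\psi\le f_* \iff \mu\kleisli\psi\kleisli f^*\le a,
\]
where all the real work sits. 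For ``$\Rightarrow$'' I compose on the right with $f^*$ and use the counit $f_*\kleisli f^*\le a$ of $f_*\dashv f^*$; for ``$\Leftarrow$'' I use its unit $b\le f^*\kleisli f_*$ (with $b$ the structure of $Y$) together with the unit law $a\kleisli f_*=f_*$, writing $\mu\kleisli\psi=\mu\kleisli\psi\kleisli b\le(\mu\kleisli\psi\kleisli f^*)\kleisli f_*\le a\kleisli f_*=f_*$. I expect the only obstacle to be bookkeeping: because the diagram runs $f:Y\to X$ rather than into $Y$, the unit and counit of $f_*\dashv f^*$ appear in the mirror image of their familiar roles, so one must keep the variances and the two identity modules $a$ (on $X$) and $b$ (on $Y$) on the correct side. (Equivalently, the identity follows formally from associativity and the relation $-\kleisli f_*=(-)\homkleisliright f^*$ supplied by $f_*\dashv f^*$.)

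For the ``in particular'' clause I would observe that, by the identity, every weighted colimit $\colim(\varphi,h)$ of a $\Tth$-functor $h:W\to X$ along $\varphi:W\kmodto Z$ satisfies $\colim(\varphi,h)\cong\colim(\varphi\kleisli h^*,1_X)$, which exists iff $1_X^*\homkleisliright(\varphi\kleisli h^*)$ is representable. As $h$ and $\varphi$ vary, $\varphi\kleisli h^*$ ranges over precisely the $\Tth$-modules $X\kmodto Z$: every such $\psi$ already arises with $h=1_X$ and $\varphi=\psi$, since $\psi\kleisli 1_X^*=\psi\kleisli a=\psi$. Hence $X$ is cocomplete if and only if $1_X^*\homkleisliright\psi$ is representable for each module $\psi:X\kmodto Z$.

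For the cocontinuity clause one implication is trivial, colimits of $1_X$ being a special case of weighted diagrams. For the converse I would set $\psi=\varphi\kleisli h^*$ and apply the identity on both sides. On the source it gives $\colim(\varphi,h)\cong\colim(\psi,1_X)$; on the target, using $(f\cdot h)^*=h^*\kleisli f^*$ we get $\varphi\kleisli(f\cdot h)^*=\psi\kleisli f^*$, whence $\colim(\varphi,f\cdot h)\cong\colim(\psi\kleisli f^*,1_Y)\cong\colim(\psi,f)$. Thus the preservation condition $f\cdot\colim(\varphi,h)\cong\colim(\varphi,f\cdot h)$ for an arbitrary weighted colimit becomes verbatim the condition $f\cdot\colim(\psi,1_X)\cong\colim(\psi,f)$ that $f$ preserve the $\psi$-weighted colimit of $1_X$; since every $\psi$ occurs this way, the two notions of cocontinuity coincide.
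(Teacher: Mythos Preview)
Your argument is correct and is precisely the ``straightforward calculation'' the paper alludes to but omits. The key module identity $f_*\homkleisliright\psi=a\homkleisliright(\psi\kleisli f^*)$ also falls out in one line from the paper's Remark~\ref{AdjKleisli}: since $f_*\dashv f^*$ gives $(-)\homkleisliright f^*=(-)\kleisli f_*$, one has $a\homkleisliright(\psi\kleisli f^*)=\bigl(a\homkleisliright f^*\bigr)\homkleisliright\psi=(a\kleisli f_*)\homkleisliright\psi=f_*\homkleisliright\psi$; your elementwise verification via the unit and counit of $f_*\dashv f^*$ is the unpacked form of the same computation.
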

\begin{remark}
When studying $\V$-categories, one can go even one step further and show that cocompleteness reduces to the case $Z=G$. More precise, a $\V$-category $X$ is cocomplete if and only if $(1_X)^*\homcompright\psi$ is representable by some $\V$-functor, for each $\V$-module $\psi:X\modto G$. However, for a general theory $\Tth$ I am not able to prove this.
\end{remark}
We let $\Cocompl{\Tth}$ denote the 2-category of all cocomplete $\Tth$-categories and left adjoint $\Tth$-functors between them. Correspondingly, $\CocomplSep{\Tth}$ denotes the full subcategory of $\Cocompl{\Tth}$ consisting of all L-separated cocomplete $\Tth$-categories.
\begin{proposition}\label{SomeColims}
The following assertions hold.
\begin{enumerate}
\item Each $\mate{\psi}\in\hat{X}$ is a colimit of represantables. More precisely, we have $\yoneda_*\homkleisliright\psi={\mate{\psi}}_*$.
\[
\xymatrix{X\ar@{-^{>}}|-{\object@{o}}[r]^{\yoneda_*}\ar@{-^{>}}|-{\object@{o}}[d]_\psi & \hat{X}\\ G\ar@{.^{>}}|-{\object@{o}}[ur]_{\yoneda_*\homkleisliright\psi}} 
\]
\item A left adjoint $\Tth$-functor $f:X\to Y$ between $\Tth$-categories is cocontinuous.
\end{enumerate}
\end{proposition}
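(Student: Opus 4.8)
The plan for (1) is to read off the identity $\yoneda_*\homkleisliright\psi={\mate{\psi}}_*$ as an immediate instance of the Yoneda theorem. By Corollary \ref{YonedaLem}, $\yoneda_*=(\yoneda_X)_*$ is the evaluation module $\ev:X\kmodto\hat{X}$, and $\hat{X}$ sits inside $\V^{|X|}$ as a full sub-$\Tth$-category, so its structure $b$ is the restriction of $\fspstr{-}{-}$. I would apply Theorem \ref{Yoneda} taking its weight to be the given $\psi:X\kmodto G$ and taking $\varphi:=\ev:X\kmodto\hat{X}$. The key observation is that $\mate{\ev}:\hat{X}\to\V^{|X|}$ is nothing but the inclusion, since $\mate{\ev}(\varphi)(\frx)=\ev(\frx,\varphi)=\varphi(\frx)$. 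With this choice, Theorem \ref{Yoneda} reads, for $\frz\in T1$ and $\varphi\in\hat{X}$,
\[
(\ev\homkleisliright\psi)(\frz,\varphi)=\fspstr{T\mate{\psi}(\frz)}{\varphi},
\]
and the right-hand side is exactly ${\mate{\psi}}_*(\frz,\varphi)=b(T\mate{\psi}(\frz),\varphi)$. This yields $\yoneda_*\homkleisliright\psi={\mate{\psi}}_*$, that is, $\mate{\psi}\cong\colim(\psi,\yoneda_X)$, as claimed.

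For (2) I would prove preservation of an arbitrary weighted colimit directly, without first reducing to the case $h=1_X$. So suppose $f:X\to Y$ is left adjoint and that $g\cong\colim(\psi,h)$ exists for a $\Tth$-functor $h$ with codomain $X$ and a weight $\psi$, that is, $h_*\homkleisliright\psi=g_*$. Using functoriality of $(-)_*$ I would write
\[
(f\cdot g)_*=f_*\kleisli g_*=f_*\kleisli(h_*\homkleisliright\psi).
\]
Since $f$ is left adjoint, passing to $\Mod{\Tth}$ shows that $f_*$ is a right adjoint $\Tth$-module, and therefore Lemma \ref{AdjModLem} applies with $\alpha:=f_*$, giving
\[
f_*\kleisli(h_*\homkleisliright\psi)=(f_*\kleisli h_*)\homkleisliright\psi=(f\cdot h)_*\homkleisliright\psi.
\]
Combining the two displays produces $(f\cdot g)_*=(f\cdot h)_*\homkleisliright\psi$, which is precisely the statement that $f$ preserves this colimit; as the colimit was arbitrary, $f$ is cocontinuous.

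In both parts the actual computation is short, and the content lies entirely in correctly instantiating the two earlier results. For (1) the delicate point will be the bookkeeping of variables so that Theorem \ref{Yoneda} is used with $\varphi=\ev$, together with the identifications $\mate{\ev}=\textnormal{incl}$ and that $b$ is the restriction of $\fspstr{-}{-}$ to $\hat{X}$; once these are in place the equality is forced. For (2) the crux, and the only place where the hypothesis is really used, is the passage from ``$f$ is a left adjoint $\Tth$-functor'' to ``$f_*$ is a right adjoint $\Tth$-module'', which is exactly what is needed to invoke Lemma \ref{AdjModLem}. I expect this last identification to be the main thing one must get right, the rest being formal manipulation of Kleisli convolution and extensions.
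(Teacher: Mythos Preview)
Your proof is correct and follows the same approach as the paper. For (1) both you and the paper instantiate Theorem~\ref{Yoneda} with the given $\psi:X\kmodto G$ and with $\varphi=(\yoneda_X)_*=\ev$, using that $\mate{\ev}$ is the inclusion $\hat{X}\hookrightarrow\V^{|X|}$ so that $\fspstr{T\mate{\psi}(\fra)}{h}={\mate{\psi}}_*(\fra,h)$; for (2) both proofs invoke Lemma~\ref{AdjModLem} with $\alpha=f_*$, which is right adjoint in $\Mod{\Tth}$ because $f$ is left adjoint in $\Cat{\Tth}$. One cosmetic point: in part (1) you use the symbol $\varphi$ both for the module $\ev$ (``taking $\varphi:=\ev$'') and, a few lines later, for a generic element of $\hat{X}$; rename one of them to avoid confusion.
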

\begin{proof}
(1)\hspace{1ex} Let $\fra\in T1$ and $h\in\hat{X}$. Then, by Theorem \ref{Yoneda},
\[
(\yoneda_*\homkleisliright\psi)(\fra,h)=\fspstr{T\mate{\psi}(\fra)}{h}={\mate{\psi}}_*(\fra,h).
\]
(2)\hspace{1ex} Let $h:A\to X$ be in $\Cat{\Tth}$, $\psi:A\kmodto B$ in $\Mod{\Tth}$, and $g\cong\colim(\psi,h)$. Then, since $f_*$ is a right adjoint $\Tth$-module, from Lemma \ref{AdjModLem} we deduce
\[
(f\cdot h)_*\homkleisliright\psi
=f_*\kleisli (h_*\homkleisliright\psi)
=f_*\kleisli g_*=(f\cdot g)_*.\qedhere
\]
\end{proof}
\begin{theorem}\label{CharCocompl}
Let $X=(X,a)$ be a $\Tth$-category. The following assertions are equivalent.
\begin{eqcond}
\item $X$ is injective.
\item $\yoneda_X:X\to\hat{X}$ has a left inverse, i.e.\ there exists a $\Tth$-functor $\Sup_X:\hat{X}\to X$ such that $\Sup_X\cdot\yoneda_X\cong 1_X$.
\item $\yoneda_X:X\to\hat{X}$ has a left adjoint $\Sup_X:\hat{X}\to X$.
\item $X$ is cocomplete.
\end{eqcond}
\end{theorem}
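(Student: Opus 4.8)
The plan is to run the cycle $(i)\Rw(ii)\Rw(iii)\Rw(iv)\Rw(i)$, using throughout that $\yoneda_X\colon X\to\hat X$ is fully faithful (Corollary \ref{YonedaLem}) and hence an order-embedding in the 2-category $\Cat{\Tth}$. For $(i)\Rw(ii)$ I would simply apply injectivity of $X$ to the identity $1_X\colon X\to X$ along the fully faithful $\Tth$-functor $\yoneda_X\colon X\to\hat X$; the resulting filler is a $\Tth$-functor $\Sup_X\colon\hat X\to X$ with $\Sup_X\cdot\yoneda_X\cong 1_X$, which is exactly $(ii)$.

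For $(ii)\Rw(iii)$ the equivalence $\Sup_X\cdot\yoneda_X\cong 1_X$ already provides the unit $\Sup_X\cdot\yoneda_X\le 1_X$, so it remains to produce the counit $1_{\hat X}\le\yoneda_X\cdot\Sup_X$. By Lemma \ref{OrdTFun}(1) (for functors into $\V^{|X|}$) this reduces to the pointwise estimate $\psi(\frx)\le a(\frx,\Sup_X(\psi))$ for all $\psi\in\hat X$ and $\frx\in TX$, which I would obtain from the chain
\begin{align*}
\psi(\frx)=\fspstr{T\yoneda_X(\frx)}{\psi}
&\le a(T\Sup_X(T\yoneda_X(\frx)),\Sup_X(\psi))\\
&=a(T(\Sup_X\cdot\yoneda_X)(\frx),\Sup_X(\psi))\le a(\frx,\Sup_X(\psi)),
\end{align*}
where the first equality is the Yoneda lemma (Corollary \ref{YonedaLem}), the middle inequality is the defining inequality of the $\Tth$-functor $\Sup_X$ evaluated at $T\yoneda_X(\frx)\in T\hat X$, and the last inequality uses $1_X\le\Sup_X\cdot\yoneda_X$ in the form $(\Sup_X\cdot\yoneda_X)_*\le a$. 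Unit and counit together give $\Sup_X\dashv\yoneda_X$.

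For $(iii)\Rw(iv)$ I use that left adjoint $\Tth$-functors are cocontinuous (Proposition \ref{SomeColims}(2)) and that, by Proposition \ref{SomeColims}(1), each $\mate\psi$ arises as the colimit $\colim(\psi,\yoneda_X)$ of representables; applying the cocontinuous $\Sup_X$ and invoking $\Sup_X\cdot\yoneda_X\cong 1_X$ yields $\Sup_X\cdot\mate\psi\cong\colim(\psi,1_X)$, so every colimit of $1_X$ is representable and, by the lemma reducing arbitrary weighted colimits to colimits of $1_X$, $X$ is cocomplete. For $(iv)\Rw(i)$, given a fully faithful $i\colon A\to B$ and an arbitrary $f\colon A\to X$, I form $g=\colim(i_*,f)\colon B\to X$, which exists because $\colim(i_*,f)\cong\colim(i_*\kleisli f^*,1_X)$ and $X$ is cocomplete; thus $g_*=f_*\homkleisliright i_*$. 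It then remains to verify $g\cdot i\cong f$, i.e.\ $(f_*\homkleisliright i_*)\kleisli i_*=f_*$: the inequality $\le$ is automatic, while for $\ge$ I rewrite $f_*=(f_*\kleisli i^*)\kleisli i_*$ using full faithfulness of $i$ in the form $i^*\kleisli i_*=1_A^*$, and observe $f_*\kleisli i^*\le f_*\homkleisliright i_*$ by the adjunction defining $\homkleisliright$.

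I expect the genuine content to sit in the counit step of $(ii)\Rw(iii)$: this is precisely the lax-idempotency (Kock--Z\"oberlein phenomenon) of the presheaf construction, and it is the one place where the Yoneda lemma is used in an essential, non-formal way, the crucial move being that a $\Tth$-functor above the identity contracts under $(-)_*$. The remaining implications are, modulo the calculus of $\homkleisliright$ and Kleisli convolution, formal once Theorem \ref{Yoneda}, Corollary \ref{YonedaLem} and Proposition \ref{SomeColims} are in hand.
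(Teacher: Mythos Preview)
Your cycle matches the paper's almost verbatim; in particular $(i)\Rightarrow(ii)$, $(ii)\Rightarrow(iii)$ and $(iv)\Rightarrow(i)$ are exactly the paper's arguments (you have merely swapped the names ``unit'' and ``counit'' in $(ii)\Rightarrow(iii)$, and written $\le$ where the paper has $=$ since $\Sup_X\cdot\yoneda_X\cong 1_X$ gives equality of $(-)_*$).

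The one point that needs care is $(iii)\Rightarrow(iv)$. You invoke Proposition~\ref{SomeColims}(1) to say $\mate{\psi}\cong\colim(\psi,\yoneda_X)$ and then push this through the cocontinuous $\Sup_X$; but Proposition~\ref{SomeColims}(1) is stated only for weights $\psi:X\kmodto G$, whereas cocompleteness (via the reduction lemma you cite) requires weights $\psi:X\kmodto Z$ with arbitrary codomain $Z$, and the Remark immediately preceding Proposition~\ref{SomeColims} says explicitly that reduction to $Z=G$ is \emph{not} known for general $\Tth$. Your argument is nonetheless salvageable because the identity $(\yoneda_X)_*\homkleisliright\psi=\mate{\psi}_*$ does hold for arbitrary $Z$: apply Theorem~\ref{Yoneda} with $\varphi=(\yoneda_X)_*$ and note $\mate{(\yoneda_X)_*}=1_{\hat X}$ by Corollary~\ref{YonedaLem}. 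This is essentially what the paper does directly, instantiating Theorem~\ref{Yoneda} with $\varphi=a=1_X^*$ instead and using $\yoneda_X^*=(\Sup_X)_*$ to arrive at $(\Sup_X\cdot\mate{\psi})_*=1_X^*\homkleisliright\psi$ without the detour through cocontinuity.
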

\begin{proof}
(i)$\Rw$(ii)\hspace{1ex} Follows immediately from the fact that $\yoneda_X:X\to\hat{X}$ is fully faithful (see Corollary \ref{YonedaLem}).\\
(ii)$\Rw$(iii)\hspace{1ex} Since $\Sup_X\cdot\yoneda_X\cong 1_X$ by hypothesis, it is enough to show $1_{\hat{X}}\le\yoneda_X\cdot \Sup_X$. Let $\psi\in\hat{X}$ and $\frx\in TX$. Then, by Corollary \ref{YonedaLem} and Lemma \ref{OrdTFun}, we have
\[
 \psi(\frx)=\fspstr{T\yoneda_X(\frx)}{\psi}\le a(T(\Sup_X\cdot\yoneda)(\frx),\Sup_X(\psi))=a(\frx,\Sup_X(\psi))
=\fspstr{T\yoneda_X(\frx)}{\yoneda_X\cdot \Sup_X(\psi)}=\yoneda_X\cdot \Sup_X(\psi)(\frx).
\]
(iii)$\Rw$(iv)\hspace{1ex} Assume $\Sup_X\dashv\yoneda_X$ and let $\psi:X\kmodto Y$ in $\Mod{\Tth}$. By Theorem \ref{Yoneda}, for all $\fry\in TY$ and $x\in X$ we have
\begin{multline*}
1_X^*\homkleisliright\psi(\fry,x)
=\fspstr{T\mate{\psi}(\fry)}{\yoneda_X(x)}
=\yoneda_X^\circ\cdot{\mate{\psi}}_*(\fry,x)
=\yoneda_X^*\kleisli{\mate{\psi}}_*(\fry,x)\\
=(\Sup_X)_*\kleisli{\mate{\psi}}_*(\fry,x)
=(\Sup_X\cdot\mate{\psi})_*(\fry,x),
\end{multline*}
hence $\Sup_X\cdot\mate{\psi}\cong\colim(\psi,1_X)$.\\
(iv)$\Rw$(i)\hspace{1ex}  Let $i:A\to B$ be a fully faithful $\Tth$-functor. Let $f:A\to X$ be a $\Tth$-functor. Hence, by cocompleteness of $X$, $f_*\homkleisliright i_*=g_*$ for some $\Tth$-functor $g:B\to X$. Hence $(g\cdot i)_*=g_*\kleisli i_*\le f_*$. On the other hand, from $f_*=f_*\kleisli i^*\kleisli i_*$ we deduce $f_*\kleisli i^*\le f_*\homkleisliright i_*=g_*$, hence $f_*\le g_*\kleisli i_*$.
\end{proof}
\begin{remarks}
As it happens often, the proof of the theorem above gives us some further information. Firstly, any left inverse $S:\hat{X}\to X$ to the Yoneda embedding $\yoneda_X:X\to\hat{X}$ is actually left adjoint to $\yoneda_X$. I learned this useful fact in the context of quantaloid-enriched categories from Isar Stubbe. Secondly, the $\psi$-weighted colimit of $1_X:X\to X$ in a cocomplete $\Tth$-category $X$ can be calculated as $\Sup_X\cdot\mate{\psi}$. Finally, if $X$ is injective, then any $\Tth$-functor $f:A\to X$ has not only an extension along a fully faithful $\Tth$-functor $i:A\to B$, but even a smallest one with respect to the order on hom-sets in $\Cat{\Tth}$.
\end{remarks}
Let $f:X\to Y$ be a function. We define $f^{-1}:\V^{|Y|}\to\V^{|X|}$ to be the mate of the composite
\[
 |X|\otimes\V^{|Y|}\xrightarrow{\hspace{1em}|f|\otimes 1_{\V^{|Y|}}\hspace{1em}} |Y|\otimes\V^{|Y|} \xrightarrow{\hspace{1em}\ev\hspace{1em}}\V
\]
of $\Tth$-functors. Explicitly, for any $\psi\in\V^{|Y|}$ and $\frx\in TX$, $f^{-1}(\psi)(\frx)=\psi(Tf(\frx))$. Hence, if $f$ is a $\Tth$-functor and $\psi\in\hat{Y}$, then $f^{-1}(\psi)=\psi\kleisli f_*\in\hat{X}$, so hat $f^{-1}$ restricts to a $\Tth$-functor
\[
 f^{-1}:\hat{Y}\to\hat{X}.
\]
\begin{theorem}\label{HatVCocompl}
For each $\Tth$-category $X$, $\hat{X}$ is cocomplete where $\Sup_{\hat{X}}=\yoneda_X^{-1}$.
\end{theorem}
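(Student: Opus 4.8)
The plan is to deduce the statement from Theorem~\ref{CharCocompl}, applied with $\hat{X}$ in the role of the $\Tth$-category $X$. By the equivalence of conditions (ii) and (iv) there, $\hat{X}$ is cocomplete precisely when its Yoneda embedding $\yoneda_{\hat{X}}:\hat{X}\to\hat{\hat{X}}$ admits a left inverse; moreover, by the first of the Remarks following that theorem, any such left inverse is automatically the left adjoint $\Sup_{\hat{X}}$. Thus both halves of the assertion---cocompleteness of $\hat{X}$ and the identity $\Sup_{\hat{X}}=\yoneda_X^{-1}$---follow at once once I show that the $\Tth$-functor $\yoneda_X^{-1}:\hat{\hat{X}}\to\hat{X}$ (which is a $\Tth$-functor by the discussion preceding the statement, applied to $f=\yoneda_X$) is a left inverse to $\yoneda_{\hat{X}}$.

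First I would fix $\psi\in\hat{X}$ and $\frx\in TX$ and unwind the explicit description of $\yoneda_X^{-1}$ to obtain
\[
\yoneda_X^{-1}\bigl(\yoneda_{\hat{X}}(\psi)\bigr)(\frx)=\yoneda_{\hat{X}}(\psi)\bigl(T\yoneda_X(\frx)\bigr).
\]
Next I would evaluate the Yoneda embedding of $\hat{X}$: for $\frp\in T\hat{X}$ the value $\yoneda_{\hat{X}}(\psi)(\frp)$ is the hom of $\hat{X}$ at $(\frp,\psi)$, and since $\hat{X}$ carries the $\Tth$-structure inherited from $\V^{|X|}$ this hom is the restriction of the exponential structure $\fspstr{-}{-}$; hence $\yoneda_{\hat{X}}(\psi)(\frp)=\fspstr{\frp}{\psi}$. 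Taking $\frp=T\yoneda_X(\frx)$ gives
\[
\yoneda_X^{-1}\bigl(\yoneda_{\hat{X}}(\psi)\bigr)(\frx)=\fspstr{T\yoneda_X(\frx)}{\psi}=\psi(\frx),
\]
the last equality being exactly the Yoneda lemma (Corollary~\ref{YonedaLem}) for the $\Tth$-category $X$. This proves $\yoneda_X^{-1}\cdot\yoneda_{\hat{X}}=1_{\hat{X}}$ on the nose, which is more than the required equivalence, and completes the argument.

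I do not expect a genuine obstacle here: once the reduction to ``$\yoneda_X^{-1}$ is a left inverse'' is in place, everything collapses to a single computation. The only points requiring care are bookkeeping across the three layers $X$, $\hat{X}$, $\hat{\hat{X}}$---in particular recognising that the correct argument at which to apply the Yoneda lemma is the point $T\yoneda_X(\frx)\in T\hat{X}$---and the observation that the $\Tth$-structure of $\hat{X}$, being inherited from $\V^{|X|}$, lets me identify $\yoneda_{\hat{X}}(\psi)$ with the bracket $\fspstr{-}{\psi}$. Neither of these is hard, so the essential content of the theorem is the clean interplay between the construction $f\mapsto f^{-1}$ and the Yoneda lemma.
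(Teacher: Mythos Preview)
Your proposal is correct and follows essentially the same route as the paper: reduce via Theorem~\ref{CharCocompl} to showing $\yoneda_X^{-1}\cdot\yoneda_{\hat{X}}=1_{\hat{X}}$, then carry out the one-line computation $\yoneda_X^{-1}(\yoneda_{\hat{X}}(\psi))(\frx)=\yoneda_{\hat{X}}(\psi)(T\yoneda_X(\frx))=\fspstr{T\yoneda_X(\frx)}{\psi}=\psi(\frx)$ using Corollary~\ref{YonedaLem}. Your additional remarks spelling out why $\yoneda_{\hat{X}}(\psi)$ is the bracket $\fspstr{-}{\psi}$ and invoking the Remarks after Theorem~\ref{CharCocompl} for the identification $\Sup_{\hat{X}}=\yoneda_X^{-1}$ are accurate elaborations the paper leaves implicit.
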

\begin{proof}
According to Theorem \ref{CharCocompl}, we have to show $\yoneda_X^{-1}\cdot\yoneda_{\hat{X}}=1_{\hat{X}}$. To do so, let $\psi\in\hat{X}$ and $\frx\in TX$. Then, by the Yoneda Lemma (Corollary \ref{YonedaLem}), we have
\[
\yoneda_X^{-1}(\yoneda_{\hat{X}}(\psi))(\frx)
=\yoneda_{\hat{X}}(\psi)(T\yoneda_X(\frx))
=\fspstr{T\yoneda_X(\frx)}{\psi}=\psi(\frx),
\]
and the assertion follows.
\end{proof}
Note that the Theorem above applies in particular to the discrete $\Tth$-category $X=(X,e_X^\circ)$, hence $\V^{|X|}$ is cocomplete for each set $X$. Clearly, if $T1=1$, then $\V^{|1|}\cong\V$ and therefore the $\Tth$-category $\V$ is cocomplete and hence injective in $\Cat{\Tth}$. A different proof of this property of $\V$ can be found in \cite[Lemma 3.18]{HT_LCls}. Note that also in the proof of \cite{HT_LCls} the condition $T1=1$ is crucial.

\subsection{Kan extension}
From Theorem \ref{CharCocompl} we know that each $\Tth$-functor $f:X\to Y$ into a cocomplete $\Tth$-category $Y$ has a smallest extension along $\yoneda_X:X\to\hat{X}$. We will see now that this extension is particularly nice (compare with \cite[Theorem 5.35]{Kel_EnrCat}).
\begin{theorem}\label{Kan}
Composition with $\yoneda_X:X\to\hat{X}$ defines an equivalence
\[
 \Cocompl{\Tth}(\hat{X},Y)\to\Cat{\Tth}(X,Y)
\]
of ordered sets, for each cocomplete $\Tth$-category $Y$. That is, for each $\Tth$-functor $f:X\to Y$ into a cocomplete $\Tth$-category $Y$, there exists a (up to equivalence) unique left adjoint $\Tth$-functor $f_L:\hat{X}\to Y$ such that $f_L\cdot\yoneda_X\cong f$; and, if $f\le f'$, then $f_L\le f'_L$. Moreover, the right adjoint to $f_L$ is given by $\mate{f_*}$.
\[
\xymatrix{X\ar[r]^{\yoneda_X}\ar[dr]_f^\cong &\hat{X}\ar[d]_{f_L}^\dashv\\ & Y\ar@/_1pc/[u]_{\mate{f_*}}}
\]
\end{theorem}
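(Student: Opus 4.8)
The plan is to exhibit $\mate{f_*}:Y\to\hat{X}$ as the right adjoint and to build $f_L$ as a composite of two adjunctions, never leaving the comfort of modules. First I would note that $\mate{f_*}$ is a $\Tth$-functor by Theorem \ref{CharTMod} applied to the module $f_*:X\kmodto Y$. Next, since $(\yoneda_X)_*:X\kmodto\hat{X}$ and $f^*:Y\kmodto X$ are $\Tth$-modules (Lemma \ref{Modules}), so is their Kleisli convolution $(\yoneda_X)_*\kleisli f^*:Y\kmodto\hat{X}$; hence, \emph{again} by Theorem \ref{CharTMod}, the map $f_\dagger:=\mate{(\yoneda_X)_*\kleisli f^*}:\hat{X}\to\hat{Y}$ is automatically a $\Tth$-functor. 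A short computation with the Yoneda lemma (Corollary \ref{YonedaLem}, i.e.\ $(\yoneda_X)_*=\ev$) together with associativity of the Kleisli convolution identifies it concretely as $f_\dagger(\psi)=\psi\kleisli f^*$. Finally, as $Y$ is cocomplete, $\Sup_Y:\hat{Y}\to Y$ exists by Theorem \ref{CharCocompl}, and I set $f_L:=\Sup_Y\cdot f_\dagger$.

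To prove $f_L\dashv\mate{f_*}$ I would compose adjunctions. On one side $\Sup_Y\dashv\yoneda_Y$ by Theorem \ref{CharCocompl}. On the other side $f_\dagger\dashv f^{-1}$: both are $\Tth$-functors (recall $f^{-1}$ was already shown to be one), and since the order between $\Tth$-functors into a presheaf $\Tth$-category is computed pointwise (Lemma \ref{OrdTFun}), the unit and counit reduce to the pointwise inequalities $\psi\le\psi\kleisli(f^*\kleisli f_*)$ and $\phi\kleisli(f_*\kleisli f^*)\le\phi$. These follow at once from $a\le f^*\kleisli f_*$ (Lemma \ref{Modules}) and $f_*\kleisli f^*\le b$ (established in the text), using $\psi\kleisli a=\psi$ and $\phi\kleisli b=\phi$ for modules. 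Composing the two adjunctions, $f_L=\Sup_Y\cdot f_\dagger$ is left adjoint to $f^{-1}\cdot\yoneda_Y$, and a direct check $f^{-1}(\yoneda_Y(y))=\yoneda_Y(y)\kleisli f_*=\mate{f_*}(y)$ shows the right adjoint is exactly $\mate{f_*}$.

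For the triangle $f_L\cdot\yoneda_X\cong f$ I would use that the Yoneda embedding sends $x$ to the representable $\yoneda_X(x)=x^*$, where $x:G\to X$ is the corresponding point. Functoriality of $(-)^*:\Cat{\Tth}^\op\to\Mod{\Tth}$ then yields $f_\dagger(\yoneda_X(x))=x^*\kleisli f^*=(f\cdot x)^*=\yoneda_Y(f(x))$, that is $f_\dagger\cdot\yoneda_X=\yoneda_Y\cdot f$. Consequently $f_L\cdot\yoneda_X=\Sup_Y\cdot\yoneda_Y\cdot f\cong f$, since $\Sup_Y\cdot\yoneda_Y\cong 1_Y$ (the counit of $\Sup_Y\dashv\yoneda_Y$, $\yoneda_Y$ being fully faithful).

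It remains to upgrade this to an equivalence of ordered sets. Precomposition with $\yoneda_X$ is evidently order-preserving, and the previous paragraph shows it is essentially surjective. The crucial point is that it \emph{reflects} the order: if $g,g':\hat{X}\to Y$ are left adjoint (hence cocontinuous by Proposition \ref{SomeColims}) and $g\cdot\yoneda_X\le g'\cdot\yoneda_X$, then $g\le g'$. Here I would invoke the density formula $\mate{\psi}\cong\colim(\psi,\yoneda_X)$ (Proposition \ref{SomeColims}), so that cocontinuity gives $g(\mate{\psi})\cong\colim(\psi,g\cdot\yoneda_X)$ and likewise for $g'$; since $\colim(\psi,-)$ is monotone in its functor argument (it is represented through $(-)\homkleisliright\psi$, and $h\le h'$ forces $h'_*\le h_*$), we obtain $g(\psi)\le g'(\psi)$ for every $\psi$, whence $g\le g'$ by Lemma \ref{OrdTFun}. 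This single reflection property delivers both the uniqueness of $f_L$ up to equivalence and the monotonicity $f\le f'\Rightarrow f_L\le f'_L$. I expect the genuine obstacle to be the verification that $f_\dagger$ is a bona fide $\Tth$-functor rather than a mere order map; the clean way around it is precisely the observation in the first paragraph that $f_\dagger=\mate{(\yoneda_X)_*\kleisli f^*}$, so that its functoriality is handed to us for free by Theorem \ref{CharTMod}.
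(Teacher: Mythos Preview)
Your argument is correct, but it follows a genuinely different route from the paper's. The paper obtains $f_L$ as the smallest extension of $f$ along $\yoneda_X$ furnished by Theorem~\ref{CharCocompl}, so that $(f_L)_*=f_*\homkleisliright(\yoneda_X)_*$, and then applies Theorem~\ref{Yoneda} directly to compute
\[
(f_L)_*(\frp,y)=(f_*\homkleisliright(\yoneda_X)_*)(\frp,y)=\fspstr{\frp}{\mate{f_*}(y)}={\mate{f_*}}^*(\frp,y),
\]
which immediately gives $f_L\dashv\mate{f_*}$; uniqueness is then read off Proposition~\ref{SomeColims}, and monotonicity follows in one line from the universal property of the extension $(-)\homkleisliright(\yoneda_X)_*$. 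By contrast, you construct $f_L=\Sup_Y\cdot f_\dagger$ as a composite of two adjunctions, where $f_\dagger=\mate{(\yoneda_X)_*\kleisli f^*}$ is built and shown to satisfy $f_\dagger\dashv f^{-1}$ by a direct pointwise check using only Corollary~\ref{YonedaLem} and Lemma~\ref{OrdTFun}; you then derive order-reflection (hence uniqueness and monotonicity) from density and cocontinuity. What your approach buys is that it avoids the full strength of Theorem~\ref{Yoneda}, relying only on the classical Yoneda lemma, and in the process you independently establish $\hat{f}\dashv f^{-1}$ --- a fact the paper only derives \emph{after} Theorem~\ref{Kan}, as one of its consequences. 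What the paper's approach buys is brevity: the single invocation of Theorem~\ref{Yoneda} replaces your entire detour through $f_\dagger$ and the composition of adjunctions.
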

\begin{proof}
Let $f_L:\hat{X}\to Y$ be the extension of $f$ where $(f_L)_*=f_*\homkleisliright(\yoneda_X)_*$. Then, by Theorem \ref{Yoneda}, for any $\frp\in T\hat{X}$ and $y\in Y$, we have
\[
 (f_L)_*(\frp,y)=f_*\homkleisliright(\yoneda_X)_*(\frp,y)
=\fspstr{\frp}{\mate{f_*}(y)}
={\mate{f_*}}^*(\frp,y),
\]
hence $f_L\dashv\mate{f_*}$. Unicity of $f_L$ follows from Proposition \ref{SomeColims}. Assume now $f\le f'$. Then $f'_*\le f_*$ and therefore $(f'_L)_*\kleisli(\yoneda_X)_*\le f'_*\le f_*$. Hence $(f'_L)_*\le(f_L)_*$, that is, $f_L\le f'_L$.
\end{proof}
The theorem above tells us that both inclusion functors $\CocomplSep{\Tth}\hrw\CatSep{\Tth}$ and $\CocomplSep{\Tth}\hrw\Cat{\Tth}$ have a left adjoint defined by $X\mapsto\hat{X}$ which, moreover, is a 2-functor. In particular, if $f:X\to Y$ is a $\Tth$-functor, then $\yoneda_Y\cdot f:X\to\hat{Y}$ has a left adjoint extension $\hat{f}:\hat{X}\to\hat{Y}$ along $\yoneda_X:X\to\hat{X}$.
\[
 \xymatrix{X\ar[r]^{\yoneda_X}\ar[d]_f
	& \hat{X}\ar[d]^{\hat{f}}\\ Y\ar[r]_{\yoneda_Y} & \hat{Y}}
\]
Furthermore, by Theorem \ref{Kan}, the right adjoint of $\hat{f}$ is given by $\mate{(\yoneda_Y\cdot f)_*}:\hat{Y}\to\hat{X}$. Explicitly, for each $\psi\in\hat{Y}$ and each $\frx\in TX$ we have
\[
 \mate{(\yoneda_Y\cdot f)_*}(\psi)(\frx)
=(\yoneda_Y)_*\kleisli f_*(\frx,\psi)
=(\yoneda_Y)_*\cdot Tf(\frx,\psi)
=(\yoneda_Y)_*(Tf(\frx),\psi)
=\psi(Tf(\frx)),
\]
that is, $f^{-1}=\mate{(\yoneda_Y\cdot f)_*}$ and $\hat{f}\dashv f^{-1}$. Passing to the underlying ordered sets, $f^{-1}:\hat{Y}\to\hat{X}$ corresponds to $-\kleisli f_*$, therefore the underlying (order-preserving) map of $\hat{f}$ is given by $-\kleisli f^*$ (see Remark \ref{AdjKleisli}). Hence, for $\psi\in\hat{X}$ and $\fry\in TY$ we have
\[
\psi\kleisli f^*=\psi\kleisli(f^\circ\cdot b)=\psi\cdot Tf^\circ\cdot\Txi b\cdot m_Y^\circ=\psi\cdot Tf^\circ\cdot s
\]
and
\[
\psi\kleisli f^*(\fry)=\bigvee_{\frx\in TX}\psi(\frx)\otimes s(\fry,Tf(\frx)),
\]
where $b$ denotes the structure on $Y$ and $s=\Txi b\cdot m_Y$. 

Consider now the discrete $\Tth$-category $X_D=(X,e_X^\circ)$. Then, for any $\Tth$-category $X$, the identity map $j_X:X_D\to X,\;x\mapsto x$ is a $\Tth$-functor, and we obtain a left adjoint $\Tth$-functor $\widehat{j_X}:\widehat{X_D}=\V^{|X|}\to\hat{X}$. In the sequel we find it convenient to write $R_X$ instead. One easily verifies that its right adjoint $j_X^{-1}:\hat{X}\to\V^{|X|}$ is given by the inclusion map $i_X:\hat{X}\hrw\V^{|X|}$. 
\begin{corollary}
For each $\Tth$-category $X=(X,a)$, the inclusion functor $i_X:\hat{X}\to\V^{|X|}$ has a left adjoint given by
\[
R_X:\V^{|X|}\to\hat{X},\,\psi\mapsto\left(\frx\mapsto\bigvee_{\fry\in TX}\psi(\fry)\otimes r(\frx,\fry)\right),
\]
where $r=\Txi a\cdot m_X^\circ$.
\end{corollary}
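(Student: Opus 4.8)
The plan is to recognise that the statement is merely the explicit rendering of a left adjoint whose \emph{existence} is already in hand from the discussion immediately preceding it, so the proof reduces to unwinding definitions rather than to verifying an adjunction from scratch. Concretely, let $X_D=(X,e_X^\circ)$ be the discrete $\Tth$-category on the same underlying set, so that $\widehat{X_D}=\V^{|X|}$ as recorded above, and let $j_X:X_D\to X$ be the identity-on-points $\Tth$-functor. Applying Theorem~\ref{Kan} to $j_X$ produces the left adjoint $\Tth$-functor $\widehat{j_X}=R_X:\V^{|X|}\to\hat{X}$ together with its right adjoint $(j_X)^{-1}:\hat{X}\to\V^{|X|}$. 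Since $j_X$ is the identity on underlying sets, $Tj_X=1_{TX}$, and the formula $f^{-1}(\psi)(\frx)=\psi(Tf(\frx))$ gives $(j_X)^{-1}(\psi)(\frx)=\psi(\frx)$; thus $(j_X)^{-1}$ is precisely the inclusion $i_X:\hat{X}\hookrightarrow\V^{|X|}$, and $R_X\dashv i_X$ is established. It remains only to read off the action of $R_X$.

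For that, I would specialise the general description of $\hat{f}$ obtained just before the statement: its underlying order-preserving map is $-\kleisli f^*$, with elementwise value $\psi\kleisli f^*(\fry)=\bigvee_{\frx\in TX}\psi(\frx)\otimes s(\fry,Tf(\frx))$, where $s=\Txi b\cdot m_Y^\circ$ and $b$ is the structure on the codomain. Taking $f=j_X$, the codomain structure is $b=a$, whence $s=\Txi a\cdot m_X^\circ=r$, while $Tj_X=1_{TX}$ forces $Tf(\frx)=\frx$. Substituting, and relabelling the free argument and the summation index (using commutativity of $\otimes$), yields exactly $R_X(\psi)(\frx)=\bigvee_{\fry\in TX}\psi(\fry)\otimes r(\frx,\fry)$. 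As an independent check one can compute directly $R_X(\psi)=\psi\kleisli(j_X)^*=\psi\cdot\Txi a\cdot m_X^\circ=\psi\cdot r$, using $(j_X)^*=a$ (as $j_X$ is the identity), so that $\Txi((j_X)^*)=\Txi a$, together with the same elementwise expansion of the $\V$-relation composite $\psi\cdot r$.

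I do not expect a genuine obstacle: the content is bookkeeping with the definitions of $(-)^*$, of Kleisli convolution, and of the extension $\Txi$. The one point I would flag explicitly is the coexistence of the two presheaf objects. Both $\widehat{X_D}=\V^{|X|}$ and $\hat{X}$ sit over $\V^{|X|}$, but $\hat{X}$ may be a proper subobject, so it is not a priori clear that the function $\psi\mapsto\psi\cdot r$ takes values in $\hat{X}$; this is guaranteed precisely because $\widehat{j_X}=R_X$ is, by Theorem~\ref{Kan}, a $\Tth$-functor with codomain $\hat{X}$, which is why routing through that theorem is preferable to a bare-hands verification of this membership and of the two triangle inequalities.
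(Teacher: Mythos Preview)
Your proposal is correct and is exactly the argument the paper intends: the corollary is stated without proof because the preceding paragraph already identifies $R_X=\widehat{j_X}$ with right adjoint $j_X^{-1}=i_X$ via Theorem~\ref{Kan}, and the explicit formula is the specialisation of the displayed expression for $\hat{f}$ to $f=j_X$. One minor remark: the variable relabelling alone matches the formulas, so the appeal to commutativity of $\otimes$ is unnecessary (though harmless, since $\V$ is commutative).
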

\begin{corollary}
For each function $f:X\to Y$, the left adjoint to $f^{-1}:\V^{|Y|}\to\V^{|X|}$ is given by
\[
\V^{|X|}\to\V^{|Y|},\,\psi\mapsto \left(\fry\mapsto\bigvee_{\frx:Tf(\frx)=\fry}\psi(\frx)\right).
\]
\end{corollary}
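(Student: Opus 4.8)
The plan is to recognise the displayed map as the left-adjoint extension $\hat f$ supplied by the discussion preceding this corollary, applied to $f$ regarded as a $\Tth$-functor between \emph{discrete} $\Tth$-categories. Concretely, view the function $f:X\to Y$ as a $\Tth$-functor $f:X_D\to Y_D$ between the discrete $\Tth$-categories $X_D=(X,e_X^\circ)$ and $Y_D=(Y,e_Y^\circ)$; since these are the free $\Tth$-categories on the sets $X$ and $Y$, the map $f$ is automatically a $\Tth$-functor. As $\widehat{X_D}=\V^{|X|}$ and $\widehat{Y_D}=\V^{|Y|}$, the general construction yields $\hat f:\V^{|X|}\to\V^{|Y|}$ together with its right adjoint $f^{-1}:\V^{|Y|}\to\V^{|X|}$, $\psi\mapsto\psi\kleisli f_*=\psi\cdot Tf$; and this $f^{-1}$ coincides with the map of the statement, since $(\psi\kleisli f_*)(\frx)=\psi(Tf(\frx))$. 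Hence a left adjoint to $f^{-1}$ exists and is the $\Tth$-functor $\hat f$, and it only remains to identify $\hat f$ with the displayed formula.

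For this I would compute the underlying order-preserving map of $\hat f$, which by Remark \ref{AdjKleisli} (using $f_*\dashv f^*$) is $-\kleisli f^*$. Here $f^*=f^\circ\cdot b$ with $b=e_Y^\circ$ the structure of $Y_D$, so for $\psi\in\V^{|X|}$ one has $\psi\kleisli f^*=\psi\cdot Tf^\circ\cdot\Txi(e_Y^\circ)\cdot m_Y^\circ$. The key simplification is that $\Txi(e_Y^\circ)\cdot m_Y^\circ=(Te_Y)^\circ\cdot m_Y^\circ=(m_Y\cdot Te_Y)^\circ=1_{TY}$, by the monad law $m_Y\cdot Te_Y=1_{TY}$; this is where the discreteness of $Y_D$ enters and is the only non-formal step. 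Consequently $\psi\kleisli f^*=\psi\cdot Tf^\circ$, and evaluating at $\fry\in TY$ in $\Mat{\V}$ gives $(\psi\cdot Tf^\circ)(\fry)=\bigvee_{\frx\in TX}Tf^\circ(\fry,\frx)\otimes\psi(\frx)=\bigvee_{\frx:Tf(\frx)=\fry}\psi(\frx)$, which is exactly the displayed map.

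The main obstacle is conceptual rather than computational: one must notice that the natural home of the function $f$ is the pair of discrete $\Tth$-categories, so that the abstract adjunction $\hat f\dashv f^{-1}$ specialises to the present situation; once this is granted, the only genuine calculation is the collapse $\Txi(e_Y^\circ)\cdot m_Y^\circ=1_{TY}$. As a sanity check one could bypass the machinery and verify directly that the displayed map $L$ satisfies $1\le f^{-1}\cdot L$ and $L\cdot f^{-1}\le 1$ pointwise (both trivial, the first because $\frx$ itself occurs in the supremum and the second because every $\frx$ in the supremum satisfies $Tf(\frx)=\fry$); the advantage of the machinery approach is that it simultaneously delivers the fact that $L$ is a $\Tth$-functor, which the direct verification would have to establish separately.
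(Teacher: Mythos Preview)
Your proof is correct and follows exactly the approach the paper intends: the corollary is the specialisation of the general formula $\hat f(\psi)(\fry)=\bigvee_{\frx}\psi(\frx)\otimes s(\fry,Tf(\frx))$ (with $s=\Txi b\cdot m_Y^\circ$) to the case where $f$ is viewed as a $\Tth$-functor between the discrete $\Tth$-categories $X_D$ and $Y_D$, so that $b=e_Y^\circ$ and $s=1_{TY}$. Your computation $\Txi(e_Y^\circ)\cdot m_Y^\circ=(Te_Y)^\circ\cdot m_Y^\circ=(m_Y\cdot Te_Y)^\circ=1_{TY}$ is precisely the collapse that makes the corollary immediate.
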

For a $\Tth$-functor $f:X\to Y$, let us write temporarily $f_D:(X,e_X^\circ)\to(Y,e_Y^\circ)$ for the same map between the discrete $\Tth$-categories. Since obviously $j_Y\cdot f_D=f\cdot j_X$, we have a commutative diagram
\[
\xymatrix{\V^{|X|}\ar[r]^{\widehat{f_D}}\ar[d]_{R_X} & \V^{|Y|}\ar[d]^{R_Y}\\
  \hat{X}\ar[r]_{\hat{f}} & \hat{Y}}
\]
of $\Tth$-functors. Furthermore, we have $\widehat{f}\cdot f^{-1}=1_{\hat{X}}$ provided that $f$ is L-dense, i.e.\ $f_*\kleisli f^*=1_X^*$. Satisfying \BC, the functor $T:\SET\to\SET$ sends surjections to surjections, and therefore each surjective $\Tth$-functor $f$ is L-dense.

\subsection{Cocomplete $\Tth$-categories as Eilenberg--Moore algebras}\label{CocomplAlg}

\begin{proposition}\label{CharLeftAdFun}
Let $f:X\to Y$ be a $\Tth$-functor between cocomplete $\Tth$-categories. Then the following assertions are equivalent.
\begin{eqcond}
\item $f$ is left adjoint.
\item $f$ is cocontinuous, that is, $f$ preserves all weighted colimits.
\item We have $f\cdot \Sup_X\cong \Sup_Y\cdot\hat{f}$, where $\Sup_X\dashv\yoneda_X$ and $\Sup_Y\dashv\yoneda_Y$.
\[
 \xymatrix{\hat{X}\ar[r]^{\hat{f}}\ar[d]_{\Sup_X}\ar@{}[dr]|{\cong}
	& \hat{Y}\ar[d]^{\Sup_Y}\\ X\ar[r]_f & Y}
\]
\end{eqcond}
\end{proposition}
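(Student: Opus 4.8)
The plan is to establish the cycle of implications (i)$\Rw$(ii)$\Rw$(iii)$\Rw$(i). The first link is already available: a left adjoint $\Tth$-functor preserves all weighted colimits by Proposition \ref{SomeColims}(2), so (i)$\Rw$(ii) needs nothing new.

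For (ii)$\Rw$(iii) I would compare the two $\Tth$-functors $f\cdot\Sup_X$ and $\Sup_Y\cdot\hat{f}$ from $\hat{X}$ to $Y$ by evaluating them at an arbitrary $\psi\in\hat{X}$. Since $G$ is a generator and $\cong$ of $\Tth$-functors is detected objectwise by Lemma \ref{OrdTFun}(1), it suffices to prove $f(\Sup_X\psi)\cong\Sup_Y(\hat{f}\psi)$ for every such $\psi$. Now $\Sup_X\cdot\mate{\psi}$ realizes the colimit $\colim(\psi,1_X)$ (this is the content of the remarks following Theorem \ref{CharCocompl}), so cocontinuity of $f$ gives $f\cdot\Sup_X\cdot\mate{\psi}\cong\colim(\psi,f)$. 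Applying the reduction lemma $\colim(\psi,f)\cong\colim(\psi\circ f^*,1_Y)$ and recalling that the underlying map of $\hat{f}$ is $-\kleisli f^*$, the right-hand side is precisely $\Sup_Y\cdot\mate{\hat{f}\psi}$, i.e. $\Sup_Y(\hat{f}\psi)$. This yields the desired objectwise equivalence, hence (iii).

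The implication (iii)$\Rw$(i) is where the real work lies. The key observation is that the right-hand composite $\Sup_Y\cdot\hat{f}$ is built from two left adjoints: $\Sup_Y\dashv\yoneda_Y$ by Theorem \ref{CharCocompl}, and $\hat{f}\dashv f^{-1}$ as recorded after Theorem \ref{Kan}. Hence $\Sup_Y\cdot\hat{f}$, and therefore its equivalent $f\cdot\Sup_X$, is left adjoint, with right adjoint $f^{-1}\cdot\yoneda_Y$. It then remains to transport this adjunction back along the fully faithful $\yoneda_X$. Writing $L:=f\cdot\Sup_X$ and $R:=f^{-1}\cdot\yoneda_Y$, so that $L\dashv R$, I would set $g:=\Sup_X\cdot f^{-1}\cdot\yoneda_Y$ and verify the two inequalities in the $2$-category $\Cat{\Tth}$: the counit $f\cdot g=L\cdot R\le 1_Y$ is just the counit of $L\dashv R$, while for the unit one uses $f\cong L\cdot\yoneda_X$ (from $\Sup_X\cdot\yoneda_X\cong 1_X$, Theorem \ref{CharCocompl}(ii)) together with whiskering of $1_{\hat{X}}\le R\cdot L$ to obtain $g\cdot f\cong\Sup_X\cdot R\cdot L\cdot\yoneda_X\ge\Sup_X\cdot\yoneda_X\cong 1_X$. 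This exhibits $f\dashv g$, so $f$ is left adjoint.

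The main obstacle I anticipate is exactly this last transport step. One must check that whiskering the $2$-cells $1_{\hat{X}}\le R\cdot L$ and $L\cdot R\le 1_Y$ by $\Sup_X$ and $\yoneda_X$ respects the order on the hom-sets of $\Cat{\Tth}$ (it does, since $\Cat{\Tth}$ is a genuine $2$-category), and that the idempotency $\Sup_X\cdot\yoneda_X\cong 1_X$ is invoked in the correct direction; equivalently, one may phrase the whole step through modules, where $L\cong L'$ and $L'\dashv R$ give $L\dashv R$ at once since $L^*=(L')^*$. By comparison the other two implications are routine, the only delicate point in (ii)$\Rw$(iii) being the reduction to objectwise equivalence via the generator $G$.
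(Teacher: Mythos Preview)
Your cycle (i)$\Rw$(ii)$\Rw$(iii)$\Rw$(i) is correct, and the candidate right adjoint $g=\Sup_X\cdot f^{-1}\cdot\yoneda_Y$ coincides with the paper's. The two arguments differ in emphasis rather than substance. For (ii)$\Rw$(iii) you work \emph{objectwise}: each $\psi\in\hat{X}$ gives a weight $\psi:X\kmodto G$, and you chase $f(\Sup_X\psi)\cong\colim(\psi,f)\cong\colim(\psi\kleisli f^*,1_Y)\cong\Sup_Y(\hat{f}\psi)$, then invoke Lemma~\ref{OrdTFun}(1). The paper instead treats $\Sup_X$ \emph{itself} as a single weighted colimit, namely $\Sup_X\cong\colim((\yoneda_X)_*,1_X)$, applies cocontinuity once, and then obtains $(f\cdot\Sup_X)_*=(\Sup_Y\cdot\hat{f})_*$ directly via Lemma~\ref{AdjModLem}; no pointwise reduction is needed. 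For (iii)$\Rw$(i) you verify the unit and counit inequalities in the $2$-category $\Cat{\Tth}$ by whiskering $L\dashv R$ with $\Sup_X$ and $\yoneda_X$; the paper instead computes in $\Mod{\Tth}$, showing in one line that $(\Sup_X\cdot f^{-1}\cdot\yoneda_Y)^*=f_*$ using $\yoneda_Y^*=(\Sup_Y)_*$ and $(f^{-1})^*=\hat{f}_*$. Your route is a bit more elementary and $2$-categorical; the paper's is more uniformly module-theoretic and shorter, since it never leaves the level of $(-)_*$ and $(-)^*$.
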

\begin{proof}
The implication (i)$\Rw$(ii) we proved already in Proposition \ref{SomeColims}. To see that (ii)$\Rw$(iii), recall that $\Sup_X\cong\colim((\yoneda_X)_*,1_X)$ and therefore $f\cdot \Sup_X\cong\colim((\yoneda_X)_*,f)$. With the help of Lemma \ref{AdjModLem}, we get
\[
(f\cdot \Sup_X)_*
=f_*\homkleisliright(\yoneda_X)_*
=(\yoneda_Y^*\kleisli(\yoneda_Y\cdot f)_*)\homkleisliright(\yoneda_X)_*
=\yoneda_Y^*\kleisli((\yoneda_Y\cdot f)_*\homkleisliright(\yoneda_X)_*)
=\yoneda_Y^*\kleisli{\hat{f}}_*=(\Sup_Y\cdot\hat{f})_*.
\]
Finally, to obtain (iii)$\Rw$(i), we show that $f\dashv \Sup_X\cdot f^{-1}\cdot\yoneda_Y$. In fact,
\[
(\Sup_X\cdot f^{-1}\cdot\yoneda_Y)^*
=\yoneda_Y^*\kleisli {f^{-1}}^*\kleisli \Sup_X^*
={\Sup_Y}_*\kleisli {\hat{f}}_*\kleisli \Sup_X^*
=f_*\kleisli {\Sup_X}_*\kleisli \Sup_X^*
=f_*\kleisli \yoneda_X^*\kleisli \Sup_X^*=f_*.\qedhere
\]
\end{proof}
\begin{example}
Recall from Subsection \ref{Kan} that, for each $\Tth$-functor $f:X\to Y$, we have an adjunction $\hat{f}\dashv f^{-1}$ in $\Cat{\Tth}$. The underlying (order-preserving) maps of $\hat{f}$ and $f^{-1}$ are given by $-\kleisli f^*$ and $-\kleisli f_*$ respectively. Furthermore, we have $\hat{\hat{f}}\dashv \widehat{f^{-1}}$. Since $\yoneda_Y\cdot f=\hat{f}\cdot\yoneda_X$, we obtain $\widehat{\yoneda_Y}\cdot\hat{f}=\hat{\hat{f}}\cdot\widehat{\yoneda_X}$ and therefore $\yoneda_X^{-1}\cdot\widehat{f^{-1}}=f^{-1}\cdot\yoneda_Y^{-1}$. Hence, by Theorem \ref{HatVCocompl} and Proposition \ref{CharLeftAdFun}, $f^{-1}$ has a right adjoint $f_\bullet:\hat{X}\to\hat{Y}$ in $\Cat{\Tth}$. The underlying order-preserving map of $f_\bullet$ we identified in Remark \ref{AdjKleisli} as $(-)\homkleisliright f_*$.
\end{example}

The pair of adjoint functors $\CocomplSep{\Tth}\hrw\CatSep{\Tth}$ and $\widehat{(-)}:\CatSep{\Tth}\hrw\CocomplSep{\Tth}$ induces monad on $\CatSep{\Tth}$, denoted as $\mInj=(\widehat{(-)},\yoneda,\mu)$. By Theorem \ref{Kan}, we have that $f\le g$ implies $\hat{f}\le\hat{g}$, so that $\widehat{(-)}$ is a 2-functor. Furthermore, since obviously $\yoneda_{\hat{X}}\cdot\yoneda_X=\yoneda_{\hat{X}}\cdot\yoneda_X$, we have $(\yoneda_{\hat{X}})_*\le(\widehat{\yoneda_X})_*$, that is, $\widehat{\yoneda_X}\le\yoneda_{\hat{X}}$. In general, a monad $\mS=(S,d,l)$ on a locally thin 2-category $\catfont{X}$ is of \emph{Kock-Z\"oberlein type} (see \cite{Koc_MonAd}) if $S$ is a 2-functor and $Sd_X\le d_{SX}$, for all $X\in\catfont{X}$. In fact, in \cite{Koc_MonAd} it is shown that
\begin{theorem}\label{Kock}
Let $\mS=(S,d,l)$ be a monad on a locally thin 2-category $\catfont{X}$ where $S$ is a 2-functor. Then the following assertions are equivalent.
\begin{eqcond}
\item $Sd_X\le d_{SX}$ for all $X\in\catfont{X}$.
\item $Sd_X\dashv l_X$ for all $X\in\catfont{X}$.
\item $l_X\dashv d_{SX}$ for all $X\in\catfont{X}$.
\item For all $X\in\catfont{X}$, a $\catfont{X}$-morphism $h:SX\to X$ is the structure morphism of a $\mS$-algebra if and only if $h\dashv d_X$ with $h\cdot d_X=1_X$.
\end{eqcond}
\end{theorem}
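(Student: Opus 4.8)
The plan is to reduce everything to elementary manipulations of two‑cells, using crucially that in a locally thin $2$-category each hom‑category is a poset: a $2$-cell is just an inequality $\leq$, an adjunction $f\dashv g$ means precisely $1\leq g\cdot f$ and $f\cdot g\leq 1$ (the triangle identities being automatic), and adjoints are unique, so that $f\dashv g$ and $f'\dashv g$ force $f=f'$. The only external ingredients I need are the monad equations $l_X\cdot Sd_X=1_{SX}$, $l_X\cdot d_{SX}=1_{SX}$ and $l_X\cdot Sl_X=l_X\cdot l_{SX}$, the naturality of $d$, and the fact that the $2$-functor $S$ preserves adjunctions. I would establish (i)$\Leftrightarrow$(ii) and (i)$\Leftrightarrow$(iii) by whiskering, then fold in (iv) by a uniqueness‑of‑adjoints argument, closing the cycle through the free algebra.

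For (i)$\Rw$(ii) the unit inequality of $Sd_X\dashv l_X$ \emph{is} the equation $l_X\cdot Sd_X=1_{SX}$, so only the counit $Sd_X\cdot l_X\leq 1_{SSX}$ remains. Applying $S$ to (i) gives $SSd_X\leq Sd_{SX}$; whiskering on the left by $l_{SX}$, then using naturality of $l$ at $d_X$ (which gives $Sd_X\cdot l_X=l_{SX}\cdot SSd_X$) and the unit law $l_{SX}\cdot Sd_{SX}=1_{SSX}$, yields exactly the counit. Conversely, whiskering $Sd_X\cdot l_X\leq 1_{SSX}$ on the right by $d_{SX}$ and cancelling via $l_X\cdot d_{SX}=1_{SX}$ recovers (i). The equivalence (i)$\Leftrightarrow$(iii) is dual: the counit of $l_X\dashv d_{SX}$ is the equation $l_X\cdot d_{SX}=1_{SX}$, and for the unit $1_{SSX}\leq d_{SX}\cdot l_X$ I apply (i) at $SX$, namely $Sd_{SX}\leq d_{SSX}$, whisker on the left by $Sl_X$, and use $Sl_X\cdot Sd_{SX}=S(l_X\cdot d_{SX})=1_{SSX}$ together with naturality of $d$ at $l_X$; the reverse whiskers $1_{SSX}\leq d_{SX}\cdot l_X$ on the right by $Sd_X$ and cancels via $l_X\cdot Sd_X=1_{SX}$.

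Granting the (now equivalent) conditions (i)--(iii), I turn to (iv), recalling that an $\mS$-algebra structure is a morphism $h\colon SX\to X$ with $h\cdot d_X=1_X$ and $h\cdot Sh=h\cdot l_X$. The forward direction is quick: the counit $h\cdot d_X\leq 1_X$ is immediate, and the unit $1_{SX}\leq d_X\cdot h$ follows by whiskering (i) on the left by $Sh$, since $Sh\cdot Sd_X=S(h\cdot d_X)=1_{SX}$ and naturality of $d$ at $h$ gives $d_X\cdot h=Sh\cdot d_{SX}$; hence $h\dashv d_X$. The converse is the main obstacle: assuming $h\dashv d_X$ with $h\cdot d_X=1_X$, I must recover $h\cdot Sh=h\cdot l_X$, and here a direct inequality chase stalls, because the retraction equalities $h\cdot d_X=1_X$ and its image $Sh\cdot Sd_X=1_{SX}$ collapse the obvious whiskerings to tautologies. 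The resolution is to compare the two sides as \emph{adjoints}. Since $S$ preserves adjunctions, $Sh\dashv Sd_X$; composing with $h\dashv d_X$ gives $h\cdot Sh\dashv Sd_X\cdot d_X$. On the other hand, (iii) gives $l_X\dashv d_{SX}$, and composing with $h\dashv d_X$ gives $h\cdot l_X\dashv d_{SX}\cdot d_X$. But naturality of $d$ at $d_X$ yields $Sd_X\cdot d_X=d_{SX}\cdot d_X$, so $h\cdot Sh$ and $h\cdot l_X$ are two left adjoints of one and the same morphism; by uniqueness of adjoints they coincide, which is the missing associativity law.

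Finally I close the cycle by applying the equivalence (iv) to a canonical algebra. The free algebra on $X$ has carrier $SX$ and structure map $l_X\colon SSX\to SX$, whose algebra laws are precisely $l_X\cdot d_{SX}=1_{SX}$ and $l_X\cdot Sl_X=l_X\cdot l_{SX}$. Instantiating (iv) at the object $SX$, the fact that $l_X$ is an algebra structure forces $l_X\dashv d_{SX}$, which is (iii). Thus (iv)$\Rw$(iii), and combined with (i)$\Leftrightarrow$(ii)$\Leftrightarrow$(iii) and (iii)$\Rw$(iv), all four assertions are equivalent.
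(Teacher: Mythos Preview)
The paper does not give its own proof of this theorem: it is quoted from \cite{Koc_MonAd} and used as a black box, so there is nothing in the paper to compare your argument against. That said, your proof is a correct and standard elementary derivation of the equivalences. The key moves --- whiskering the inequality $Sd_X\le d_{SX}$ against the monad unit and multiplication laws to produce the missing unit/counit inequalities, and deducing the associativity law $h\cdot Sh=h\cdot l_X$ by exhibiting both sides as left adjoints of $Sd_X\cdot d_X=d_{SX}\cdot d_X$ --- are exactly the ones used in Kock's original argument.

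One small caveat: you state that in a locally thin $2$-category the hom-categories are \emph{posets}, and hence adjoints are literally unique. Strictly, ``locally thin'' only guarantees that hom-categories are \emph{preorders}, so uniqueness of adjoints gives a priori only $h\cdot Sh\cong h\cdot l_X$ (mutual inequality), not equality. This is harmless here: in the paper's application the ambient $2$-category is $\CatSep{\Tth}$, where L-separation makes the order on hom-sets antisymmetric, and Kock's result is in any case customarily read up to isomorphism. Your closing step (iv)$\Rightarrow$(iii), instantiating (iv) at the free algebra $(SX,l_X)$, is clean and correct.
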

The considerations above tell us that the monad $\mInj=(\widehat{(-)},\yoneda,\mu)$ on $\CatSep{\Tth}$ is of Kock-Z\"oberlein type. Furthermore, by Theorem \ref{CharCocompl} and Proposition \ref{CharLeftAdFun} we have
\begin{theorem}
$(\CatSep{\Tth})^\mInj\cong\CocomplSep{\Tth}$. Hence, in particular, $\CocomplSep{\Tth}$ is complete.
\end{theorem}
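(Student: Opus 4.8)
The plan is to run the Eilenberg--Moore description through the Kock--Z\"oberlein machinery of Theorem \ref{Kock}, using the fact, established in the preceding discussion, that the monad $\mInj=(\widehat{(-)},\yoneda,\mu)$ on $\CatSep{\Tth}$ is of Kock--Z\"oberlein type. By part (iv) of Theorem \ref{Kock}, a $\Tth$-functor $h:\hat{X}\to X$ is the structure morphism of an $\mInj$-algebra on an L-separated $X$ precisely when $h\dashv\yoneda_X$ and $h\cdot\yoneda_X=1_X$. Comparing this with conditions (iii) and (ii) of Theorem \ref{CharCocompl}, I see that such an $h$ exists if and only if $X$ is cocomplete, in which case necessarily $h=\Sup_X$: the left adjoint--left inverse to the Yoneda embedding is unique, and L-separation upgrades the equivalence $\Sup_X\cdot\yoneda_X\cong 1_X$ to an equality. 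Thus the objects of $(\CatSep{\Tth})^{\mInj}$ are exactly the L-separated cocomplete $\Tth$-categories, each carrying a unique algebra structure.

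The heart of the proof, and where I expect the real work to lie, is the matching of morphisms. An $\mInj$-algebra homomorphism $f:(X,\Sup_X)\to(Y,\Sup_Y)$ is a $\Tth$-functor satisfying $f\cdot\Sup_X=\Sup_Y\cdot\hat{f}$. I would first observe that one of the two inequalities comprising this equation is automatic: using $\hat{f}\cdot\yoneda_X=\yoneda_Y\cdot f$ together with the unit $1_{\hat{X}}\le\yoneda_X\cdot\Sup_X$ of the adjunction $\Sup_X\dashv\yoneda_X$ (both recorded in the proof of Theorem \ref{CharCocompl} and after Theorem \ref{Kan}), one computes
\[
\Sup_Y\cdot\hat{f}\le\Sup_Y\cdot\hat{f}\cdot\yoneda_X\cdot\Sup_X=\Sup_Y\cdot\yoneda_Y\cdot f\cdot\Sup_X=f\cdot\Sup_X,
\]
valid for every $\Tth$-functor $f$ between such algebras. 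Consequently the homomorphism condition reduces to the reverse inequality $f\cdot\Sup_X\le\Sup_Y\cdot\hat{f}$, i.e.\ to $f\cdot\Sup_X\cong\Sup_Y\cdot\hat{f}$. But this is exactly condition (iii) of Proposition \ref{CharLeftAdFun}, which is there shown to be equivalent to $f$ being left adjoint. Hence the $\mInj$-algebra homomorphisms are precisely the left adjoint $\Tth$-functors, i.e.\ the morphisms of $\CocomplSep{\Tth}$. Since objects and morphisms correspond bijectively and the correspondence respects composition and identities, I obtain the isomorphism of categories $(\CatSep{\Tth})^{\mInj}\cong\CocomplSep{\Tth}$.

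It then remains to deduce completeness. The forgetful functor $(\CatSep{\Tth})^{\mInj}\to\CatSep{\Tth}$ creates limits, as it does for the Eilenberg--Moore category of any monad, so it suffices that $\CatSep{\Tth}$ itself be complete. This holds because the L-separated $\Tth$-categories form a reflective full subcategory closed under limits in $\Cat{\Tth}$ (see \cite{HT_LCls}), and $\Cat{\Tth}$ is complete since $\ForgetSET:\Cat{\Tth}\to\SET$ is topological. Transporting completeness across the isomorphism just established yields completeness of $\CocomplSep{\Tth}$.

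The step I anticipate as the main obstacle is the morphism correspondence: one must isolate the automatic Kock--Z\"oberlein inequality and then recognise the remaining condition as Proposition \ref{CharLeftAdFun}(iii). The delicate point is to guarantee that the equivalences $\cong$ appearing in the $2$-categorical statements collapse to genuine equalities on $\CatSep{\Tth}$, so that one obtains a strict isomorphism of categories rather than a mere equivalence. Both the identification $\hat{f}\cdot\yoneda_X=\yoneda_Y\cdot f$ (as an equality, not just an equivalence) and the uniqueness of the algebra structure rely crucially on L-separation, and these are the points I would verify most carefully.
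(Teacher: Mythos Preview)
Your proposal is correct and follows essentially the same approach as the paper: the paper's proof is the single sentence ``by Theorem \ref{CharCocompl} and Proposition \ref{CharLeftAdFun}'', and you have faithfully unpacked exactly that, using Theorem \ref{Kock}(iv) to identify $\mInj$-algebras with cocomplete L-separated $\Tth$-categories via Theorem \ref{CharCocompl}, and Proposition \ref{CharLeftAdFun}(iii) to identify homomorphisms with left adjoint $\Tth$-functors. Your additional observation that one inequality $\Sup_Y\cdot\hat{f}\le f\cdot\Sup_X$ is automatic is a pleasant refinement (the standard ``lax morphisms come for free'' phenomenon for KZ monads), though strictly speaking it is not needed: since $Y$ is L-separated, the $\cong$ in Proposition \ref{CharLeftAdFun}(iii) already coincides with the equality required of an algebra homomorphism.
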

Theorem \ref{Kock} also helps us to compute the multiplication $\mu$ of $\mInj$: for any (L-separated) $\Tth$-category $X$ we have $\widehat{\yoneda_X}\dashv\mu_X$ and $\widehat{\yoneda_X}\dashv\yoneda_X^{-1}$, hence $\mu_X=\yoneda_X^{-1}$.

\subsection{Example: topological spaces}
We consider now $\Tth=\Uth_\two=\sptheory$. Hence $\Cat{\Tth}=\TOP$ is the category of topological spaces and continuous maps, and $\CatSep{\Tth}=\TOP_0$ its full subcategory of T$_0$-spaces (see also \cite{CH_Compl,HT_LCls}). Then $M(X)=(UX,\le)$ is the ordered set with
\[
 \frx\le\fry \iff \{\overline{A}\mid A\in\frx\}\subseteq\fry,
\]
and the topology on $|X|$ is given by the Zariski-closure defined by
\[
 \frx\in\cl\calA :\iff \bigcap\calA\subseteq \frx\iff \frx\subseteq\bigcup\calA.
\]
In \cite{HT_LCls} we observed already that the down-closure as well as the up-closure of a Zariski-closed set is again Zariski-closed. A presheaf $\psi\in\hat{X}$ can be identified with the Zariski-closed and down-closed subset $\calA=\psi^{-1}(1)\subseteq UX$, and we consider 
\[
\hat{X}=\{\calA\subseteq UX\mid \text{ $\calA$ is Zariski-closed and down-closed}\}.
\]
The topology on $\hat{X}$ is the \emph{compact-open topology}, which has as basic open sets
\begin{align*}
B(\calB,\{0\})=\{\calA\in\hat{X}\mid \calA\cap\calB=\varnothing\},&&\text{$\calB\subseteq UX$ Zariski-closed.} 
\end{align*}
The Yoneda map $\yoneda_X:X\to\hat{X}$ is given by $\yoneda_X(x)=\{\frx\in UX\mid \frx\rightarrow x\}$. For $\frx\in UX$, $U\yoneda_X(\frx)$ is the ultrafilter generated by the sets
\begin{align*}
\{\{\fra\mid \fra\rightarrow x\}\mid x\in A\} &&(A\in\frx),
\end{align*}
and the Yoneda lemma (Corollary \ref{YonedaLem}) states that it converges to $\calA\in\hat{X}$ precisely if $\frx\in\calA$.

We have maps
\begin{align*}
\Phi_X:P(UX)\to FX,\,\calA\mapsto\bigcap\calA &&\text{and}&&
\Pi_X:FX\to P(UX),\,\frf\mapsto\{\frx\in UX\mid \frf\subseteq\frx\}.
\end{align*}
where $P(UX)$ denotes the powerset of $UX$ and $FX$ the set of all (possibly improper) filters on $X$. Clearly, we have $\frf=\Phi_X(\Pi_X(\frf))$ and $\calA\subseteq\Pi_X(\Phi_X(\calA))$ for $\frf\in FX$ and $\calA\in P(UX)$. Furthermore, $\calA=\Pi_X(\Phi_X(\calA))$ if and only if $\calA$ is Zariski-closed. We let $F_0X$ denote the set of all filters on the lattice $\tau$ of open sets of a topological space $X$, and $F_1X$ the set of all filters on the lattice $\sigma$ of closed sets of $X$. For each filter $\frf$ on $X$ we can consider $\frf\cap\tau\in F_0X$ and $\frf\cap\sigma\in F_1X$, and $\frf$ is determined by this restriction precisely if $\frf$ has a basis of open respectively closed sets. In \cite{HT_LCls} we showed that $\frf=\bigcap\calA$ has a basis of open sets if and only if $\calA$ is down-closed, and $\frf$ has a 
basis of closed sets if and only if $\calA$ is up-closed. Hence
\begin{align*}
\hat{X}\cong F_0X &&\text{and}&& \{\calA\subseteq UX\mid\text{ $\calA$ is Zariski-closed and up-closed}\}\cong F_1 X,
\end{align*}
and the first homeomorphism we also denote as $\Phi_X:\hat{X}\to F_0 X,\,\calA\mapsto (\bigcap\calA)\cap\tau$. Let $B(\calB,\{0\})$ be a basic open set of the topology of $\hat{X}$. Since $B(\calB,\{0\})=B(\upc\calB,\{0\})$, we can assume that $\calB$ is up-closed. Hence, under the bijections above, $F_0(X)$ has
\begin{align*}
\{\frf\in F_0(X)\mid \exists A\in\frf,B\in\frg\,.\,A\cap B=\varnothing\}&&(\frg\in F_1(X))
\end{align*}
as basic open sets. Clearly, it is enough to consider $\frg=\doo{B}$ the principal filter induced by a closed set $B$, so that all sets
\begin{align*}
\{\frf\in F_0(X)\mid \exists A\in\frf\,.\,A\cap B=\varnothing\}=\{\frf\in F_0(X)\mid X\setminus B\in\frf\}&& \text{($B\subseteq X$ closed)}
\end{align*}
form a basis for the topology on $F_0(X)$. We have shown that our presheaf space $\hat{X}$ is homeomorphic to the filter space $F_0(X)$ considered in \cite{Esc_InjSp}. Furthermore, for a continuous map $f:X\to Y$, $f^{-1}:\hat{Y}\to\hat{X}$ corresponds to $f^{-1}:F_0 Y\to F_0 X,\,\frg\mapsto\{f^{-1}(B)\mid B\in\frg\}$ in the sense that the diagram
\[
\xymatrix{\hat{Y}\ar[r]^{\Phi_Y}\ar[d]_{f^{-1}} & F_0 Y\ar[d]^{f^{-1}}\\ \hat{X}\ar[r]_{\Phi_X} & F_0 X}
\]
commutes. Hence, since $\hat{f}\dashv f^{-1}$ as well as $F_0 f\dashv f^{-1}$, $\Phi=(\Phi_X)_X$ is a natural isomorphism from $\widehat{(-)}:\TOP_0\to\TOP_0$ to $F_0:\TOP_0\to\TOP_0$. Since $\Phi_X(\yoneda(x))=\{U\in\tau\mid x\in U\}$ is the neighborhood filter of $x\in X$, the monad $\mInj=(\widehat{(-)},\yoneda,\yoneda^{-1})$ is isomorphic to the filter monad on $\TOP_0$ considered in \cite{Esc_InjSp}.

\subsection{Cocomplete $\Tth$-categories are algebras over $\SET$ and $\CatSep{\V}$}\label{MonOverSet}
We are now aiming to prove that the forgetful functor
\[
G:\CocomplSep{\Tth}\to\SET
\]
is monadic. Clearly, $G$ has a left adjoint given by the composite
\[
 \SET\xrightarrow{\hspace{1em}\text{discrete}\hspace{1em}}\CatSep{\Tth}
\xrightarrow{\hspace{1em}\widehat{(-)}\hspace{1em}}\CocomplSep{\Tth}.
\]
Furthermore, we have the following elementary facts.
\begin{lemma}
Let $f:X\to Y$ and $g:Y\to X$ be $\Tth$-functors with $f\dashv g$ where $X$, $Y$ are L-separated.
\begin{enumerate}
\item The following assertions are equivalent.
\begin{enumerate}
\renewcommand{\theenumii}{\roman{enumii}}
\item $f$ is an epimorphism in $\CatSep{\Tth}$.
\item $f\cdot g=1_Y$.
\item $f$ is a split epimorphism in $\CatSep{\Tth}$.
\end{enumerate}
\item The following assertions are equivalent.
\begin{enumerate}
\renewcommand{\theenumii}{\roman{enumii}}
\item $f$ is a monomorphism in $\CatSep{\Tth}$.
\item $g\cdot f=1_X$.
\item $f$ is a split monomorphism in $\CatSep{\Tth}$.
\end{enumerate}
\end{enumerate}
\end{lemma}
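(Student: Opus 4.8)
The plan is to extract a single triangle identity from the adjunction and then let the definitions of epimorphism and monomorphism do all the work. First I would record that, since $\CatSep{\Tth}$ is a locally ordered 2-category and $f\dashv g$ means $1_X\le g\cdot f$ together with $f\cdot g\le 1_Y$, monotonicity of composition gives $f=f\cdot 1_X\le f\cdot g\cdot f$ on the one hand and $f\cdot g\cdot f\le 1_Y\cdot f=f$ on the other. Thus $f\cdot g\cdot f\cong f$, and because the codomain $Y$ is L-separated this equivalence is in fact the strict equality $f\cdot g\cdot f=f$. (The symmetric argument yields $g\cdot f\cdot g=g$, but only the first identity is needed below.)

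For part (1), the implications (iii)$\Rw$(i) and (ii)$\Rw$(iii) are purely formal: every split epimorphism is an epimorphism, and the equation $f\cdot g=1_Y$ exhibits $g$ as a section of $f$, so that $f$ splits. The only content lies in (i)$\Rw$(ii), which I would obtain by reading the triangle identity as $(f\cdot g)\cdot f=1_Y\cdot f$ and cancelling $f$ on the right using that $f$ is an epimorphism in $\CatSep{\Tth}$; this yields $f\cdot g=1_Y$.

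Part (2) is dual and uses the very same identity. Here (iii)$\Rw$(i) and (ii)$\Rw$(iii) are again formal (split monomorphisms are monomorphisms, and $g\cdot f=1_X$ makes $g$ a retraction of $f$). For (i)$\Rw$(ii) I would instead read $f\cdot g\cdot f=f$ as $f\cdot(g\cdot f)=f\cdot 1_X$ and cancel $f$ on the left, now invoking that $f$ is a monomorphism, to conclude $g\cdot f=1_X$.

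I do not expect a genuine obstacle here: the statement is the order-enriched shadow of the classical fact that a left adjoint is (split) epi exactly when its counit is invertible, and dually a left adjoint is (split) mono exactly when its unit is invertible. The single point demanding care is the upgrade of $f\cdot g\cdot f\cong f$ to the strict equality $f\cdot g\cdot f=f$, which is precisely where L-separation of $Y$ is used; without it one would only recover $f\cdot g\cong 1_Y$ and $g\cdot f\cong 1_X$, and these equivalences are too weak to feed the strict cancellations built into the notions of epimorphism and monomorphism in $\CatSep{\Tth}$.
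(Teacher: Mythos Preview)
Your proof is correct and follows the same approach as the paper: the paper's argument also derives $f\cdot g\cdot f=f$ from the adjunction and then cancels $f$ on the appropriate side using the epi/mono hypothesis. You simply spell out in more detail how the identity $f\cdot g\cdot f=f$ arises from the unit and counit inequalities together with L-separation of $Y$, a step the paper records in a single clause.
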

\begin{proof}
From $f\dashv g$ we obtain $f\cdot g\cdot f=f$. If $f$ is an epimorphism in $\CatSep{\Tth}$, then $f\cdot g=1_Y$; if $f$ is a monomorphism in $\CatSep{\Tth}$, then $g\cdot f=1_X$.
\end{proof}
\begin{corollary}
$G$ reflects isomorphisms.
\end{corollary}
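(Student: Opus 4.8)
The plan is to exploit the defining feature of $\CocomplSep{\Tth}$, namely that its morphisms are left adjoint $\Tth$-functors, so that the preceding Lemma becomes directly applicable. Let $f:X\to Y$ be a morphism in $\CocomplSep{\Tth}$ for which $Gf$ is a bijection. Being left adjoint, $f$ comes equipped with a right adjoint $\Tth$-functor $g:Y\to X$, $f\dashv g$, and moreover both $X$ and $Y$ are L-separated. Thus the hypotheses of the previous Lemma are met.

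First I would observe that the forgetful functor $\Cat{\Tth}\to\SET$ is faithful: a $\Tth$-functor is just a function subject to a condition, so two $\Tth$-functors with the same underlying map coincide. Consequently its restriction to $\CatSep{\Tth}$ is faithful as well. Since a bijection in $\SET$ is simultaneously a monomorphism and an epimorphism, and faithful functors reflect both monomorphisms and epimorphisms, the $\Tth$-functor $f$ is at once a monomorphism and an epimorphism in $\CatSep{\Tth}$.

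Next I would feed this into the previous Lemma. From $f$ being an epimorphism in $\CatSep{\Tth}$ one obtains $f\cdot g=1_Y$, while from $f$ being a monomorphism one obtains $g\cdot f=1_X$. Hence $g$ is a two-sided inverse of $f$, and $f$ is an isomorphism in $\CatSep{\Tth}$ (equivalently, in $\Cat{\Tth}$), with inverse $g$.

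Finally, to upgrade this to an isomorphism in $\CocomplSep{\Tth}$ itself, it remains only to verify that the inverse $g=f^{-1}$ is again a morphism of $\CocomplSep{\Tth}$, i.e.\ a left adjoint $\Tth$-functor. This is automatic, and is the one point I would flag as needing a word of care: from $g\cdot f=1_X$ and $f\cdot g=1_Y$ we also read off $g\dashv f$, so $g$ is left adjoint (with right adjoint $f$) and therefore lies in $\CocomplSep{\Tth}$. Thus $f$ is an isomorphism in $\CocomplSep{\Tth}$, and $G$ reflects isomorphisms. There is no genuine obstacle here beyond assembling faithfulness, the Lemma, and the remark that an inverse pair is automatically adjoint in both directions.
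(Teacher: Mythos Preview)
Your proof is correct and follows essentially the same route as the paper's: use the preceding Lemma (together with faithfulness of the forgetful functor to $\SET$) to turn bijectivity of a left adjoint $f$ into the equalities $f\cdot g=1_Y$ and $g\cdot f=1_X$, and then observe that the inverse is again left adjoint. The paper compresses all of this into a single sentence, but the underlying argument is the one you wrote out.
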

\begin{proof}
If $f:X\to Y$ in $\CocomplSep{\Tth}$ is bijective, then $f$ is an isomorphism in $\CatSep{\Tth}$ and therefore also in $\CocomplSep{\Tth}$.
\end{proof}
In order to conclude that $G$ is monadic, it is left to show that $\CocomplSep{\Tth}$ has and $G$ preserves coequaliser of $G$-equivalence relations (see, for instance, \cite[Corollary 2.7]{MS_Monads}). Hence, let $\pi_1,\pi_2:R\rightrightarrows X$ in $\CocomplSep{\Tth}$ be an equivalence relation in $\SET$, where $\pi_1$ and $\pi_2$ are the projection maps. Let $q:X\to Q$ be its coequaliser in $\Cat{\Tth}$. The following fact will be crucial in the sequel:
\begin{equation}\label{SplitFork}
\xymatrix{\hat{R}\ar@<0.5ex>[r]^{\widehat{\pi_1}}\ar@<-0.5ex>[r]_{\widehat{\pi_2}} &\hat{X}\ar[r]^{\hat{q}} &\hat{Q}}
\text{\hspace{1em} is a split fork in $\CatSep{\Tth}$.}
\end{equation}
The splitting here is given by $q^{-1}:\hat{Q}\to\hat{X}$ and $\pi_1^{-1}:\hat{X}\to\hat{R}$. First note that, since both $\pi_1$ and $q$ are surjective, we have $\hat{q}\cdot q^{-1}=1$ and $\widehat{\pi_1}\cdot\pi_1^{-1}=1$. Hence, in order to obtain \eqref{SplitFork}, we need to show
\[
 q^{-1}\cdot \hat{q}=\widehat{\pi_2}\cdot\pi_1^{-1}.
\]
Note that we have $\hat{q}=\hat{q}\cdot\widehat{\pi_1}\cdot\pi_1^{-1}= \hat{q}\cdot\widehat{\pi_2}\cdot\pi_1^{-1}$, and therefore
\[
q^{-1}\cdot \hat{q}= q^{-1}\cdot\hat{q}\cdot\widehat{\pi_2}\cdot\pi_1^{-1} \ge \widehat{\pi_2}\cdot\pi_1^{-1}.
\]
We will give a proof for \eqref{SplitFork} at the end of this subsection, and show first how \eqref{SplitFork} can be used to prove monadicity of $G$. Observe first that, being a split fork,
\[
\xymatrix{\hat{R}\ar@<0.5ex>[r]^{\widehat{\pi_1}}\ar@<-0.5ex>[r]_{\widehat{\pi_2}} &\hat{X}\ar[r]^{\hat{q}} &\hat{Q}}
\]
is a coequaliser diagram in $\Cat{\Tth}$ and $\CatSep{\Tth}$. Hence, there is a $\Tth$-functor $\Sup_Q:\hat{Q}\to Q$ with $\Sup_Q\cdot\hat{q}=q\cdot \Sup_X$ and $\Sup_Q\cdot\yoneda_Q=1_Q$. The situation is depicted below.
\[
\xymatrix{R\ar@<0.5ex>[r]^{\pi_1}\ar@<-0.5ex>[r]_{\pi_2}\ar[d]_{\yoneda_R} & X\ar[r]^q\ar[d]_{\yoneda_X} & Q\ar[d]^{\yoneda_Q}\ar@/^3pc/[dd]^{1_Q}\\
\hat{R}\ar@<0.5ex>[r]^{\widehat{\pi_1}}\ar@<-0.5ex>[r]_{\widehat{\pi_2}}\ar[d]_{\Sup_R} &\hat{X}\ar[r]^{\hat{q}}\ar[d]_{\Sup_X} &\hat{Q}\ar@{..>}[d]^{\Sup_Q}\\
R\ar@<0.5ex>[r]^{\pi_1}\ar@<-0.5ex>[r]_{\pi_2} & X\ar[r]^q & Q}
\]
We conclude that $Q$ is L-separated and cocomplete, and $q:X\to Q$ is cocontinuous. Next we show that
\[
\xymatrix{R\ar@<0.5ex>[r]^{\pi_1}\ar@<-0.5ex>[r]_{\pi_2} & X\ar[r]^q & Q}
\]
is indeed a coqualiser diagram in $\CocomplSep{\Tth}$. Note that
\[
\xymatrix{\hat{R}\ar@<0.5ex>[r]^{\widehat{\pi_1}}\ar@<-0.5ex>[r]_{\widehat{\pi_2}} &\hat{X}\ar[r]^{\hat{q}} &\hat{Q}}
\]
is a coequaliser diagram in $\CocomplSep{\Tth}$ since $\widehat{(-)}:\Cat{\Tth}\to\CocomplSep{\Tth}$ is left adjoint. Let $h:X\to Y$ be a cocontinuous $\Tth$-functor with cocomplete codomain such that $h\cdot\pi_1=h\cdot\pi_2$. Then there exists a cocontinuous $\Tth$-functor $f:\hat{Q}\to Y$ such that $f\cdot\hat{q}=h\cdot \Sup_X$. We consider now $f\cdot\yoneda_Q:Q\to Y$. Then 
\[
f\cdot\yoneda_Q\cdot q=f\cdot\hat{q}\cdot\yoneda_X=h\cdot \Sup_X\cdot\yoneda_X=h.
\]
Furthermore,
\begin{align*}
\Sup_Y\cdot\hat{f}\cdot\widehat{\yoneda_Q}\cdot\hat{q}
&= f\cdot \Sup_{\hat{Q}}\cdot\widehat{\yoneda_Q}\cdot\hat{q}\\
&= f\cdot\hat{q} &&\text{($\Sup_{\hat{Q}}=\mu_Q$ the multiplication of the monad $\mInj$)}\\
&= h\cdot \Sup_X\\
&= f\cdot \yoneda_Q\cdot q\cdot \Sup_X\\
&= f\cdot \yoneda_Q\cdot \Sup_Q\cdot \hat{q},
\end{align*}
and therefore $\Sup_Y\cdot\widehat{f\cdot\yoneda_Q}=f\cdot\yoneda_Q\cdot \Sup_Q$, i.e.\ $f\cdot\yoneda_Q$ is cocontinuous.
\begin{remark}
Being cocontinuous, $f\cdot\yoneda_Q$ is left adjoint. In fact, one can directly show $f\cdot\yoneda_Q\dashv q\cdot l$, where $l:Y\to X$ is right adjoint to $h:X\to Y$. To do so, let $g:Y\to\hat{Q}$ be right adjoint to $f:\hat{Q}\to Y$. Then $\yoneda_X\cdot l=q^{-1}\cdot g$, and therefore
\begin{align*}
g=\hat{q}\cdot\yoneda_X\cdot l&&\text{and}&& l=\Sup_X\cdot q^{-1}\cdot g.
\end{align*}
Hence, we have
\begin{align*}
&f\cdot\yoneda_Q\cdot q\cdot l =f\cdot\hat{q}\cdot\yoneda_X\cdot l=f\cdot g\le 1_Y\\
\intertext{and}
&q\cdot l\cdot f\cdot\yoneda_Q = q\cdot \Sup_X\cdot q^{-1}\cdot g\cdot f\cdot\yoneda_Q
\ge q\cdot \Sup_X\cdot q^{-1} \yoneda_Q=\Sup_Q\cdot\hat{q}\cdot q^{-1} \yoneda_Q=1_Q.
\end{align*}
\end{remark}

Finally, we prove \eqref{SplitFork}. Let $\pi_1,\pi_2:R\rightrightarrows X$ be an equivalence relation in $\SET$, and $q:X\to Q$ its quotient. We typically write $x\sim x'$ for $(x,x')\in R$. Furthermore, for $\frx,\frx'\in TX$ we write $\frx\sim\frx'$ whenever the pair $(\frx,\frx')$ belongs to the kernel relation of $Tq$. Since $T$ has \BC, we have
\[
\frx\sim\frx'\iff \exists\frw\in TR\,.\,(T\pi_1(\frw)=\frx)\,\&\,(T\pi_2(\frw)=\frx').
\]
Furthermore, we have to warn the reader that, when talking about an equivalence relation $\pi_1,\pi_2:R\rightrightarrows X$ in $\Cat{\Tth}$ or $\CatSep{\Tth}$, we always include that the canonical map $R\hrw X\times X$ is an embedding (and not just a monomorphism). Clearly, a sub-$\Tth$-category $R\hrw X\times X$ is an equivalence relation in $\Cat{\Tth}$ respectively in $\CatSep{\Tth}$ if and only if it is an equivalence relation in $\SET$. 
\begin{lemma}
Let $X=(X,a)$ be a L-separated $\Tth$-category and $\pi_1,\pi_2:R\rightrightarrows X$ be an equivalence relation in $\CatSep{\Tth}$. In addition, assume that $\pi_2\dashv\rho_2$\footnote{Note that, since $R$ is symmetric, $\pi_1$ is left adjoint precisely if $\pi_2$ is so.}. Then, for all $\frx,\frx'\in TX$ with $\frx\sim\frx'$ and all $x'\in X$, there exists $x\in X$ such that $x\sim x'$ and $a(\frx',x')\le a(\frx,x)$.
\end{lemma}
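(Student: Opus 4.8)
The plan is to translate the hypothesis $\pi_2\dashv\rho_2$ into an elementwise identity on the structures and then to read off the required point $x$ as the first component of $\rho_2(x')$. Write $a$ for the structure on $X$ and $c$ for the structure on $R$. Since $R\hookrightarrow X\times X$ is an embedding of $\Tth$-categories, $c$ is the initial structure for the two projections, so for $\frw\in TR$ and $(u,v)\in R$ one has
\[
c(\frw,(u,v))=a(T\pi_1(\frw),u)\wedge a(T\pi_2(\frw),v).
\]
Translating $\pi_2\dashv\rho_2$ into $\Mod{\Tth}$ gives $(\pi_2)_*=(\rho_2)^*$, which elementwise reads $a(T\pi_2(\frw),x)=c(\frw,\rho_2(x))$ for all $\frw\in TR$ and $x\in X$.

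Next I would run the main computation. Given $\frx\sim\frx'$, choose using \BC\ an element $\frw\in TR$ with $T\pi_1(\frw)=\frx$ and $T\pi_2(\frw)=\frx'$, and write $\rho_2(x')=(u,v)$. Evaluating the adjunction identity at $\frw$ and $x'$, and then expanding $c$ with the formula above, yields
\[
a(\frx',x')=a(T\pi_2(\frw),x')=c(\frw,(u,v))=a(\frx,u)\wedge a(\frx',v)\le a(\frx,u).
\]
I therefore take $x:=u=\pi_1(\rho_2(x'))$, so that $a(\frx',x')\le a(\frx,x)$ holds by construction.

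It remains to verify $x\sim x'$, and the crux here is to show $v=x'$; once this is known, $\rho_2(x')=(u,x')\in R$ immediately gives $x=u\sim x'$. For this I would exploit the unit $1_R\le\rho_2\cdot\pi_2$ and the counit $\pi_2\cdot\rho_2\le 1_X$ of the adjunction. Evaluating the unit via Lemma \ref{OrdTFun}(1) at the diagonal element $(x',x')\in R$ (which lies in $R$ by reflexivity), and rewriting $T\pi_i\cdot e_R=e_X\cdot\pi_i$ by naturality of $e$, produces $k\le a(e_X(x'),v)$, i.e.\ $x'\le v$ in the underlying order of $X$; the counit gives $k\le a(e_X(v),x')$, i.e.\ $v\le x'$. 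Since $X$ is L-separated, its underlying $\V$-category is L-separated (Lemma \ref{OrdTFun}(2)) and hence its order is antisymmetric, so $v=x'$, which finishes the argument.

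The step I expect to be the main obstacle is precisely the identification $\pi_2(\rho_2(x'))=x'$: it is what converts the inequality handed to us by the adjunction formula into a statement about the \emph{prescribed} $\sim$-class of $x'$, and it is here that reflexivity of $R$, both the unit and the counit of the adjunction, and L-separatedness of $X$ must be combined. Everything else is the routine bookkeeping of unwinding the embedding/product structure on $R$ and the defining formulas of $(\pi_2)_*$ and $(\rho_2)^*$.
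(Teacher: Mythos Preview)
Your argument is correct and follows the same line as the paper: pick $\frw\in TR$ witnessing $\frx\sim\frx'$, use $(\pi_2)_*=(\rho_2)^*$ to rewrite $a(\frx',x')$ as $c(\frw,\rho_2(x'))$, and expand via the product structure on $R\hookrightarrow X\times X$. The only difference is in establishing $\pi_2(\rho_2(x'))=x'$: the paper simply notes that $\pi_2$ is surjective (reflexivity of $R$) and invokes the earlier lemma that a surjective left adjoint in $\CatSep{\Tth}$ is split epi, giving $\pi_2\cdot\rho_2=1_X$ at once, whereas you re-derive this identity pointwise from the unit, the counit, and L-separatedness---which is exactly how that earlier lemma is proved.
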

\begin{proof}
Since $\pi_2$ is surjective, we have $\pi_2\cdot\rho_2=1_X$. Let $\frw\in TR$ such that $T\pi_1(\frw)=\frx$ and $T\pi_2(\frw)=\frx'$. Then
\begin{align*}
a(\frx',x')&=a(T\pi_2(\frw),x')\\
&= a\times a(\frw,\rho_2(x')) &&(\rho_2(x')=(x,x') \text{ for some $x\sim x'$})\\
&=a(\frx,x)\wedge a(\frx',x'),
\end{align*}
hence $a(\frx',x')\le a(\frx,x)$.
\end{proof}
Our next goal is to describe the quotient $q:X\to Q$ of $\pi_1,\pi_2:R\rightrightarrows X$ in $\Cat{\Tth}$. In general, the quotient structure in $\Cat{\Tth}$ is difficult to handle, see \cite{Hof_Quot} for details. The situation is much better in $\Gph{\Tth}$, the category of $\Tth$-graphs and $\Tth$-graph morphisms. Here a $\Tth$-graph is a pair $(X,a)$ consisting of a set $X$ and a $\Tth$-relation $a:X\krelto X$ satisfying $e_X^\circ\le a$, and $\Tth$-graph morphisms are defined as $\Tth$-functors. Clearly, we have a full embedding $\Cat{\Tth}\hrw\Gph{\Tth}$. A surjective $\Tth$-graph morphism $f:(X,a)\to(Y,b)$ is a quotient in $\Gph{\Tth}$ if and only if $b=f\cdot a\cdot Tf^\circ$ (see also \cite{CH_TopFeat}), and the full embedding $\Cat{\Tth}\hrw\Gph{\Tth}$ reflects quotients. Furthermore, we call a $\Tth$-graph morphism (or a $\Tth$-functor) $f$ \emph{proper} if $b\cdot Tf=f\cdot a$ (see \cite{CH_EffDesc}). One easily verifies that, if $f:X\to Y$ is a proper surjection, then $f$ is a quotient in $\Gph{\Tth}$, and with $X$ also $Y$ is a $\Tth$-category.
\begin{corollary}\label{QuotProper}
Consider the same situation as in the lemma above. Let $q:X\to Q$ be the quotient of $\pi_1,\pi_2:R\rightrightarrows X$ in $\Gph{\Tth}$. Then $q$ is proper, and therefore $Q$ is a $\Tth$-category and $q:X\to Q$ is the quotient of $\pi_1,\pi_2:R\rightrightarrows X$ in $\Cat{\Tth}$.
\end{corollary}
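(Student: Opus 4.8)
The plan is to show that $q$ is \emph{proper}, i.e.\ that $b\cdot Tq=q\cdot a$, where $b=q\cdot a\cdot Tq^\circ$ is the $\Tth$-graph structure carried by the quotient $q$ in $\Gph{\Tth}$. Since $q$ is a $\Tth$-graph morphism, hence a $\Tth$-functor, the inequality $q\cdot a\le b\cdot Tq$ holds automatically; so only the reverse inequality $b\cdot Tq\le q\cdot a$ requires an argument.

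I would carry out this inequality pointwise. Unwinding $b=q\cdot a\cdot Tq^\circ$ gives, for $\frx\in TX$ and $p\in Q$,
\[
(b\cdot Tq)(\frx,p)=b(Tq(\frx),p)=\bigvee\{a(\frx',x')\mid \frx'\sim\frx,\ q(x')=p\},
\]
where $\frx'\sim\frx$ means $Tq(\frx')=Tq(\frx)$, i.e.\ $(\frx',\frx)$ lies in the kernel of $Tq$, described explicitly via \BC. On the other side, $(q\cdot a)(\frx,p)=\bigvee\{a(\frx,x)\mid q(x)=p\}$. Thus it suffices to bound each term $a(\frx',x')$ of the first supremum by the second supremum.

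This is exactly what the preceding Lemma delivers. Fixing $\frx$ and taking any $\frx'$ with $\frx\sim\frx'$ together with any $x'$ satisfying $q(x')=p$, the Lemma produces $x\in X$ with $x\sim x'$ and $a(\frx',x')\le a(\frx,x)$. Since $x\sim x'$ forces $q(x)=q(x')=p$, the value $a(\frx,x)$ is one of the terms of $(q\cdot a)(\frx,p)$, whence $a(\frx',x')\le(q\cdot a)(\frx,p)$. Passing to the supremum over all admissible $\frx'$ and $x'$ yields $b(Tq(\frx),p)\le(q\cdot a)(\frx,p)$, establishing properness.

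Finally, I would invoke the facts recorded before the statement: a proper surjection is a quotient in $\Gph{\Tth}$ whose codomain is again a $\Tth$-category, so that $Q$ lies in $\Cat{\Tth}$; and since the full embedding $\Cat{\Tth}\hrw\Gph{\Tth}$ reflects quotients, $q$ is also the quotient of $\pi_1,\pi_2$ in $\Cat{\Tth}$. The only real content is the reverse inequality, and its sole obstacle---matching the roles of $\frx$ and $\frx'$ and guaranteeing $q(x)=p$---is precisely dissolved by the Lemma; everything else is bookkeeping with the quotient-structure formula $b=q\cdot a\cdot Tq^\circ$.
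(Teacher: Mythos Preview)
Your proof is correct and follows essentially the same approach as the paper: both unwind the quotient structure $b=q\cdot a\cdot Tq^\circ$ to express $b(Tq(\frx),p)$ as a supremum over pairs $(\frx',x')$ with $\frx'\sim\frx$ and $q(x')=p$, and then invoke the preceding Lemma to replace each term $a(\frx',x')$ by some $a(\frx,x)$ with $q(x)=p$. Your version is slightly more explicit in separating the automatic inequality $q\cdot a\le b\cdot Tq$ from the one requiring the Lemma, but the argument is the same.
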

\begin{proof}
Let $\frx\in TX$ and $y\in Q$, i.e.\ $y=q(x)$ for some $x\in X$. With $c$ denoting then structure on $Q$, we have
\[
c(Tq(\frx),y)=\bigvee\{a(\frx',x')\mid \frx'\sim\frx,\,x'\sim x\}
= \bigvee\{a(\frx,x')\mid x'\sim x\}
=\bigvee\{a(\frx,x')\mid x'\in X,\, q(x')=y\}.\qedhere
\]
\end{proof}
\begin{corollary}\label{MQuotProper}
With the same notation as above, $M(q):M(X)\to M(Q)$ is proper.
\end{corollary}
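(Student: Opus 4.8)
The plan is to establish directly the single equation that expresses properness of the $\V$-functor $M(q)\colon M(X)\to M(Q)$. Writing $c$ for the structure on $Q$, the $\V$-categories in question are $M(X)=(TX,r_X)$ and $M(Q)=(TQ,r_Q)$ with $r_X=\Txi a\cdot m_X^\circ$ and $r_Q=\Txi c\cdot m_Q^\circ$; since $M(q)$ has underlying map $Tq$, properness means
\[
r_Q\cdot Tq = Tq\cdot r_X
\]
in $\Mat{\V}$. As $M(q)$ is a $\V$-functor, the inequality $Tq\cdot r_X\le r_Q\cdot Tq$ holds automatically; the point is the reverse one, and in fact the computation below produces equality on the nose. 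The only input is the properness of $q$ from Corollary \ref{QuotProper}, which in $\Mat{\V}$ reads $c\cdot Tq = q\cdot a$.

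First I would transport this identity through $\Txi$. Since $\Txi\colon\Mat{\V}\to\Mat{\V}$ is a $2$-functor that restricts to $T$ on maps, applying it to $c\cdot Tq = q\cdot a$ and using $\Txi q=Tq$ together with $\Txi(Tq)=TTq$ gives
\[
\Txi c\cdot TTq = Tq\cdot\Txi a.
\]
Next I would rewrite $r_Q\cdot Tq=\Txi c\cdot m_Q^\circ\cdot Tq$ by commuting $m_Q^\circ$ past $Tq$. The naturality square of $m$ at $q$,
\[
\xymatrix{TTX\ar[r]^{TTq}\ar[d]_{m_X} & TTQ\ar[d]^{m_Q}\\ TX\ar[r]_{Tq} & TQ}
\]
is a weak pullback by the \BC\ assumption on $m$, and hence satisfies the Beck--Chevalley equation $m_Q^\circ\cdot Tq = TTq\cdot m_X^\circ$ in $\Mat{\V}$. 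Substituting this, and then the identity displayed above, yields
\[
r_Q\cdot Tq = \Txi c\cdot m_Q^\circ\cdot Tq = \Txi c\cdot TTq\cdot m_X^\circ = Tq\cdot\Txi a\cdot m_X^\circ = Tq\cdot r_X,
\]
which is exactly the assertion.

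The computation is short, and the two places I would check with care are the following. The first is the passage from the weak-pullback property of the naturality square to the Beck--Chevalley equation $m_Q^\circ\cdot Tq = TTq\cdot m_X^\circ$, where one must keep track of which leg carries the converse. The second is the step $\Txi c\cdot TTq = Tq\cdot\Txi a$: although $\Txi$ is in general only laxly functorial, each of the compositions $c\cdot Tq$ and $q\cdot a$ involves a map ($Tq$ on the right, $q$ on the left), and $\Txi$ preserves composition with maps strictly, so the identity holds as stated. No separate argument for the missing inequality is then required, since the displayed chain already produces equality.
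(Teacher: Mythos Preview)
Your proof is correct and follows essentially the same route as the paper: the paper's argument is exactly the two-square decomposition you give, with the top square $m_Q^\circ\cdot Tq = TTq\cdot m_X^\circ$ coming from \BC\ for $m$ and the bottom square $\Txi c\cdot TTq = Tq\cdot\Txi a$ coming from applying $\Txi$ to the properness of $q$. The only cosmetic difference is that the paper invokes strict functoriality of $\Txi$ (it is a $2$-functor on $\Mat{\V}$ here), so your caution about lax functoriality is unnecessary in this setting, though harmless.
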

\begin{proof}
Just observe that both diagrams
\[
\xymatrix{TX\ar|-{\object@{|}}[d]_{m_X^\circ}\ar[r]^{Tq} & TQ\ar|-{\object@{|}}[d]^{m_Q^\circ}\\
TTX\ar[r]_{TTq}\ar|-{\object@{|}}[d]_{\Txi a} & TTQ\ar|-{\object@{|}}[d]^{\Txi c}\\
TX\ar[r]_{Tq} & TQ}
\]
are commutative: the upper one since $m$ has \BC, the lower one since $q$ is proper and $\Txi$ is a functor.
\end{proof}

We are now in the position to show \eqref{SplitFork}. Let $\pi_1,\pi_2:R\rightrightarrows X$ in $\CocomplSep{\Tth}$ be an equivalence relation in $\SET$. Note that $R\hrw X\times X$ is left adjoint and injective, hence a split monomorphism and therefore an embedding in $\CatSep{\Tth}$. Hence, by Corollary \ref{QuotProper}, its quotient $q:X\to Q$ in $\Cat{\Tth}$ is proper, and so is $M(q):M(X)\to M(Q)$ by Corollary \ref{MQuotProper}. Let $\psi\in\hat{X}$ and $\frx\in TX$. The structure on $X$ and $Q$ we denote as $a$ and $c$ respectively, and put $r=\Txi a\cdot m_X^\circ$ and $s=\Txi c\cdot m_Q^\circ$. We have
\begin{align*}
(q^{-1}\cdot\hat{q}(\psi))(\frx)
&=\hat{q}(\psi)(Tq(\frx))\\
&=\bigvee_{\frx'\in TX}\psi(\frx')\otimes s(Tq(\frx),Tq(\frx'))\\
&=\bigvee_{(\frx'\in TX)}\bigvee_{(\frx'':\frx''\sim\frx')}\psi(\frx')\otimes r(\frx,\frx'')\\
\intertext{and}
(\widehat{\pi_2}\cdot\pi_1^{-1}(\psi))(\frx)
&=\bigvee_{(\frx'\in TX)}\bigvee_{(\frw:T\pi_2(\frw)=\frx')} \psi(T\pi_1(\frw))\otimes r(\frx,\frx')\\
&=\bigvee_{(\frx'\in TX)}\bigvee_{(\frx'':\frx''\sim\frx')}\psi(\frx'')\otimes r(\frx,\frx').
\end{align*}
We conclude $q^{-1}\cdot\hat{q}=\widehat{\pi_2}\cdot\pi_1^{-1}$.
\begin{theorem}
The forgetful functor $G:\CocomplSep{\Tth}\to\SET$ is monadic. As a consequence, $\CocomplSep{\Tth}$ is cocomplete.
\end{theorem}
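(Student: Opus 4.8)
The plan is to apply the crude monadicity theorem in the form of \cite[Corollary 2.7]{MS_Monads}: the functor $G\colon\CocomplSep{\Tth}\to\SET$ is monadic as soon as it has a left adjoint, reflects isomorphisms, and $\CocomplSep{\Tth}$ has and $G$ preserves coequalisers of $G$-equivalence relations. The first two hypotheses are already secured above---the left adjoint is the composite of the discrete functor $\SET\to\CatSep{\Tth}$ with $\widehat{(-)}\colon\CatSep{\Tth}\to\CocomplSep{\Tth}$, and reflection of isomorphisms is the corollary established above---so the whole argument reduces to the third condition.

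To verify it, I would start from an equivalence relation $\pi_1,\pi_2\colon R\rightrightarrows X$ in $\CocomplSep{\Tth}$ which is an equivalence relation in $\SET$, and form its coequaliser $q\colon X\to Q$ in $\Cat{\Tth}$. By Corollary \ref{QuotProper} the map $q$ is proper and $Q$ is again a $\Tth$-category, and by Corollary \ref{MQuotProper} the same holds for $M(q)$. The decisive ingredient is then the split fork \eqref{SplitFork} in $\CatSep{\Tth}$; being split, it is an \emph{absolute} coequaliser, hence remains a coequaliser in $\Cat{\Tth}$ and in $\CatSep{\Tth}$. Since $q\cdot\Sup_X$ coequalises $\widehat{\pi_1},\widehat{\pi_2}$, its universal property furnishes a $\Tth$-functor $\Sup_Q\colon\hat{Q}\to Q$ with $\Sup_Q\cdot\hat{q}=q\cdot\Sup_X$ and $\Sup_Q\cdot\yoneda_Q=1_Q$; thus $Q$ is L-separated and, by Theorem \ref{CharCocompl}, cocomplete, while $q$ becomes cocontinuous. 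A direct diagram chase (exactly the one carried out above) then shows that $q\colon X\to Q$ is the coequaliser of $\pi_1,\pi_2$ in $\CocomplSep{\Tth}$. Finally, since $q$ is surjective, $G(q)=q$ is the coequaliser of $\pi_1,\pi_2$ in $\SET$, and therefore $G$ preserves this coequaliser.

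With all three hypotheses in place, \cite[Corollary 2.7]{MS_Monads} yields that $G$ is monadic. For the stated consequence I would invoke the standard fact that any category monadic over $\SET$ is cocomplete (completeness of $\CocomplSep{\Tth}$ having already been obtained above), whence $\CocomplSep{\Tth}$ is cocomplete.

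I expect the genuine obstacle to lie entirely in the third condition, and specifically in the split fork \eqref{SplitFork}: establishing the identity $q^{-1}\cdot\hat{q}=\widehat{\pi_2}\cdot\pi_1^{-1}$ is the technical heart of the matter, since it is what forces $Q$ to inherit cocompleteness and forces $G$ to preserve the coequaliser. This identity rests in turn on the properness of both $q$ and $M(q)$ and on the \BC\ property of $T$, the latter being used to describe the induced relation $\frx\sim\frx'$ on $TX$.
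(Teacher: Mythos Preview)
Your proposal is correct and follows essentially the same route as the paper: you apply \cite[Corollary 2.7]{MS_Monads}, use the left adjoint and the isomorphism-reflection already established, and reduce everything to the split fork \eqref{SplitFork}, from which $\Sup_Q$, the cocompleteness of $Q$, the cocontinuity of $q$, and the coequaliser property in $\CocomplSep{\Tth}$ are extracted exactly as in the text. Your identification of the identity $q^{-1}\cdot\hat{q}=\widehat{\pi_2}\cdot\pi_1^{-1}$---and of its dependence on properness of $q$ and $M(q)$ together with \BC---as the technical core is precisely on target.
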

\begin{theorem}
The forgetful functor $\ForgetToV:\CocomplSep{\Tth}\to\CatSep{\V}$ is monadic.
\end{theorem}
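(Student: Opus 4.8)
The plan is to verify, for $\ForgetToV\colon\CocomplSep{\Tth}\to\CatSep{\V}$, exactly the three hypotheses of the monadicity criterion \cite{MS_Monads} already used for $G$: a left adjoint, reflection of isomorphisms, and the existence and preservation of coequalisers of $\ForgetToV$-equivalence relations. For the left adjoint I would simply compose the two adjunctions already available. Restricting $\ForgetToVAd\dashv\ForgetToV\colon\Cat{\Tth}\to\Cat{\V}$ along $\CatSep{\V}\hookrightarrow\Cat{\V}$ and postcomposing with $\widehat{(-)}\colon\Cat{\Tth}\to\CocomplSep{\Tth}$, which is left adjoint to the inclusion by Theorem \ref{Kan}, gives for each separated $\V$-category $X$ and each $Y\in\CocomplSep{\Tth}$ the natural bijections
\[
\CocomplSep{\Tth}(\widehat{\ForgetToVAd X},Y)\cong\Cat{\Tth}(\ForgetToVAd X,Y)\cong\Cat{\V}(X,\ForgetToV Y)=\CatSep{\V}(X,\ForgetToV Y),
\]
the last equality because $\ForgetToV Y$ is separated by Lemma \ref{OrdTFun}(2) and $\CatSep{\V}$ is full in $\Cat{\V}$. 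Hence $X\mapsto\widehat{\ForgetToVAd X}$ is left adjoint to $\ForgetToV$.

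That $\ForgetToV$ reflects isomorphisms I would argue exactly as in the proof that $G$ reflects isomorphisms: if $\ForgetToV f$ is invertible then $f$ is bijective, and a bijective left adjoint $\Tth$-functor between separated cocomplete $\Tth$-categories is simultaneously a monomorphism and an epimorphism in $\CatSep{\Tth}$, hence (by the lemma on adjoint pairs) an isomorphism there and, its inverse being again left adjoint, in $\CocomplSep{\Tth}$. For the coequaliser condition I would first observe that the forgetful functor $\CatSep{\V}\to\SET$, having the discrete $\V$-category as left adjoint, preserves limits and is faithful; consequently a $\ForgetToV$-equivalence relation $\pi_1,\pi_2\colon R\rightrightarrows X$ in $\CocomplSep{\Tth}$ is in particular an equivalence relation in $\SET$, that is, a $G$-equivalence relation. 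Therefore the construction in the proof of the monadicity of $G$ applies unchanged: the quotient $q\colon X\to Q$ exists, is proper (Corollary \ref{QuotProper}), $Q$ is separated and cocomplete, and $q$ is the coequaliser of $\pi_1,\pi_2$ in $\CocomplSep{\Tth}$. Existence is thus free, and only preservation by $\ForgetToV$ remains.

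For preservation I would transport properness across $\ForgetToV$. Writing $a,c$ for the structures on $X,Q$ and $a_\V=a\cdot e_X$, $c_\V=c\cdot e_Q$ for the underlying $\V$-structures, naturality of $e$ gives $e_Q\cdot q=Tq\cdot e_X$, so that properness of $q$ yields, for any $x_0\in X$ and $y\in Q$,
\[
c_\V(q(x_0),y)=c(Tq(e_X(x_0)),y)=\bigvee\{a_\V(x_0,x')\mid x'\in X,\ q(x')=y\}.
\]
Thus $\ForgetToV q$ is a proper surjective $\V$-functor, and its underlying function is the coequaliser of $\pi_1,\pi_2$ in $\SET$ (equivalence relations being effective there). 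Using this one-sided formula I would check the universal property directly: any $\V$-functor $g\colon\ForgetToV X\to Z$ into a separated $\V$-category with $g\cdot\ForgetToV\pi_1=g\cdot\ForgetToV\pi_2$ factors uniquely through the surjection $q$ as a function, and the displayed identity shows the induced map $\ForgetToV Q\to Z$ is again a $\V$-functor. Hence $\ForgetToV q$ is the coequaliser in $\CatSep{\V}$, so $\ForgetToV$ preserves it, and monadicity follows from \cite{MS_Monads}.

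The main obstacle I anticipate is precisely this last compatibility. A priori, coequalisers in $\Cat{\V}$ are governed by the transitive closure of the induced quotient relation, and it is not automatic that the \emph{one-step} structure carried by the underlying $\V$-category of the $\Tth$-quotient already coincides with the genuine $\V$-categorical quotient. What rescues the argument is that properness of $q$ in $\Cat{\Tth}$ descends, through the naturality of $e$, to properness of $\ForgetToV q$ in $\Cat{\V}$, which collapses the closure to a single step; establishing this descent and the resulting factorisation carefully is the crux, while the left adjoint and the reflection of isomorphisms are essentially formal once the corresponding facts for $G$ are in place.
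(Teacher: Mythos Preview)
Your proposal is correct and follows essentially the same route as the paper: reduce to the already-proved monadicity of $G$ over $\SET$ by noting that a $\ForgetToV$-equivalence relation is in particular a $\SET$-equivalence relation, and then transfer properness of the quotient $q$ from $\Cat{\Tth}$ to $\Cat{\V}$ via naturality of $e$ to conclude that $\ForgetToV q$ is the required coequaliser. The only cosmetic difference is that the paper invokes the contractible-equivalence-relation form of Beck's criterion (\cite[Theorem 2.7]{MS_Monads}) rather than the equivalence-relation form (\cite[Corollary 2.7]{MS_Monads}) you use, but since both reduce to the same properness argument this is immaterial.
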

\begin{proof}
Clearly, $\ForgetToV$ has a left adjoint and reflects isomorphisms. We show that $\ForgetToV$ preserves coequalisers of $\ForgetToV$-contractible equivalence relations (see \cite[Theorem 2.7]{MS_Monads}). Hence, let $\pi_1,\pi_2:R\rightrightarrows X$ in $\CocomplSep{\Tth}$ be a contractible equivalence relation in $\CatSep{\V}$. Then $\pi_1,\pi_2:R\rightrightarrows X$ is also an equivalence relation in $\SET$, and hence its coequaliser $q:X\to Q$ in $\SET$ underlies its coequaliser $q:X\to Q$ in $\CocomplSep{\Tth}$, moreover, $q:X\to Q$ is a proper $\Tth$-functor. Consequentely, the underlying $\V$-functor $q:X\to Q$ is proper as well, and therefore a coequaliser of $\pi_1,\pi_2:R\rightrightarrows X$ in $\CatSep{\V}$.
\end{proof}

\subsection{Densely injective $\Tth$-categories} 
Another well-known result in Topology is
\begin{theorem}
The algebras for the proper filter monad on $\TOP_0$ are precisely the T$_0$-spaces which are injective with respect to dense embeddings.
\end{theorem}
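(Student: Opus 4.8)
The plan is to repeat, for \emph{proper} presheaves, the analysis of the preceding subsections that identified $\mInj$ with the filter monad on $\TOP_0$ and its algebras with the injective ($=$ cocomplete) T$_0$-spaces. Recall that $\hat X\cong F_0X$ (the space of filters on the open-set lattice $\tau$), that $\mInj$ is the filter monad, and that the \emph{improper} filter (the whole of $\tau$) is an isolated point: taking $B=X$ in the basis $\{\frf\in F_0X\mid X\setminus B\in\frf\}$ ($B\subseteq X$ closed) exhibits $\{\frf\in F_0X\mid\varnothing\in\frf\}$ as an open singleton, whose only element is the improper filter. Let $F_0^{+}X\subseteq F_0X$ be the subspace of proper filters and set $\hat X_p:=F_0^{+}X$; the restriction of $\mInj$ to it is precisely the proper filter monad $\mInjP$. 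Since the neighbourhood filter $\mathcal N_x=\{U\in\tau\mid x\in U\}=\yoneda_X(x)$ is always proper, the Yoneda functor corestricts to the unit $\yoneda_X^p\colon X\to\hat X_p$ of $\mInjP$; and $\mInjP$, being the restriction of the Kock--Z\"oberlein monad $\mInj$, is again of Kock--Z\"oberlein type. It therefore suffices to show that the $\mInjP$-algebras are exactly the densely injective T$_0$-spaces.

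The key point is that \emph{$\yoneda_X^p\colon X\to\hat X_p$ is a dense embedding}. It is an embedding because $\yoneda_X$ is fully faithful and, $X$ being a T$_0$-space, the assignment $x\mapsto\mathcal N_x$ is injective. For density, the traces on $\hat X_p$ of the basic opens $\{\frf\mid X\setminus B\in\frf\}$ form a basis; such a trace is nonempty only if $B\neq X$ (a proper filter cannot contain $\varnothing$), and in that case it contains $\mathcal N_x$ for any point $x\notin B$. Hence the image of $\yoneda_X^p$ meets every nonempty open of $\hat X_p$. By contrast, in $\hat X$ this same image misses the open singleton $\{\frf\mid\varnothing\in\frf\}$; it is exactly the removal of this isolated point that turns the (merely fully faithful) Yoneda embedding into a dense one, and this interplay is the heart of the matter.

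Granting the lemma, both inclusions follow quickly. If $X$ is densely injective, extending $1_X\colon X\to X$ along the dense embedding $\yoneda_X^p$ produces a $\Tth$-functor $h\colon\hat X_p\to X$ with $h\cdot\yoneda_X^p\cong 1_X$, hence $h\cdot\yoneda_X^p=1_X$ since $X$ is L-separated; as for $\mInj$ (the remark after Theorem~\ref{CharCocompl}), for a Kock--Z\"oberlein monad a left inverse of the unit is automatically left adjoint to it, so $h\dashv\yoneda_X^p$ and $h$ is an $\mInjP$-algebra structure on $X$ by Theorem~\ref{Kock}. Conversely, the free algebra $\hat X_p$ is itself densely injective: given a dense embedding $j\colon A\to B$ and a continuous $f\colon A\to\hat X_p$, injectivity of $\hat X$ (cocomplete, hence injective) extends the composite $A\to\hat X_p\hrw\hat X$ to some continuous $g\colon B\to\hat X$; were $g(b)$ improper for some $b$, then $g^{-1}(\{\frf\mid\varnothing\in\frf\})$ would be a nonempty open of $B$ meeting the dense set $j(A)$, forcing $f(a)$ improper for some $a$ --- impossible. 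Thus $g(B)\subseteq\hat X_p$, and its corestriction is the required extension. Since dense injectivity is stable under retracts in $\TOP_0$, and every $\mInjP$-algebra $X$ is a retract of the free algebra $\hat X_p$ (via $\yoneda_X^p$ and its structure map), every algebra is densely injective. This proves the theorem.

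The step I expect to be most delicate is not the density lemma but the administrative one underlying the first paragraph: checking that $\mInjP$ really is a Kock--Z\"oberlein \emph{submonad} of $\mInj$, i.e.\ that the multiplication $\yoneda^{-1}$ and all the functoriality data restrict to proper presheaves and that the Kock--Z\"oberlein inequality of Theorem~\ref{Kock} is inherited, so that Theorem~\ref{Kock} applies verbatim in the proper setting; one must also confirm that the traces described above genuinely form a basis for the subspace topology on $\hat X_p$.
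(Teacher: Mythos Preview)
The paper does not prove this theorem at all: it is quoted as a ``well-known result in Topology'' and serves only as motivation for the general $\Tth$-categorical development that follows (inhabited modules, the sub-$\Tth$-category $X^+\subseteq\hat X$, inhabited-cocompleteness, and the monad $\mInjP$). The special case $\Tth=\Uth_\two$ of that development, together with the identification $\hat X\cong F_0X$ worked out earlier, recovers the theorem; in particular $X^+$ becomes the proper-filter space and the paper's notion of ``dense $\Tth$-functor'' specialises to the usual one.

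Your argument is correct, and for the implication ``densely injective $\Rightarrow$ $\mInjP$-algebra'' it is essentially the paper's: corestrict the Yoneda embedding to $X^+$, produce a left inverse by injectivity, and use the Kock--Z\"oberlein property (your observation that a left inverse of the unit of a KZ monad is automatically a left adjoint is indeed a general $2$-categorical fact, obtained from naturality of the unit together with $\widehat{\yoneda_X}\le\yoneda_{\hat X}$; the paper effects the same step via the explicit Yoneda computation (ii)$\Rightarrow$(iii) of Theorem~\ref{CharCocompl}). The genuine difference is in the converse. The paper's route is \emph{algebra $\Rightarrow$ inhabited-cocomplete $\Rightarrow$ densely injective}: the first step is the KZ characterisation, and the second is the analogue of (iv)$\Rightarrow$(i) in Theorem~\ref{CharCocompl}, resting on Lemma~\ref{PlusClosedinHat} (closure of $X^+\subseteq\hat X$ under inhabited-weighted colimits). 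Your route avoids colimits entirely: you show directly that the free algebra $\hat X_p$ is densely injective by extending into $\hat X$ and using that the improper filter is an isolated point to force the extension back into $\hat X_p$, and then invoke retract stability. This is more elementary and nicely topological; the paper's approach buys the additional equivalence with inhabited-cocompleteness and works uniformly for every theory $\Tth$, which is of course its purpose. The administrative checks you flag (that $(-)^+$, the unit, and $\yoneda^{-1}$ restrict, and that the KZ inequality is inherited) are exactly what the paper verifies in the paragraphs leading up to $\ICocomplSep{\Tth}\cong(\CatSep{\Tth})^{\mInjP}$.
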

In the language of convergence, a continuous map $f:X\to Y$ is dense whenever
\[
 \forall y\in Y\,\exists\frx\in TX\,.\,Uf(\frx)\rightarrow y,
\]
and we observe that $Uf(\frx)\rightarrow y \iff \frx\; f_*\; y$. This suggests the following
\begin{definition}
A $\Tth$-module $\varphi:X\kmodto Y$ is called \emph{inhabited} if
\[
 k\le\bigwedge_{y\in Y}\bigvee_{\frx\in TX}\varphi(\frx,y).
\]
A $\Tth$-functor $f:X\to Y$ is called \emph{dense} if $f_*$ is inhabited.
\end{definition}
We hasten to remark that $f^*$ is inhabited, for each $\Tth$-functor $f:X\to Y$. Hence
\begin{proposition}\label{AdjDense}
Each left adjoint $\Tth$-functor is dense.
\end{proposition}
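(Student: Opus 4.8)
The plan is to reduce the statement to the remark immediately preceding it, namely that $f^*$ is inhabited for \emph{every} $\Tth$-functor $f$, and then to invoke the characterisation of left adjoints stating that $f:X\to Y$ is left adjoint precisely when there is a $\Tth$-functor $g:Y\to X$ with $f_*=g^*$. Since the whole argument is short, I would first make the remark precise and then combine it with this identity.

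First I would verify the remark for an arbitrary $\Tth$-functor $h:A\to B$ between $\Tth$-categories $(A,a)$ and $(B,b)$. By definition $h^*=h^\circ\cdot b$, so that $h^*(\fry,x)=b(\fry,h(x))$ for $\fry\in TB$ and $x\in A$. For each fixed $x\in A$ I would test the supremum at $\fry=e_B(h(x))$: the reflexivity axiom $e_B^\circ\le b$ gives $k\le b(e_B(h(x)),h(x))=h^*(e_B(h(x)),x)$, whence $k\le\bigvee_{\fry\in TB}h^*(\fry,x)$. As $x\in A$ was arbitrary, this yields $k\le\bigwedge_{x\in A}\bigvee_{\fry\in TB}h^*(\fry,x)$, i.e.\ $h^*$ is inhabited.

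Now let $f:X\to Y$ be left adjoint. By the characterisation of left adjoint $\Tth$-functors there is a $\Tth$-functor $g:Y\to X$ with $f_*=g^*$. Applying the previous step to $h=g$ shows that $g^*$ is inhabited, and since $f_*=g^*$ as $\Tth$-relations $X\krelto Y$, the module $f_*$ is inhabited as well; by definition this says exactly that $f$ is dense. There is no genuine obstacle here: the only point requiring care is bookkeeping the variance, namely that $g^*$ for $g:Y\to X$ and $f_*$ for $f:X\to Y$ are $\Tth$-relations of the \emph{same} type $X\krelto Y$, so that the equality $f_*=g^*$ transfers inhabitedness from one to the other.
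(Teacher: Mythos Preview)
Your proof is correct and follows exactly the route the paper intends: the paper states the proposition immediately after remarking that $f^*$ is inhabited for every $\Tth$-functor $f$, with the word ``Hence'' as its only justification. You have simply spelled out both the verification of that remark (via $\fry=e_B(h(x))$ and reflexivity) and the passage from it to the proposition via the identity $f_*=g^*$ for a left adjoint $f$, which is precisely the implicit argument.
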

By definition, $\varphi:X\kmodto Y$ is inhabited if and only if $k\le \varphi\kleisli k$, where $k$ denotes the constant $\Tth$-relation $k:T1\times Z\to\V$ with value $k\in\V$, for a set $Z$. Consequentely, with $\varphi:X\kmodto Y$ and $\psi:Y\kmodto Z$ also $\psi\kleisli\varphi$ is inhabited. Furthermore, if $\varphi$ is inhabited and $\varphi\le\varphi'$, then $\varphi'$ is inhabited too. Note also that each surjective $\Tth$-functor is dense.
\begin{proposition}\label{CompCancDense}
Consider the (up to $\cong$) commutative triangle
\[
\xymatrix{X\ar[d]_f\ar[dr]^g_\cong \\ Y\ar[r]_h & Z}
\]
of $\Tth$-functors. Then the following assertions hold.
\begin{enumerate}
\item If $h$ and $f$ are dense, then so is $g$.
\item If $g$ is dense and $h$ is fully faithful, then $f$ is dense.
\item If $g$ is dense, then $h$ is dense.
\end{enumerate}
\end{proposition}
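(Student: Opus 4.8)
The plan is to reduce all three claims to the elementary properties of inhabited $\Tth$-modules recorded just above the statement: that a Kleisli composite $\psi\kleisli\varphi$ of inhabited modules is again inhabited, that inhabitedness propagates upward along $\le$, and that $f^*$ is inhabited for every $\Tth$-functor $f$. The only bridge needed is the translation of the (up to $\cong$) commutative triangle into module language. Since $g\cong h\cdot f$ means $g_*=(h\cdot f)_*=h_*\kleisli f_*$, the whole proposition becomes a question of when inhabitedness of the composite $h_*\kleisli f_*$ controls, or is controlled by, inhabitedness of its factors.

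For (1) I would simply note that $f$ and $h$ dense mean $f_*$ and $h_*$ are inhabited, so their composite $h_*\kleisli f_*=g_*$ is inhabited and $g$ is dense. For (3) I would use the module identity $\psi\kleisli f_*=\psi\cdot Tf$ to rewrite $g_*=h_*\kleisli f_*=h_*\cdot Tf$, so that $g_*(\frx,z)=h_*(Tf(\frx),z)$. Then for each $z$ the supremum defining inhabitedness of $g_*$ ranges only over the elements $Tf(\frx)\in TY$, giving $\bigvee_{\frx\in TX}g_*(\frx,z)\le\bigvee_{\fry\in TY}h_*(\fry,z)$; taking $\bigwedge_z$ shows that the inhabitedness quantity of $h_*$ dominates that of $g_*$, which is $\ge k$, so $h_*$ is inhabited and $h$ is dense.

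The genuine content, and the step I expect to be the main obstacle, is (2): here I cannot just compare suprema, since density of $g$ sits on the composite while I want density of the \emph{first} factor $f$, and inhabitedness only propagates upward along $\le$. The idea is to factor $f_*$ \emph{exactly} through $g_*$ using full faithfulness of $h$. Because $f_*$ is a $\Tth$-module we have $b\kleisli f_*=f_*$, and full faithfulness gives $b=h^*\kleisli h_*$; associativity of Kleisli convolution then yields
\[
f_*=(h^*\kleisli h_*)\kleisli f_*=h^*\kleisli(h_*\kleisli f_*)=h^*\kleisli g_*.
\]
Now $g_*$ is inhabited by hypothesis and $h^*$ is inhabited (the upper-star module of any $\Tth$-functor is inhabited), so the composite $h^*\kleisli g_*=f_*$ is inhabited and $f$ is dense. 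I would stress that full faithfulness is used essentially: without it one obtains only $b\le h^*\kleisli h_*$ and hence $f_*\le h^*\kleisli g_*$, which is useless since inhabitedness does not descend along $\le$.
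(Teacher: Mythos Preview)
Your proof is correct and, for parts (1) and (2), it is exactly the paper's argument (you have simply spelled out the step $f_*=b\kleisli f_*=(h^*\kleisli h_*)\kleisli f_*=h^*\kleisli g_*$ that the paper compresses into ``from $h_*\kleisli f_*=g_*$ follows $f_*=h^*\kleisli g_*$''). Your closing remark on the essential role of full faithfulness is also accurate.

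For (3) you take a slightly different, more elementwise route than the paper intends. The paper's ``in a similar way'' refers to the same module-level trick used in (2): since $f_*\kleisli f^*\le 1_Y^*$ one has
\[
g_*\kleisli f^* \;=\; h_*\kleisli f_*\kleisli f^* \;\le\; h_*,
\]
and the left-hand side is inhabited as a composite of the inhabited modules $g_*$ and $f^*$; upward closure of inhabitedness then gives $h_*$ inhabited. Your argument instead unpacks $g_*=h_*\cdot Tf$ and compares the defining suprema directly. Both are valid; the paper's version keeps everything at the level of module calculus (composites plus $\le$), while yours trades that uniformity for a shorter, completely elementary computation that avoids invoking closure under composition a second time.
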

\begin{proof}
(1) is obvious since inhabited $\Tth$-modules compose. To see (2), note that from $h_*\kleisli f_*=g_*$ follows $f_*=h^*\kleisli g_*$, hence $f_*$ is inhabited and therefore $f$ is dense. (3) can be shown in a similar way.
\end{proof}
By the Yoneda Lemma (Corollary \ref{YonedaLem}), for each $\psi\in\hat{X}$ we have
\[
 \bigvee_{\frx\in TX}(\yoneda_X)_*(\frx,\psi)=\bigvee_{\frx\in TX}\psi(\frx).
\]
Hence, with
\[
 X^+=\{\psi\in\hat{X}\mid \psi\text{ is inhabited}\}
\]
and the structure being inherited from $\hat{X}$, the restriction $\yoneda_X:X\to X^+$ of the Yoneda embedding is dense. Furthermore, for a $\Tth$-module $\varphi:X\kmodto Y$ we have
\[
\varphi \text{ is inhabited}\iff \mate{\varphi}:Y\to\hat{X}\text{ factors through }X^+\hrw\hat{X}.
\]

We call a $\Tth$-category $X$ \emph{densely injective} if, for all $\Tth$-functors $f:A\to X$ and fully faithful and dense $\Tth$-functors $i:A\to B$, there exists a $\Tth$-functor $g:B\to X$ such that $g\cdot i\cong f$. A $\Tth$-category $X$ is called \emph{inhabited-cocomplete} if $X$ has all $\varphi$-weighted colimits where $\varphi$ is inhabited. Note that, when passing from
\begin{align*}
\xymatrix{A\ar[r]^f\ar@{-^{>}}|-{\object@{o}}[d]_\varphi & X\\ B} &&\text{to}&&
\xymatrix{X\ar[r]^{1_X}\ar@{-^{>}}|-{\object@{o}}[d]_{\varphi\kleisli f^*} & X,\\ B}
\end{align*}
with $\varphi$ also $\varphi\kleisli f^*$ is inhabited, so that it is enough to consider $f=1_X$ in the definition of inhabited-cocomplete. A $\Tth$-functor $f:X\to Y$ is \emph{inhabited-cocontinuous} if $f$ preserves all $\varphi$-weighted colimits where $\varphi$ is inhabited. Let $\ICocompl{\Tth}$ denote the category of inhabited-cocomplete $\Tth$-categories and inhabited-cocontinuous $\Tth$-functors between them, and $\ICocomplSep{\Tth}$ denotes its full subcategory of L-separated $\Tth$-categories. 
\begin{lemma}\label{PlusClosedinHat}
For each $\Tth$-category $X$, $X^+$ is closed under inhabited colimits in $\hat{X}$. In particular, $X^+$ is inhabited-cocomplete.
\end{lemma}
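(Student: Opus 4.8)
The plan is to identify inhabited colimits in $\hat{X}$ with Kleisli convolution of $\Tth$-modules, and then to reduce the whole statement to the already-recorded fact that inhabited $\Tth$-modules are closed under composition. Concretely, I would take a $\Tth$-functor $h\colon W\to\hat{X}$ whose image lies in $X^+$ together with an inhabited weight $\Psi\colon W\kmodto Z$, and show that the colimit $g:=\colim(\Psi,h)$ formed in the cocomplete $\Tth$-category $\hat{X}$ (Theorem \ref{HatVCocompl}) again factors through $X^+\hrw\hat{X}$. Since $\hat{X}$ is L-separated, this colimit is determined on the nose, so I may legitimately evaluate the presheaf $g(z)$ for each $z\in Z$.

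The key step is to pin $g$ down as a presheaf. I would write $h=\mate{\eta}$ for the unique $\Tth$-module $\eta\colon X\kmodto W$ it represents; because the image of $h$ lies in $X^+$, the characterisation $\varphi\text{ inhabited}\iff\mate{\varphi}\text{ factors through }X^+$ shows that $\eta$ is inhabited. Now, using the reduction $\colim(\Psi,h)\cong\colim(\Psi\kleisli h^*,1_{\hat{X}})$, the formula $\colim(\theta,1_{\hat{X}})\cong\Sup_{\hat{X}}\cdot\mate{\theta}$, and $\Sup_{\hat{X}}=\yoneda_X^{-1}$ from Theorem \ref{HatVCocompl}, I would compute for $z\in Z$ and $\frx\in TX$
\[
g(z)(\frx)=\yoneda_X^{-1}(\mate{\Psi\kleisli h^*}(z))(\frx)=(\Psi\kleisli h^*)(T\yoneda_X(\frx),z)=(\Psi\kleisli\eta)(\frx,z),
\]
where the last equality uses the Kleisli identities $\varphi\kleisli f_*=\varphi\cdot Tf$ and $f^*\kleisli\psi=f^\circ\cdot\psi$ together with the Yoneda Lemma (Corollary \ref{YonedaLem}) to recognise $\eta=h^*\kleisli(\yoneda_X)_*$. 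In other words $g=\mate{\Psi\kleisli\eta}$.

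With this identification the conclusion is immediate. As $\Psi$ and $\eta$ are both inhabited, so is $\Psi\kleisli\eta$, whence $\bigvee_{\frx\in TX}g(z)(\frx)=\bigvee_{\frx\in TX}(\Psi\kleisli\eta)(\frx,z)\ge k$ for every $z\in Z$; thus each $g(z)$ is an inhabited presheaf, i.e.\ $g$ factors through $X^+$. This proves closure of $X^+$ under inhabited colimits in $\hat{X}$. For the final assertion, $\hat{X}$ is cocomplete and hence admits all inhabited-weighted colimits; by what we just proved these lie in $X^+$, and since $X^+\hrw\hat{X}$ is fully faithful they serve as the corresponding colimits in $X^+$. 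Therefore $X^+$ is inhabited-cocomplete.

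I expect the main obstacle to be the middle paragraph, namely the bookkeeping needed to identify the colimit $g$ with $\mate{\Psi\kleisli\eta}$: assembling the reduction to $\Sup_{\hat{X}}=\yoneda_X^{-1}$ and the two Kleisli identities in the right order is where all the care is required. Once that identification is in place, the closure property is nothing more than the composability of inhabited $\Tth$-modules, and the ``in particular'' is a formal consequence of full faithfulness of $X^+\hrw\hat{X}$.
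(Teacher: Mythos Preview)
Your argument is correct and follows essentially the same route as the paper: compute the colimit in $\hat{X}$ as $\yoneda_X^{-1}\cdot\mate{\text{weight}\kleisli h^*}$, evaluate at $T\yoneda_X(\frx)$, and reduce to the composability of inhabited $\Tth$-modules. The one difference is that the paper restricts to $h=\iota\colon X^+\hrw\hat{X}$ and only records the inequality $g(y)(\frx)\ge(\varphi\kleisli(\yoneda_X)_*)(\frx,y)$, whereas your associativity manoeuvre $(\Psi\kleisli h^*)\kleisli(\yoneda_X)_*=\Psi\kleisli\eta$ yields the sharper exact identification $g=\mate{\Psi\kleisli\eta}$; this is a cleaner bookkeeping but not a genuinely different idea.
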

\begin{proof}
We consider the diagram 
\[
\xymatrix{X^+\ar[r]^\iota\ar@{-^{>}}|-{\object@{o}}[d]_\varphi & \hat{X},\\ Y}
\]
with $\iota:X^+\hrw\hat{X}$ being the inclusion functor and $\varphi$ inhabited. Its colimit in $\hat{X}$ is given by
\[
\yoneda_X^{-1}\cdot\mate{\varphi\kleisli{\iota^*}}:Y\to\hat{X}.
\]
Hence, for any $y\in Y$ and $\frx\in TX$,
\begin{align*}
\yoneda_X^{-1}\cdot\mate{\varphi\kleisli{\iota^*}}(y)(\frx)
=\varphi\kleisli{\iota^*}(T\yoneda_X(\frx),y)
\ge\varphi\cdot T\iota^\circ(T\yoneda_X(\frx),y)
=\varphi(T\yoneda_X(\frx),y)=\varphi\kleisli(\yoneda_X)_*(\frx,y),
\end{align*}
where in the last two expessions we consider $\yoneda_X:X\to X^+$. Since $\varphi\kleisli(\yoneda_X)_*$ is inhabited, the $\Tth$-functor $\yoneda_X^{-1}\cdot\mate{\varphi\kleisli{\iota^*}}:Y\to\hat{X}$ takes values in $X^+$ and the assertion follows.
\end{proof}
From the observations made so far it is now clear that we have the same series of results for densely injective and inhabited-cocomplete $\Tth$-categories as we proved for injective and cocomplete $\Tth$-categories.
\begin{theorem}
Let $X$ be $\Tth$-category. 
\begin{enumerate}
\item Each $\psi\in X^+$ is an inhabited colimit of representables.
\item The following assertions are equivalent.
\begin{enumerate}
\renewcommand{\theenumii}{\roman{enumii}}
\item $X$ is densely injective.
\item $\yoneda_X:X\to X^+$ has a left inverse $\Sup_X^+:X^+\to X$.
\item $\yoneda_X:X\to X^+$ has a left adjoint $\Sup_X^+:X^+\to X$.
\item $X$ is inhabited-cocomplete.
\end{enumerate}
\item Composition with $\yoneda_X:X\to X^+$ defines an equivalence
\[
 \ICocompl{\Tth}(X^+,Y)\to\Cat{\Tth}(X,Y)
\]
of ordered sets, for each inhabited-cocomplete $\Tth$-category $Y$.
\end{enumerate}
\end{theorem}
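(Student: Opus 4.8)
The plan is to obtain every assertion by transcribing, almost word for word, the proofs of Proposition~\ref{SomeColims}, Theorem~\ref{CharCocompl} and Theorem~\ref{Kan}, systematically replacing $\hat{X}$ by $X^+$, ``cocomplete'' by ``inhabited-cocomplete'' and ``injective'' by ``densely injective'', while keeping track that every weight entering a colimit is inhabited. Three facts already established above make this possible: the corestricted Yoneda functor $\yoneda_X:X\to X^+$ is fully faithful (being the corestriction of the fully faithful $\yoneda_X:X\to\hat{X}$ to the full sub-$\Tth$-category $X^+$) and dense; by Lemma~\ref{PlusClosedinHat}, $X^+$ is closed under inhabited colimits in $\hat{X}$, so $X^+$ is itself inhabited-cocomplete and plays the role of the free such object; and $\mate{\varphi}$ factors through $X^+\hrw\hat{X}$ exactly when $\varphi$ is inhabited. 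Throughout I use that inhabited $\Tth$-modules are closed under Kleisli convolution, and that $f^*$ is inhabited for every $\Tth$-functor $f$. For (1) I simply repeat Proposition~\ref{SomeColims}(1): for $\psi\in X^+$ the identity $\yoneda_*\homkleisliright\psi={\mate{\psi}}_*$ is read off from Theorem~\ref{Yoneda}, and since $\psi$ is inhabited, $\mate{\psi}:G\to X^+$, exhibiting $\psi$ as an inhabited colimit of representables.

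For (2) I follow the four implications of Theorem~\ref{CharCocompl}. (i)$\Rw$(ii): apply densely injectivity to $1_X:X\to X$ along the fully faithful \emph{and} dense $\yoneda_X:X\to X^+$, obtaining a left inverse $\Sup_X^+$. (ii)$\Rw$(iii): the identical computation using Corollary~\ref{YonedaLem} and Lemma~\ref{OrdTFun} upgrades the left inverse to a left adjoint by verifying $1_{X^+}\le\yoneda_X\cdot\Sup_X^+$. (iii)$\Rw$(iv): for an inhabited $\psi:X\kmodto Y$ the mate $\mate{\psi}$ lands in $X^+$, and Theorem~\ref{Yoneda} gives $1_X^*\homkleisliright\psi=(\Sup_X^+\cdot\mate{\psi})_*$, so $\Sup_X^+\cdot\mate{\psi}\cong\colim(\psi,1_X)$. (iv)$\Rw$(i): given a fully faithful \emph{and dense} $i:A\to B$ and a $\Tth$-functor $f:A\to X$, the reduction $\colim(i_*,f)\cong\colim(i_*\kleisli f^*,1_X)$ involves the weight $i_*\kleisli f^*$, which is inhabited because $i_*$ is inhabited by density of $i$ and $f^*$ is always inhabited; hence inhabited-cocompleteness produces $g\cong\colim(i_*,f)$, and the identity $i^*\kleisli i_*=1_A^*$ coming from full faithfulness then yields $g\cdot i\cong f$ exactly as before.

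For (3) I mimic Theorem~\ref{Kan}. Given $f:X\to Y$ with $Y$ inhabited-cocomplete, the weight $(\yoneda_X)_*\kleisli f^*$ is inhabited (again $(\yoneda_X)_*$ is inhabited by density of $\yoneda_X:X\to X^+$), so $f_L^+:=\colim((\yoneda_X)_*,f):X^+\to Y$ exists in $Y$; it is inhabited-cocontinuous and satisfies $f_L^+\cdot\yoneda_X\cong f$, with monotonicity in $f$ obtained as in Theorem~\ref{Kan}. Essential uniqueness is forced by part~(1): every object of $X^+$ is an inhabited colimit of representables, and any inhabited-cocontinuous $g:X^+\to Y$ preserves these, hence is determined up to equivalence by $g\cdot\yoneda_X$. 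This yields the claimed equivalence $\ICocompl{\Tth}(X^+,Y)\to\Cat{\Tth}(X,Y)$ of ordered sets.

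The one genuine point of departure from the cocomplete case is the right-adjoint description. In Theorem~\ref{Kan} the extension $f_L$ comes with right adjoint $\mate{f_*}:Y\to\hat{X}$, but here $\mate{f_*}$ factors through $X^+$ only when $f_*$ is inhabited, i.e.\ when $f$ is itself dense; for a general $f$ the functor $f_L^+$ therefore need not be a left adjoint, and the correct statement is merely that it is inhabited-cocontinuous. This is precisely why the target in (3) is $\ICocompl{\Tth}$ rather than a category of left adjoint functors. The only real obstacle in the whole argument is the bookkeeping just described: one must check at each stage that the weight produced is inhabited, so that the relevant colimit exists and, via Lemma~\ref{PlusClosedinHat}, remains inside $X^+$. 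Once this is verified, every computation is identical to the one already carried out for $\hat{X}$.
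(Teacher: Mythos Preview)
Your plan coincides with what the paper does: it offers no separate proof but simply declares that ``the same series of results'' holds once Lemma~\ref{PlusClosedinHat} and the density of $\yoneda_X:X\to X^+$ are in hand, and your transcription of Proposition~\ref{SomeColims}(1) and Theorem~\ref{CharCocompl} for parts~(1) and~(2) is entirely correct.

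There is, however, a genuine gap in your treatment of part~(3). You rightly observe that the proof of Theorem~\ref{Kan} establishes left adjointness of $f_L$ via $f_L\dashv\mate{f_*}$, and that this fails here because $\mate{f_*}$ need not factor through $X^+$ when $f$ is not dense. But you then simply \emph{assert} that $f_L^+:=\colim((\yoneda_X)_*,f)$ is inhabited-cocontinuous; this is exactly the property needed to land in $\ICocompl{\Tth}(X^+,Y)$, and nothing in the transcription of Theorem~\ref{Kan} supplies it once the adjunction is gone. The repair is short but not automatic: observe that $f_L^+\cong\Sup_Y^+\cdot f^+$, where $f^+:X^+\to Y^+$ is the restriction of the left adjoint $\hat{f}:\hat{X}\to\hat{Y}$ (this restriction exists because $\hat{f}(\psi)=\psi\kleisli f^*$ is inhabited whenever $\psi$ is). Since $X^+$ and $Y^+$ are closed under inhabited colimits in $\hat{X}$ and $\hat{Y}$ by Lemma~\ref{PlusClosedinHat}, inhabited colimits in $X^+$ are computed as in $\hat{X}$ and are preserved by $\hat{f}$, hence by $f^+$; composing with the left adjoint $\Sup_Y^+$ then gives inhabited-cocontinuity of $f_L^+$. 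With this one addition your argument is complete.
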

We have just seen that the inclusion functor $\ICocomplSep{\Tth}\hrw\CatSep{\Tth}$ has a left adjoint $(-)^+:\CatSep{\Tth}\to\ICocomplSep{\Tth}$. In fact, since for each $\Tth$-functor $f:X\to Y$ and each $\psi\in X^+$ we have $\hat{f}(\psi)\in Y^+$, the $\Tth$-functor $f^+:X^+\to Y^+$ is just the restriction of $\hat{f}$ to $X^+$ and $Y^+$. With a similar proof as for Proposition \ref{CharLeftAdFun} one shows
\begin{proposition}
Let $f:X\to Y$ be a $\Tth$-functor between inhabited-cocomplete $\Tth$-categories. Then the following assertions are equivalent.
\begin{eqcond}
\item $f$ is inhabited-cocontinuous.
\item We have $f\cdot \Sup^+_X\cong \Sup^+_Y\cdot\hat{f}$.
\[
 \xymatrix{X^+\ar[r]^{f^+}\ar[d]_{\Sup^+_X}\ar@{}[dr]|{\cong}
	& Y^+\ar[d]^{\Sup^+_Y}\\ X\ar[r]_f & Y}
\]
\end{eqcond}
\end{proposition}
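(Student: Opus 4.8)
The plan is to imitate the proof of Proposition~\ref{CharLeftAdFun}, replacing $\hat{X}$, $\Sup_X$, $\hat{f}$ and ordinary colimits by $X^+$, $\Sup^+_X$, $f^+$ and \emph{inhabited} colimits throughout. Two facts carry the argument. First, exactly as in the cocomplete case, one has $\Sup^+_X\cong\colim((\yoneda_X)_*,1_X)$, and the $\psi$-weighted colimit of $1_X$ for an inhabited weight $\psi$ is computed as $\Sup^+_Y\cdot\mate{\psi}$ (the inhabited analogue of the Remarks following Theorem~\ref{CharCocompl}); here $(\yoneda_X)_*$ is itself inhabited because $\yoneda_X\colon X\to X^+$ is dense. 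Second, for any inhabited module $\psi\colon X\kmodto Z$ one has the reindexing identity $f^+\cdot\mate{\psi}=\mate{\psi\kleisli f^*}$ as $\Tth$-functors $Z\to Y^+$: indeed $\mate{\psi}$ factors through $X^+$, the underlying map of $\hat{f}$ is $(-)\kleisli f^*$, and $\psi\kleisli f^*$ is again inhabited, so $\mate{\psi\kleisli f^*}$ factors through $Y^+$ and agrees with $f^+\cdot\mate{\psi}$ by Corollary~\ref{YonedaLem}.

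For (i)$\Rw$(ii) I would use that an inhabited-cocontinuous $f$ preserves the inhabited colimit defining $\Sup^+_X$. Thus $f\cdot\Sup^+_X\cong\colim((\yoneda_X)_*,f)$, which by the reduction of weighted colimits of $f$ to colimits of an identity equals $\colim((\yoneda_X)_*\kleisli f^*,1_Y)=\Sup^+_Y\cdot\mate{(\yoneda_X)_*\kleisli f^*}$. Applying the reindexing identity with $\psi=(\yoneda_X)_*$, and noting $\mate{(\yoneda_X)_*}=1_{X^+}$ by Corollary~\ref{YonedaLem}, rewrites this as $\Sup^+_Y\cdot f^+$, whence $f\cdot\Sup^+_X\cong\Sup^+_Y\cdot f^+$.

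For (ii)$\Rw$(i) the point is that, unlike in Proposition~\ref{CharLeftAdFun}, there is no left-adjoint intermediate condition available, so preservation must be checked directly. Let $\psi\colon X\kmodto Z$ be an inhabited weight; since (as in the cocomplete case) it is enough to check preservation of $\psi$-weighted colimits of $1_X$, we compute
\[
f\cdot\colim(\psi,1_X)=f\cdot\Sup^+_X\cdot\mate{\psi}\cong\Sup^+_Y\cdot f^+\cdot\mate{\psi}=\Sup^+_Y\cdot\mate{\psi\kleisli f^*}=\colim(\psi\kleisli f^*,1_Y)\cong\colim(\psi,f),
\]
using (ii), then the reindexing identity, and finally the reduction lemma once more. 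Hence $f$ preserves every inhabited-weighted colimit of $1_X$, i.e.\ $f$ is inhabited-cocontinuous.

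The routine verifications I would defer are the reindexing identity $f^+\cdot\mate{\psi}=\mate{\psi\kleisli f^*}$ (a pointwise computation showing Kleisli convolution acts only on the domain variable) and the closure of $X^+$ under inhabited colimits in $\hat{X}$, already recorded in Lemma~\ref{PlusClosedinHat}. These, together with the absence of an adjointness shortcut in the backward direction, are the only places where the inhabited setting requires more than a verbatim transcription of the proof of Proposition~\ref{CharLeftAdFun}.
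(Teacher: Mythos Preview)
Your proposal is correct and, since the paper gives no explicit proof here (it only says ``with a similar proof as for Proposition~\ref{CharLeftAdFun}''), your write-up is a legitimate unfolding of what the paper intends. Two remarks on the comparison are worth making.

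For (i)$\Rightarrow$(ii) the paper's analogous step in Proposition~\ref{CharLeftAdFun} works purely at the module level via Lemma~\ref{AdjModLem}: from $(f\cdot\Sup_X)_*=f_*\homkleisliright(\yoneda_X)_*$ one rewrites $f_*=\yoneda_Y^*\kleisli(\yoneda_Y\cdot f)_*$ and pulls $\yoneda_Y^*$ through the extension because $\yoneda_Y^*$ is right adjoint. That argument transfers verbatim to the inhabited setting (with $\yoneda_X:X\to X^+$, $\yoneda_Y:Y\to Y^+$ and $f^+$ replacing $\hat f$), so Lemma~\ref{AdjModLem} would have sufficed. Your route via the colimit formula $\Sup^+_Y\cdot\mate{\psi}$ and the reindexing identity $f^+\cdot\mate{\psi}=\mate{\psi\kleisli f^*}$ is an equally valid, more ``elementwise'' packaging of the same computation.

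For (ii)$\Rightarrow$(i) you are right that the cycle through left adjointness in Proposition~\ref{CharLeftAdFun} is not available: the construction of a right adjoint $\Sup_X\cdot f^{-1}\cdot\yoneda_Y$ uses $f^{-1}:\hat{Y}\to\hat{X}$, and $f^{-1}$ restricts to $Y^+\to X^+$ only when $f$ is dense. Your direct verification via $f\cdot\Sup^+_X\cdot\mate{\psi}\cong\Sup^+_Y\cdot f^+\cdot\mate{\psi}$ is exactly the adaptation required, and this is presumably what the paper has in mind by ``similar proof''.
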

The induced monad on $\CatSep{\Tth}$ we denote as $\mInjP=((-)^+,\yoneda,\mu)$. With the same arguments used in \ref{CocomplAlg} one verifies that $\mInjP$ is of Kock-Z\"oberlein type. We conclude
\begin{theorem}
$\ICocomplSep{\Tth}\cong(\CatSep{\Tth})^{\mInjP}$.
\end{theorem}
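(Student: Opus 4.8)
The plan is to mirror, step for step, the argument that established $(\CatSep{\Tth})^\mInj\cong\CocomplSep{\Tth}$, replacing the presheaf monad $\mInj$ by its inhabited variant $\mInjP=((-)^+,\yoneda,\mu)$. The one structural fact I would lean on throughout is that $\mInjP$ is of Kock-Z\"oberlein type, which has just been verified by the arguments of Subsection \ref{CocomplAlg}. This lets me invoke Theorem \ref{Kock}, and in particular its clause (iv): a $\Tth$-functor $h:X^+\to X$ is the structure map of an $\mInjP$-algebra if and only if $h\dashv\yoneda_X$ and $h\cdot\yoneda_X=1_X$. Because we work in the L-separated setting, the condition $h\cdot\yoneda_X\cong 1_X$ already forces $h\cdot\yoneda_X=1_X$, and left adjoints are unique up to equivalence, hence genuinely unique; so an $\mInjP$-algebra structure on $X$, if it exists, is uniquely determined by $X$.

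First I would settle the objects. By Theorem \ref{Kock}(iv) an $\mInjP$-algebra amounts to a $\Tth$-functor $h:X^+\to X$ left adjoint to $\yoneda_X:X\to X^+$ with $h\cdot\yoneda_X=1_X$. The characterisation theorem for densely injective $\Tth$-categories above shows precisely that $\yoneda_X:X\to X^+$ admits such a left adjoint $\Sup_X^+$ if and only if $X$ is inhabited-cocomplete, in which case necessarily $h=\Sup_X^+$. Hence the objects of $(\CatSep{\Tth})^{\mInjP}$ are exactly the L-separated inhabited-cocomplete $\Tth$-categories, i.e.\ the objects of $\ICocomplSep{\Tth}$, and the underlying-object assignment is a bijection.

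It remains to match the morphisms. A $\Tth$-functor $f:X\to Y$ between two $\mInjP$-algebras $(X,\Sup_X^+)$ and $(Y,\Sup_Y^+)$ is an algebra homomorphism exactly when $f\cdot\Sup_X^+=\Sup_Y^+\cdot f^+$, where $f^+:X^+\to Y^+$ is the value of the functor $(-)^+$ on $f$, namely the restriction of $\hat f$ to $X^+$ and $Y^+$. By the preceding proposition this square commutes (up to $\cong$, hence strictly by L-separation) if and only if $f$ is inhabited-cocontinuous, which is exactly the condition defining the morphisms of $\ICocomplSep{\Tth}$. Since both the Eilenberg--Moore forgetful functor $(\CatSep{\Tth})^{\mInjP}\to\CatSep{\Tth}$ and the inclusion $\ICocomplSep{\Tth}\hrw\CatSep{\Tth}$ act as the identity on underlying data, this bijection on objects and hom-sets is compatible with them and therefore assembles into an isomorphism of categories $\ICocomplSep{\Tth}\cong(\CatSep{\Tth})^{\mInjP}$. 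The only place demanding genuine attention---and it is minor---is the passage between the $\cong$-statements supplied by the 2-categorical results and the strict equalities required by the Eilenberg--Moore description; this is exactly what L-separation is there to handle.
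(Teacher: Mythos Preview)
Your proof is correct and follows exactly the approach the paper intends: the paper simply writes ``We conclude'' after noting that $\mInjP$ is of Kock--Z\"oberlein type, meaning one combines Theorem \ref{Kock}(iv) with the inhabited analogues of Theorem \ref{CharCocompl} and Proposition \ref{CharLeftAdFun} just as you did. You have merely spelled out what the paper leaves implicit, including the routine passage from $\cong$ to $=$ via L-separation.
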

Finally, we consider a $\Tth$-functor $f:X\to Y$. Then $\hat{f}:\hat{X}\to\hat{Y}$ has a right adjoint $f^{-1}:\hat{Y}\to\hat{X}$ given by $f^{-1}(\psi)=\psi\kleisli f_*$. Clearly, if $f$ is dense, then $f^{-1}$ can be restricted to $f^{-1}:Y^+\to X^+$ and we have $f^+\dashv f^{-1}$. In particular, $\yoneda^+_X:X^+\to X^{++}$ is left adjoint to $\yoneda_X^{-1}:X^{++}\to X^+$, which tells us that the multiplication $\mu_X$ of $\mInjP$ is also given by $\yoneda_X^{-1}$.
\begin{proposition}
The following are equivalent for a $\Tth$-functor $f:X\to Y$.
\begin{eqcond}
\item $f$ is dense.
\item $f^+$ is left adjoint.
\item $f^+$ is dense.\newcounter{counter}\setcounter{counter}{\value{enumi}}
\end{eqcond}
If $f$ is a inhabited-cocontinuous $\Tth$-functor between inhabited cocomplete $\Tth$-categories, then any of the conditions above is equivalent to
\begin{eqcond}
\setcounter{enumi}{\value{counter}}
\item $f$ is left adjoint.
\end{eqcond}
\end{proposition}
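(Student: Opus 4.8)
The plan is to establish the cycle (i)$\Rw$(ii)$\Rw$(iii)$\Rw$(i) and then, under the additional hypotheses, to tie in (iv). Two of these links are immediate from what precedes the statement: the discussion just before the proposition shows that density of $f$ yields $f^+\dashv f^{-1}$, so (i)$\Rw$(ii) holds at once, while Proposition \ref{AdjDense} asserts that every left adjoint $\Tth$-functor is dense, giving (ii)$\Rw$(iii). For the same reason (iv)$\Rw$(i) is nothing but Proposition \ref{AdjDense} applied to $f$ itself. It therefore remains to treat (iii)$\Rw$(i) and, in the inhabited-cocomplete setting, (i)$\Rw$(iv).

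For (iii)$\Rw$(i) I would use the naturality square $\yoneda_Y\cdot f=f^+\cdot\yoneda_X$ obtained by corestricting $\yoneda_Y\cdot f=\hat f\cdot\yoneda_X$ along the full inclusions $X^+\hookrightarrow\hat X$ and $Y^+\hookrightarrow\hat Y$, recalling that $f^+$ is precisely the restriction of $\hat f$. The corestricted Yoneda functor $\yoneda_X\colon X\to X^+$ is dense, and $f^+$ is dense by hypothesis, so Proposition \ref{CompCancDense}(1) makes the composite $\yoneda_Y\cdot f=f^+\cdot\yoneda_X$ dense. Since $Y^+$ carries the structure inherited from $\hat Y$, the corestriction $\yoneda_Y\colon Y\to Y^+$ of the fully faithful Yoneda embedding (Corollary \ref{YonedaLem}) is again fully faithful; hence Proposition \ref{CompCancDense}(2), applied to the factorisation of $\yoneda_Y\cdot f$ through $h=\yoneda_Y$, forces $f$ to be dense. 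The only care needed here is the verification that corestriction preserves density of $\yoneda_X$ and full faithfulness of $\yoneda_Y$, both of which hold because $X^+$ and $Y^+$ inherit their structure from the presheaf categories.

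For (i)$\Rw$(iv) I would imitate the proof of Proposition \ref{CharLeftAdFun}, (iii)$\Rw$(i). As $f$ is dense, $f^{-1}$ restricts to a $\Tth$-functor $f^{-1}\colon Y^+\to X^+$ with $f^+\dashv f^{-1}$, and I propose the candidate right adjoint
\[
g=\Sup^+_X\cdot f^{-1}\cdot\yoneda_Y\colon Y\to X,
\]
well defined since $X$ is inhabited-cocomplete. It then suffices to verify $f_*=g^*$. Writing $g^*=\yoneda_Y^*\kleisli (f^{-1})^*\kleisli(\Sup^+_X)^*$ and substituting $\yoneda_Y^*=(\Sup^+_Y)_*$ and $(f^{-1})^*=(f^+)_*$ (from $\Sup^+_Y\dashv\yoneda_Y$ and $f^+\dashv f^{-1}$), then invoking the identity $f\cdot\Sup^+_X\cong\Sup^+_Y\cdot f^+$ available for inhabited-cocontinuous $f$ (the preceding proposition characterising inhabited-cocontinuous functors), and finally using $(\Sup^+_X)_*=\yoneda_X^*$ together with $\yoneda_X^*\kleisli(\Sup^+_X)^*=(\Sup^+_X\cdot\yoneda_X)^*=1_X^*$, the whole chain collapses to $f_*$. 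Hence $f\dashv g$ and $f$ is left adjoint.

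I expect the main obstacle to lie in this last computation, which is where the extra hypotheses are actually consumed: density of $f$ is what lets $f^{-1}$ restrict and supplies $f^+\dashv f^{-1}$, inhabited-cocompleteness of $X$ and $Y$ is what provides $\Sup^+_X$, $\Sup^+_Y$ with $\Sup^+_X\cdot\yoneda_X\cong 1_X$, and inhabited-cocontinuity of $f$ is exactly what yields the commutation $f\cdot\Sup^+_X\cong\Sup^+_Y\cdot f^+$ needed in the middle step. The remaining implications, by contrast, are formal consequences of the density calculus of Proposition \ref{CompCancDense} and of Proposition \ref{AdjDense}.
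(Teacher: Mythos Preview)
Your proposal is correct and follows the paper's proof essentially line by line: the paper also derives (i)$\Rw$(ii) from the preceding discussion, (ii)$\Rw$(iii) and (iv)$\Rw$(i) from Proposition~\ref{AdjDense}, (iii)$\Rw$(i) from Proposition~\ref{CompCancDense}, and closes the loop by imitating the argument of Proposition~\ref{CharLeftAdFun}(iii)$\Rw$(i). The only cosmetic difference is that the paper states the last implication as (ii)$\Rw$(iv) rather than (i)$\Rw$(iv), but since the cycle (i)$\Leftrightarrow$(ii) is already in place and your computation uses exactly the same ingredients ($f^+\dashv f^{-1}$, the commuting square from inhabited-cocontinuity, and $\Sup^+_X\cdot\yoneda_X\cong 1_X$), the two are interchangeable.
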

\begin{proof}
The implication (i)$\Rw$(ii) we proved above, (ii)$\Rw$(iii) and (iv)$\Rw$(i) follow from Proposition \ref{AdjDense} and (iii)$\Rw$(i) from Proposition \ref{CompCancDense}. Finally, (ii)$\Rw$(iv) can be shown as (iii)$\Rw$(i) of Proposition \ref{CharLeftAdFun}.
\end{proof}
Finally, thanks to the considerations made above, also
\[
\xymatrix{R^+\ar@<0.5ex>[r]^{\pi_1^+}\ar@<-0.5ex>[r]_{\pi_2^+} & X^+\ar[r]^{q^+} & Q^+}
\]
is a split fork in $\CatSep{\Tth}$. Consequentely, with the same proof as in \ref{MonOverSet}, we conclude that the forgetful functor $\ICocomplSep{\Tth}\to\SET$ is monadic.
\begin{remark}
The results of this subsection suggest that in the future one should consider cocompleteness with respect to a class $\Phi$ of $\Tth$-modules, i.e.\ use \cite{KS_Colim}. Besides the classes considered in this paper, another reasonable choice is $\Phi$ being the class of all right adjoint $\Tth$-modules. In fact, this case is studied in \cite{CH_Compl,HT_LCls} where the $\Phi$-cocomplete $\Tth$-categories are called L-complete (resp.\ Cauchy-complete). Furthermore, it is easy to see that any $\Tth$-functor preserves colimits indexed by a right adjoint weight, so that the category of L-separated and $\Phi$-cocomplete $\Tth$-categories and $\Phi$-cocontinuous $\Tth$-functors is precisely the full subcategory $\CatCompl{\Tth}$ of L-complete and L-separated $\Tth$-categories of $\Cat{\Tth}$.
But be aware that, thought with the same techniques we obtain monadicity of $\CatCompl{\Tth}$ over $\CatSep{\Tth}$, the proof in \ref{MonOverSet} does not work here. The problem is that the $\Tth$-functor $q^{-1}:\hat{Q}\to\hat{X}$ does not restrict to $\tilde{Q}$ and $\tilde{X}$\footnote{Here $\tilde{X}=\{\psi\in\hat{X}\mid\text{ $\psi$ right adjoint}\}$.} since $q_*$ is in general not right adjoint. This is not a surprise, since, for instance, any ordered set is L-complete, hence the category of L-complete and L-separated ordered set coincides with the category of anti-symmetric ordered sets (and monotone maps). The canonical forgetful functor from this category to $\SET$ is surely not monadic. Furthermore, the canonical forgetful functor from the category of L-complete and L-separated topological spaces (= sober spaces) and continuous maps to $\SET$ is also not monadic. 
\end{remark}

\end{document}